\definecolor{maria}{HTML}{0090A0}
\begin{document}
\title[RB for Parabolic \ocp s]{A Certified Reduced Basis Method for Linear Parametrized Parabolic Optimal Control Problems in Space-Time Formulation}
\author{Maria Strazzullo$^*$, Francesco Ballarin$^*$ and Gianluigi Rozza$^*$}
\address{$^*$ mathLab, Mathematics Area, SISSA, via Bonomea 265, I-34136 Trieste, Italy}

\begin{abstract}
In this work, we propose to efficiently solve time dependent parametrized optimal control problems governed by parabolic partial differential equations through the certified reduced basis method. In particular, we will exploit an error estimator procedure, based on easy-to-compute quantities which guarantee a rigorous and efficient bound for the error of the involved variables. First of all, we propose the analysis of the problem at hand, proving its well-posedness thanks to Ne\v cas - Babu\v ska theory for distributed and boundary controls in a space-time formulation. Then, we derive error estimators to apply a Greedy method during the offline stage, in order to perform, during the online stage, a Galerkin projection onto a low-dimensional
space spanned by properly chosen high-fidelity solutions. We tested the error estimators on two model problems governed by a Graetz flow: a physical parametrized distributed optimal control problem and a boundary optimal control problem with physical and geometrical parameters. The results have been compared to a previously proposed bound, based on the exact computation of the Babu\v ska inf-sup constant, in terms of reliability and computational costs. We remark that our findings still hold in the steady setting and we propose a brief insight also for this simpler formulation.

\end{abstract}

\maketitle
\section{Introduction}
\label{intro}

In several applications, optimal control problems (OCP($\boldsymbol{\mu}$)s) governed by parametrized partial differential equations (PDE($\boldsymbol{\mu}$)s) can be a versatile tool to better model physical phenomena. A parameter $\boldsymbol{\mu} \in \mathcal P \subset \mathbb R^d$ can represent physical or geometrical configurations and \ocp s respond to the need of parametric studies of controlled systems, where the underlying PDE($\boldsymbol{\mu}$)s is steered to a desired state in order to achieve a specific goal. Even though on one hand optimal control is a great modelling tool, on the other \ocp s are challenging, not only to analyse theoretically but also to deal with in a numerical setting.
Indeed, even if they have been exploited in several research fields, from shape optimization, see e.g. \cite{delfour2011shapes,makinen,mohammadi2010applied}, to fluid dynamics, see e.g. \cite{dede2007optimal,negri2015reduced,optimal,de2007optimal}, from heamodynamics {\cite{Ballarin2017,LassilaManzoniQuarteroniRozza2013a,ZakiaMaria,Zakia}} to environmental applications \cite{quarteroni2005numerical,quarteroni2007reduced,Strazzullo1,Strazzullo3,ZakiaMaria},
classical discretization techniques may result in unbearable simulations, which can limit their applicability in many query contexts, where several parametric instances must be studied, possibly in a small amount of time. Furthermore, the computational complexity drastically grows when the governing equation involves time evolution.\\
Time optimization arises in many applications and it has been studied for several PDE($\bmu$)s, see e.g. \cite{HinzeStokes,Iapichino2,leugering2014trends,seymen2014distributed,Stoll1,Stoll}, due to its great potential in terms of a mathematical model. The goal of this work is to propose a reduced basis (RB) approach to deal with the study for several values of $\bmu \in \mathcal P$ in a low-dimensional framework reducing the computational costs of multiple simulations \cite{hesthaven2015certified,prud2002reliable,RozzaHuynhManzoni2013,RozzaHuynhPatera2008}.
Concerning the employment of reduced techniques to \ocp s, the interested reader may refer to several papers as \cite{bader2016certified,bader2015certified,dede2010reduced,gerner2012certified,Iapichino1,karcher2014certified,karcher2018certified,kunisch2008proper,negri2015reduced,negri2013reduced,quarteroni2007reduced, Strazzullo2}, where the reduction has been implemented through different methodology for a wide range of governing equations.\\
In particular, we seek to extend the approach presented in \cite{negri2013reduced} for steady \ocp s to quadratic optimization models constrained to linear time dependent PDE($\bmu$)s in a space-time framework. In \cite{negri2013reduced}, a bound for the combined error of state, adjoint and control variable is proposed. However, the strategy is based on an expensive computation of a lower bound to the Babu\v ska inf-sup constant of the optimality system. We remark that the computational costs in order to find the surrogate of the Babu\v ska inf-sup constant is
prohibitive even at the steady level. Our main effort is to avoid this issue, building a new global error estimator which can be efficiently evaluated during the basis construction for time dependent problems too. To the best of our knowledge, the main contributions and findings can be summarized as follows:
\begin{itemize}
\item[$\circ$] we exploit the space-time formulation of \cite{Strazzullo2}, to build a analytical framework suited to the goal of building an error estimator for \ocp s governed by parabolic problems. We propose an analysis of the well-posedness of the problem at the continuous and discrete level, dealing with both distributed and boundary controls.
\item[$\circ$] We build a new a posteriori lower bound for the Babu\v ska inf-sup constant based on quantities which can be inexpensive to compute. This allowed us to naturally extend the RB approach of \cite{negri2013reduced} also to time dependent problems, lightening the offline costs needed to build the reduced space.
\item[$\circ$] All the findings have been mirrored for the steady \ocp s, from the analytical to the numerical point of view, proposing our lower bound as an improvement to the approach followed in \cite{negri2013reduced}.
\end{itemize}
The RB method has been tested on two Graetz flow models, with physical and geometrical parametrization and distributed and boundary controls. Moreover, we compared the lower bound performances to the ones obtained with the employment of the Babu\v ska inf-sup constant, in terms of reliability and sharpness.
This work is outlined as follows. In Section \ref{gen_problem}, we introduced the theoretical space-time formulation for \ocp s proposed in \cite{Strazzullo2}. We adapt it to the structure proposed in \cite{Langer2020}, and generalizing the space-time techniques of \cite{urban2012new}, we proved the well-posedness of the optimality system at the continuous level. Section \ref{FEM} introduces the space-time discretized system. All the findings of Section \ref{gen_problem} have been recast to the finite-dimensional framework. Furthermore, we briefly described the algebraic system we dealt with, following the all-at-once strategy of \cite{HinzeStokes, Stoll1, Stoll}. RB procedure is presented in Section \ref{sec_ROM}. First, we briefly introduce the Greedy approach following \cite{hesthaven2015certified}, then we present the bound already exploited in \cite{negri2013reduced}. Finally, we propose a new lower bound for the Babu\v ska inf-sup constant. Section \ref{results} shows the numerical results for two test cases based on Graetz flows: a distributed \ocp $\,$ with physical parametrization and a boundary \ocp $\,$ with also geometrical parameters. Conclusions follow in Section \ref{conclusions}.

\section{Problem Formulation}
\label{gen_problem}
This Section aims at introducing linear quadratic \ocp s, governed by parabolic equations.  The main goal is to generalize and apply the space-time structure already presented for parabolic equations in \cite{urban2012new, yano2014space, yano2014space1} and distributed optimal control problems in \cite{HinzeStokes, HinzeNS, hinze2008optimization, Langer2020} to parametrized \ocp s governed by time dependent PDE($\bmu$)s with a general analysis of the well-posedness of the problem. First of all, we will focus our attention to time dependent problems, but then we will provide a well-posedness analysis also for steady \ocp s, since our fundings are still valid (with very few modifications) in the steady case.

\subsection{Time Dependent \ocp s: Problem Formulation}
\label{problem}
In this Section we introduce the continuous formulation of \ocp s governed by a parabolic state equation. We deal with a parametrized setting, where the parameter $\bmu \in \Cal P \subset \mathbb R^p$ could represent physical or geometrical features, with {$p \in \mathbb{N}$}.
Let {$\Omega \subset \mathbb R^n$, $n = 1, 2, 3$} be an open and bounded regular domain. The evolution of the system is studied in the time interval $[0,T]$ {for some $T > 0$}. Furthermore, we consider two separable Hilbert spaces $Y$ and $H$ {defined over $\Omega$, which} verify $Y \hookrightarrow H \hookrightarrow Y\dual$, and another possibly different Hilbert space $U$ over the \emph{control domain $\Omega_u \subset \overline \Omega$}.
Let  $\Cal U = L^2(0,T; U)$ be {the} \emph{control space}, while
$$
\Cal Y_0 :=
\displaystyle \Big \{
y \in L^2(0,T; Y) \;\; \text{s.t.} \;\;  \dt{y} \in L^2(0,T; Y\dual) \text{ such that } y(0) = 0
\Big \}
$$ is the \emph{state space}.
 {We endow $\Cal Y_0$ and $\Cal U$ with the following norms}, respectively:
\begin{align*}
\norm{y}_{\Cal Y_0}^2 & = \intTime{\norm{y}_Y^2} +  \intTime{\parnorm{\dt y}_{Y\dual}^2} \qquad \text{and} \qquad
 \norm{u}_{\Cal U}^2 = \intTime{\norm{u}_U^2}.
\end{align*}
Furthermore, let us define the space $\Cal Q := L^2(0,T; Y)$. The aim of an \ocp $\,$is to {steer} a PDE($\bmu$) solution to a desired observation $y_d(\bmu) \in L^2(0,T; Y_{\text{obs}})$, with $Y\subseteq Y_{\text{obs}}$. Furthermore, we also assume $Y \subseteq U$. These latter assumptions guarantee that there exists positive (possibly) parameter dependent\footnote{In the applications we will present in Section \ref{results}, these constant are {parameter dependent due to shape parametrization}.} constants $c_{\text{obs}}$ and $c_{u}$ such that
\begin{align}
\label{Y_in_Y_obs}
\norm{y}_{Y_{\text{obs}}} \leq c_{\text{obs}} \norm{y}_Y, \hspace{1cm} \forall y \in Y, \\
\label{Y_in_U}
\norm{y}_{U} \leq c_u\norm{y}_Y, \hspace{1cm} \forall y \in Y.
\end{align}
{In the following we assume that $Y, H$ and $U$ are contained in $L^2(\Omega)$. This is typically the case for the parabolic optimal control problem that we aim to tackle in this work}. The formulation of \ocp s reads as follows: for a given $\bmu \in \Cal P$ and forcing term
$f \in L^2(0,T; L^2({\Omega}))$, find the pair
$(y,u) :=(y(\bmu), u(\bmu)) \in \Cal Y_0 \times \Cal U$ which solves
\begin{equation}
\label{eq_functional}
\min_{(y,u) \in \Cal Y_0 \times \Cal U}
J((y,u); \bmu) = \frac{1}{2} \intTime{m(y - y_d(\bmu), y - y_d(\bmu); \bmu)}
+ \alf \intTime{n(u,u; \bmu)},
\end{equation}
governed by
\begin{equation}
\label{eq_time_strong}
\begin{cases}
\displaystyle {S(\bmu)} \dt {y} + {D}_a(\bmu)y = C(\bmu)u + {f(\bmu)} & \text{in } {\Omega} \times [0,T], \\
\displaystyle y = {g(\bmu)} & \text{on  } {\Gamma_D} \times [0,T], \\
 \displaystyle \dn{y} = 0 & \text{on  } {\Gamma_N} \times [0,T], \\
y(0) = 0 & \text{in } {\Omega}.
\end{cases}
\end{equation}
{Here,
\begin{itemize}
\item[\small $\circ$] $D_a(\bmu): Y \rightarrow Y\dual$ is a general differential state operator,
\item[\small $\circ$] $S(\bmu): Y\dual \rightarrow Y\dual$ is a function representing the time evolution,
\item[\small $\circ$]  $C(\bmu): U \rightarrow Y\dual$ is an operator describing the control action,
\item[\small $\circ$] $f(\bmu)$ denotes external sources,
\item[\small $\circ$] $\Gamma_D$ is the portion of the boundary $\partial \Omega$ where Dirichlet boundary conditions are applied, and $g(\bmu)$ represents Dirichlet data,
\item[\small $\circ$]$\Gamma_N$ is the portion of the boundary $\partial \Omega$ where Neumann boundary conditions are applied,
\item[\small $\circ$] $m(\cdot, \cdot; \bmu) \goesto {Y}{Y}{\mathbb R}$, associated to the operator $M(\bmu): Y \rightarrow Y\dual$, and $n(\cdot, \cdot; \bmu) \goesto {U}{U}{\mathbb R}$, associated to the operator $N(\bmu): U \rightarrow U\dual$, are two bilinear forms we will describe later in the work.
\end{itemize}
A notable case, which will be covered in the numerical test cases in Section \ref{results}, is the one in which $\bmu$ contains geometrical parameters: without loss of generality, in our presentation we assume to have already traced back the problem to the \emph{reference domain} $\Omega$, and that $S(\bmu)$, ${D}_a(\bmu)$, $C(\bmu)$, $M(\bmu)$, $N(\bmu)$, and $f(\bmu)$ encode suitable pulled back operators, see e.g.\ \cite{RozzaHuynhPatera2008}. We underline that in the following the control operator $C(\bmu)$ and $S(\bmu)$ will always be (the trace back of) the identity map. This is indeed a very common scenario, and does not limit the practical applicability of the resulting \ocp. As a consequence, $S(\bmu)$ and $C(\bmu)$ are self-adjoint and, in case of geometrical parametrization, of the form
\begin{equation}
\label{actual_forms}
\sum_{i}^{Q_S}c_S^i(\bmu)\chi_{\Omega^i_S} \quad \text{and}
\quad \sum_{i}^{Q_C}c_C^i(\bmu)\chi_{\Omega^i_C},
\end{equation}
respectively, for $Q_S, Q_C \in \mathbb N$ and $c_S^i(\bmu), c_C^i(\bmu)$ positive constants related to the trace back of indicator functions $\chi_{\Omega_S^i}, \chi_{\Omega_C^i}$ which verify
\begin{equation*}
\bigcup_{i}^{Q_S} \Omega_S^i = \Omega \quad \text{ and } \quad \bigcup_{i}^{Q_C} \Omega_C^i = \Omega_u.
\end{equation*} For the sake of generality, from now on we will always work with the formulation \eqref{actual_forms}.
Moreover,  we assume the following for the bilinear forms appearing in the functional \eqref{eq_functional}:
\begin{enumerate}
\item[(a)] \label{m_hyp} $m(\cdot, \cdot; \bmu) \goesto {Y}{Y}{\mathbb R}$ is a continuous with constant $c_m(\bmu)$, symmetric and positive semidefinite bilinear form, defined by (possibly tracing back) the $L^2$ scalar product over the \emph{observation domain} $\Omega_{\text{obs}} \subseteq \Omega$,
\item[(b)] \label{n_hyp} $n(\cdot, \cdot; \bmu) \goesto {U}{U}{\mathbb R}$ is (possibly the trace back of) the scalar product of $U$ restricted to $\Omega_u$ thus the action of $N(\bmu)$ is equivalent to the one of $C(\bmu)$.
\end{enumerate}
Finally, $0 < \alpha \leq 1$ is a fixed \emph{penalization parameter}. We remark that the role of $\alpha$ influences the value of the control variable $u$: the larger is $\alpha$, the more the control will weight in the functional \eqref{eq_functional} and the less will act on the system.}\\
 The problem at hand can be recast in weak formulation as follows: given $\bmu \in \Cal P$ find the pair $(y,u) \in {\Cal Y_0} \times \Cal U$ which verifies
\begin{equation}
\label{eq_time_weak}
\begin{cases}
\displaystyle \intTime{s(y, q; \bmu)}
+ \intTime {a (y, q; \bmu)} =
\intTime{c(u ,q; \bmu)}
+\intTime{ \la G(\bmu), q \ra_{Y\dual, Y}} & \forall q \in \Cal Q, \\
y(0) = 0  & \text{in } \Omega,
\end{cases}
\end{equation}
where $a \goesto{Y}{Y}{\mathbb R}$ and $c \goesto{U}{Y}{\mathbb R}$ are the bilinear forms associated to ${D}_a(\bmu)$ and $C(\bmu)$, respectively.
$ G(\bmu) \in Y\dual $ is a continuous functional including forcing and boundary terms deriving from the weak state equation and
\begin{equation}
\label{time}
s(y, q; \bmu) = \left \la {S(\bmu)} \dt{y}, q\right \ra_{Y\dual Y}
\end{equation}
Furthermore, we make two other assumptions on the problem structure, i.e.
\begin{enumerate}
\item[(c)] \label{coercivity_a} $a(\cdot, \cdot; \bmu)$ is continuous and coercive of constants $c_a(\bmu)$ and $\gamma_a(\bmu)$, respectively,
\item[(d)] \label{continuity_of_c} $c(\cdot, \cdot; \bmu)$ is continuous of constant $c_c(\bmu)$.
\end{enumerate}
We remark that hypotheses (c) and (d) ensure the existence of a unique $y \in \Cal Y_0$, solution {to} \eqref{eq_time_weak}, for a given $u \in \Cal U$ and $\bmu \in \Cal P$. \\
 The weak \ocp $\;$ has the following form:  given $\bmu \in \Cal P$, find the pair $(y,u) \in \Cal Y_0 \times \Cal U$ which satisfies
\begin{equation}
\label{general_problem}
\min_{(y, u) \in \Cal Y_0 \times \Cal U} J((y, u); \bmu) \spazio \text{such that \eqref{eq_time_weak} holds}.
\end{equation}
The proposed problem can be solved through a Lagrangian approach. {To this end}, we define an \emph{adjoint variable}
$p :=p(\bmu) \in \Cal Y_T$, where
$$
\Cal Y_T = \displaystyle \Big \{
p \in L^2(0,T; Y) \;\; \text{s.t.} \;\;  \dt{p} \in L^2(0,T; Y\dual) \text{ such that } p(T) = 0
\Big \},
$$
is the \emph{adjoint space}, endowed with the same norm of $\Cal Y_0$. For the sake of clarity, we underline that we have chosen to work with the Hilbert space $Y$ also for the adjoint variable, rather then another Hilbert space, say $P$. The assumption $P \equiv Y$ is needed in order to guarantee the well-posedness of the problem at hand.
In order to solve the minimization problem
\eqref{general_problem} we build the following Lagrangian functional:
\begin{equation}
\label{functional}
\Lg (y,u,p; \bmu) = J((y, u); \bmu)
+ \displaystyle \intTime{s(y, p; \bmu)}
+ \intTime {a (y, p; \bmu)} - \intTime{c(u,p; \bmu)}
- \intTime{ \la G(\bmu), p \ra_{Y\dual, Y}}.
\end{equation}
In order to find the optimal pair $(y,u)$, we differentiate with respect to state, control and adjoint variables, obtaining the following optimization system:
 given $\bmu \in \Cal P$, find $(y, u, p) \in \Cal Y_0 \times \Cal U \times \Cal Y_T$
\begin{equation}
\label{optimality_system}
\begin{cases}
D_y\Lg(y, u, p; \bmu)[z] = 0 & \forall z \in \Cal Q,\\
D_u\Lg(y, u, p; \bmu)[v] = 0 & \forall v \in \Cal U,\\
D_p\Lg(y, u, p; \bmu) [q]= 0 & \forall q \in \Cal Q.\\
\end{cases}
\end{equation}
In the end, the optimality system
\eqref{optimality_system}, in strong formulation, reads: for a given $\bmu$ find $(y,u,p) \in \Cal Y_0 \times \Cal U \times \Cal Y_T$ such that
\begin{equation}
\label{strong_form_optimality_system}
\begin{cases}
\displaystyle M(\bmu)y \chi_{\Omega_{\text{obs}}} - S(\bmu)\dt{p} + D_a(\bmu)\dual p =
M(\bmu)y_d & \text{ in } \Omega \times [0,T], \\
\alpha N(\bmu)u -  C(\bmu)p\chi_{\Omega_u} = 0 & \text{ in } \Omega \times [0,T], \\
\displaystyle S(\bmu) \dt{y} + D_a (\bmu)y -C(\bmu) u = f (\bmu)& \text{ in } \Omega \times [0,T], \\
y(0) = y_0  & \text{ in } \Omega , \\
p(T) = 0 & \text{ in } \Omega, \\
\text{boundary conditions} & \text{ on $\partial{\Omega} \times [0,T],$}
\end{cases}
\end{equation}
where $D_a(\bmu)^\ast$ is the dual operator associated to $D_a(\bmu)$, while $\chi_{\Omega_{\text{obs}}}$ and $\chi_{\Omega_u}$ are the indicator functions of the observation domain $\Omega_{\text{obs}}$ and the control domain $\Omega_u$, respectively. The first equation of system \eqref{strong_form_optimality_system} is known as \emph{adjoint equation}, the second one as \emph{optimality equation} and the third one as \emph{state equation}. Exploiting the \emph{optimality equation} 
\begin{equation}
\label{gradient_eq}
\alpha N(\bmu)u - C(\bmu)p\chi_{\Omega_u} = 0 \text{ in } \Omega \times [0,T],
\end{equation}
and thanks to the assumption that $C(\bmu)$ and  $N(\bmu)$ both represent (the possible trace back) $L^2$ scalar product over the the control domain,
 we can recast \eqref{strong_form_optimality_system} as: given $\bmu \in \Cal P$, find the pair $(y,p) \in \Cal Y_0 \times \Cal Y_T$ such that the following system is verified
\begin{equation}
\label{no_u}
\begin{cases}
\displaystyle M(\bmu)y \chi_{\Omega_{\text{obs}}}  - S(\bmu) \dt{p} + D_a(\bmu)\dual p =
 M(\bmu)y_d & \text{ in } \Omega \times [0,T], \\
\displaystyle  S(\bmu)\dt{y} +  D_a (\bmu)y - \frac{1}{\alpha}C(\bmu)p\chi_{\Omega_u} =  f & \text{ in } \Omega \times [0,T], \\
y(0) = y_0  & \text{ in } \Omega , \\
p(T) = 0 & \text{ in } \Omega, \\
\text{boundary conditions} & \text{ on $\partial{\Omega} \times [0,T].$}
\end{cases}
\end{equation}

We will refer to \eqref{no_u}, as \emph{no-control} framework (because the control variable is eliminated from the optimality system), see e.g \cite{Langer2020}, opposed to classical optimality system used, for example, in \cite{karcher2014certified,karcher2018certified,negri2015reduced,negri2013reduced,quarteroni2007reduced, Strazzullo2}\footnote{We remind that the control variable can be recovered in post-processing thanks to relation \eqref{gradient_eq}.}.\\Equivalently, the proposed system \eqref{no_u} in a mixed variational formulation reads: given $\bmu \in \Cal P$, find the pair $(y,p) \in \Cal Y_0 \times \Cal Y_T$ such that
\begin{equation}
\label{global}
\Cal B((y, p), (z, q); \bmu) =\big \la \Cal F(\bmu), (z, q)\big \ra \quad \forall (z, q) \in \Cal Q \times \Cal Q.
\end{equation}
with
\begin{align}
\label{cal_B}
\Cal B((y, p), (z, q); \bmu) = \intTime{ s(y, z; \bmu)}
+ & \intTime{a(y, z; \bmu)} - \frac{1}{\alpha}\intTime{c(p, z; \bmu)}  \\ \nonumber
& + \intTime{m(y, q; \bmu)} - \intTime{s(p, q; \bmu)}+ \intTime{a(q, p; \bmu)}.
\end{align}
and
\begin{equation}
\big \la \Cal F(\bmu), (z, q) \big \ra = \intTime{m(y_d, q; \bmu)} + \intTime{\big \la G(\bmu) , z \big \ra}.
\end{equation}
In order to prove  the well-posedness of \eqref{global}, we want to exploit the Ne\v{c}as-Babu\v{s}ka theorem \cite{necas}. It is clear that, for a given $\bmu \in \Cal P$ and $y_d \in L^2(0,T; Y_{\text{obs}})$, $\Cal F(\bmu) \in (\Cal Q \times \Cal Q)\dual$ is a linear continuous functional and thanks to assumptions (a), (c), (d) and definition \eqref{time}, the bilinear form \eqref{cal_B} is continuous, indeed there exists a positive constant $c_{\Cal B}(\bmu)$ such that:
\begin{equation}
\label{continuity_B}
\Cal B((y,p), (z,q); \bmu) \leq c_{\Cal B} (\bmu) \sqrt{\norm{y}_{\Cal Y_0}^2 + \norm{p}_{\Cal Y_T}^2}\sqrt{\norm{z}_{\Cal Q}^2 + \norm{q}_{\Cal Q}^2}.
\end{equation}
{In order to recover} the hypotheses of the Ne\v{c}as-Babu\v{s}ka theorem, we will present two lemmas on the injectivity and surjectivity properties of \eqref{cal_B}. The first one proves the surjectivity of ajoint of the bilinear form \eqref{cal_B}. {The proof combines ideas from \cite[Proposition 2.2]{urban2012new} for the parabolic state equations and \cite{Langer2020} for distributed \ocp s}.
\begin{lemma}[Surjectivity of $\Cal B \dual$] The bilinear form \eqref{cal_B} {satisfies} the following inf-sup stability condition: {there exists $\beta(\bmu) > 0$ such that}
\label{lemma_surj}
\begin{equation}
\label{surj}
\beta_{\Cal B}(\bmu){\;\vcentcolon=\;} \inf_{(y,p)  \in (\Cal Y_0 \times \Cal Y_T)\setminus \{(0,0)\}}\sup_{(z,q)  \in (\Cal Q \times \Cal Q)\setminus \{(0,0)\}} \frac{\Cal B((y,p), (z, q); \bmu)}
{ \sqrt{\norm{y}_{\Cal Y_0}^2 + \norm{p}_{\Cal Y_T}^2}\sqrt{\norm{z}_{\Cal Q}^2 + \norm{q}_{\Cal Q}^2}} \geq \beta(\bmu).
\end{equation}
\end{lemma}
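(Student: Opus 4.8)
The plan is to recast the inner supremum as the dual norm of a residual and then bound that residual from below, using the parabolic stability of the state and adjoint operators together with the sign of the observation and control terms. For fixed $\bmu$ I would introduce the operators $\Cal A, \Cal A\dual, \Cal C, \Cal M$ with values in $\Cal Q\dual$, defined by $\la \Cal A y, z\ra = \intTime{s(y,z;\bmu)} + \intTime{a(y,z;\bmu)}$, $\la \Cal A\dual p, q\ra = -\intTime{s(p,q;\bmu)} + \intTime{a(q,p;\bmu)}$, $\la\Cal C p, z\ra = \frac1\alpha\intTime{c(p,z;\bmu)}$ and $\la\Cal M y, q\ra = \intTime{m(y,q;\bmu)}$, so that $\Cal B((y,p),(z,q);\bmu) = \la r_1, z\ra + \la r_2, q\ra$ with $r_1 := \Cal A y - \Cal C p$ and $r_2 := \Cal A\dual p + \Cal M y$. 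Since $\Cal Q \times \Cal Q$ carries the product norm, the inner supremum equals $\sqrt{\norm{r_1}_{\Cal Q\dual}^2 + \norm{r_2}_{\Cal Q\dual}^2}$, and it is enough to prove $\norm{r_1}_{\Cal Q\dual}^2 + \norm{r_2}_{\Cal Q\dual}^2 \geq \beta(\bmu)^2(\norm{y}_{\Cal Y_0}^2 + \norm{p}_{\Cal Y_T}^2)$ for all $(y,p)$.

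First I would establish the parabolic stability $\norm{\Cal A y}_{\Cal Q\dual} \geq \beta_a(\bmu)\norm{y}_{\Cal Y_0}$ following Proposition 2.2 of \cite{urban2012new}: given $y$, test against $R_Y^{-1}\dt{y} + \delta y$, where $R_Y : Y \to Y\dual$ is the Riesz isomorphism and $\delta$ is chosen large enough; the time term contributes $\intTime{\la\dt{y}, y\ra} = \half\norm{y(T)}_H^2 \geq 0$ because $y(0)=0$, coercivity of $a$ (assumption (c)) controls the spatial part, and a Young inequality absorbs the remaining cross term, so that both $\norm{y}_{\LL{Y}}$ and $\parnorm{\dt{y}}_{\LL{Y\dual}}$ are recovered. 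The same argument applied after the time reversal $t \mapsto T-t$, which identifies $\Cal Y_T$ with $\Cal Y_0$ and replaces $a$ by its adjoint form (coercive with the same constants), gives $\norm{\Cal A\dual p}_{\Cal Q\dual} \geq \beta_a(\bmu)\norm{p}_{\Cal Y_T}$.

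The coupling is then controlled by exploiting the signs, in the spirit of \cite{Langer2020}. Pairing $r_1$ with $p$, $r_2$ with $y$ and subtracting, the time terms cancel by integration by parts, $\intTime{s(y,p;\bmu)} + \intTime{s(p,y;\bmu)} = \la y(T), p(T)\ra_H - \la y(0), p(0)\ra_H = 0$, which leaves the identity $\frac1\alpha\intTime{c(p,p;\bmu)} + \intTime{m(y,y;\bmu)} = \la r_2, y\ra - \la r_1, p\ra$. Hence the control and observation energies $E_c := \frac1\alpha\intTime{c(p,p;\bmu)}$ and $E_m := \intTime{m(y,y;\bmu)}$ satisfy $E_c + E_m \leq \norm{r_2}_{\Cal Q\dual}\norm{y}_{\Cal Y_0} + \norm{r_1}_{\Cal Q\dual}\norm{p}_{\Cal Y_T}$. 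Because $c$ and $m$ are (pull-backs of) $L^2$ products over $\Omega_u$ and $\Omega_{\text{obs}}$ (assumptions (a)--(b)) and $Y \hookrightarrow L^2(\Omega)$, the coupling operators obey $\norm{\Cal C p}_{\Cal Q\dual} \lesssim \alpha^{-1/2}\sqrt{E_c}$ and $\norm{\Cal M y}_{\Cal Q\dual} \lesssim \sqrt{E_m}$. Writing $\Cal A y = r_1 + \Cal C p$ and $\Cal A\dual p = r_2 - \Cal M y$ and inserting the stability bounds of the previous step yields $\norm{y}_{\Cal Y_0} \lesssim \norm{r_1}_{\Cal Q\dual} + \alpha^{-1/2}\sqrt{E_c}$ and $\norm{p}_{\Cal Y_T} \lesssim \norm{r_2}_{\Cal Q\dual} + \sqrt{E_m}$; substituting these into the energy bound and using Young's inequality absorbs $E_c, E_m$, so that $E_c + E_m \lesssim \norm{r_1}_{\Cal Q\dual}^2 + \norm{r_2}_{\Cal Q\dual}^2$, and feeding this back gives $\norm{y}_{\Cal Y_0}^2 + \norm{p}_{\Cal Y_T}^2 \lesssim \norm{r_1}_{\Cal Q\dual}^2 + \norm{r_2}_{\Cal Q\dual}^2$, i.e.\ the claim with an explicit $\beta(\bmu) > 0$.

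The hard part is the coupling term $\frac1\alpha c(p,z;\bmu)$: since $\alpha$ may be small it is $O(\alpha^{-1})$, so any direct coercivity or single-supremizer argument fails, the $O(\alpha^{-1})$ cross term $\norm{y}_{\Cal Y_0}\norm{p}_{\Cal Y_T}$ overwhelming the coercive diagonal. The sign-definite energy identity is exactly what rescues the estimate, at the price of a constant $\beta(\bmu)$ that degrades as $\alpha \to 0$ but stays strictly positive for each fixed $\bmu$ and $\alpha > 0$. I also expect the refined bound $\norm{\Cal C p}_{\Cal Q\dual} \lesssim \alpha^{-1/2}\sqrt{E_c}$, rather than the cruder $\alpha^{-1}\norm{p}_{\Cal Y_T}$, to be essential for the absorption step to close.
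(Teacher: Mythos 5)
Your proposal is correct in substance but takes a genuinely different route from the paper. The paper proves \eqref{surj} by exhibiting explicit supremizers: given $(y,p)$ it tests against $(\alpha c_p y + c_{z_y} z_y,\, c_p p + q_p)$ with $z_y = (D_a(\bmu)\dual)^{-1}\dt{y}$, $q_p = -D_a(\bmu)^{-1}\dt{p}$, and it must split into two cases: when $\Omega_u = \Omega_{\text{obs}}$ the coupling terms cancel pointwise because $m(y,p;\bmu) = c(p,y;\bmu)$, and when $\Omega_u \neq \Omega_{\text{obs}}$ an auxiliary corrector $\kappa$ solving the parabolic problem \eqref{auxiliary_kappa} (constrained to vanish on $\Omega_{\text{obs}}$) is added to the adjoint test function to neutralize the mismatched $m$- and $c$-terms. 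You instead (i) identify the inner supremum with $\sqrt{\norm{r_1}_{\Cal Q\dual}^2 + \norm{r_2}_{\Cal Q\dual}^2}$, (ii) prove space-time stability of the forward and (time-reversed) backward operators separately, and (iii) control the coupling through the skew-structure identity $E_c + E_m = \la r_2, y\ra - \la r_1, p\ra$, whose time terms vanish because $y(0)=0$ and $p(T)=0$ (note the boundary term is $\left[\intSpace{S(\bmu)\, y\, p}\right]_0^T$, carrying the $S$-weight, not the plain $H$ pairing you wrote). That identity — positive semidefiniteness of the $m$- and $c$-energies — is precisely what the paper exploits implicitly, via cancellation in Case 1 and via $\kappa$ in Case 2; your version uses it uniformly, with no case distinction on $\Omega_u$ versus $\Omega_{\text{obs}}$ and no auxiliary problem, which is arguably cleaner. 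What the paper's route buys is a fully explicit constant $\beta(\bmu)$ in terms of $\gamma_a(\bmu), c_a(\bmu), c_c(\bmu), c_m(\bmu), c_S(\bmu), c_u, \beta_a(\bmu)$ and $\alpha$, which later feeds the computable lower bound of Theorem \ref{th_lb}; your constants stay hidden behind $\lesssim$, though tracking them through the absorption step would give a $\beta(\bmu)$ degrading polynomially in $\alpha^{-1}$, consistent with the paper's. Your refinement $\norm{\Cal C p}_{\Cal Q\dual} \lesssim \alpha^{-1/2}\sqrt{E_c}$ is correct and indeed indispensable (it plays the role of the paper's careful tuning of $\eta_1, \eta_2, c_{z_y}, c_p$), and your absorption argument closes as claimed. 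Two minor caveats: the stability constant you obtain for $\Cal A$ via the $R_Y^{-1}\dt{y} + \delta y$ supremizer is not literally $\beta_a(\bmu)$ but a constant built from $c_S(\bmu), \gamma_a(\bmu), c_a(\bmu)$; and both your sketch and the paper silently commute the piecewise-constant multiplier $S(\bmu)$ with the Riesz map, respectively with $D_a(\bmu)$ (cf.\ \eqref{s_z_y}--\eqref{s_q_p}), a harmless looseness you share with the source rather than a gap of your own.
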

\begin{proof}
Let us consider $0 \neq (y, p) \in \Cal Y_0 \times \Cal Y_T$ and let us define
\begin{equation}
z_y =  (D_a(\bmu)\dual)^{-1} \dt{y} \quad \text{and} \quad q_p =  - D_a(\bmu)^{-1} \dt{p}.
\end{equation}
\underline{Case 1}. We focus {first} on the case $\Omega_u = \Omega_{\text{obs}}$. \\
In this context,  $c(\cdot, \cdot, \bmu) \equiv m(\cdot, \cdot; \bmu)$, thanks to their definition as the $L^2$ scalar product over control and observation domain, respectively. Indeed, if the two domains coincide, the two $L^2$ products will and, thus, the two bilinear forms will act in the same way. \\
Furthermore, it is clear that $(\alpha c_py + c_{z_y}z_y, c_{p}p + q_p) \in \Cal Q \times \Cal Q$, {where the positive constants $c_{z_y}$ and $c_p$ will be determined afterwards}. Thus, we can state the following:
\begin{align*}
\sup_{(z,q)\in (\Cal Q \times \Cal Q)\setminus \{(0,0)\}} \Cal B((y,p), (z, q); \bmu)&  \geq \Cal B((y,p), (\alpha c_p y + c_{z_y}z_y, c_p p + q_p); \bmu) \\ \nonumber
& \geq \alf c_p c_S(\bmu)\norm{y(T)}_{H}^2 + \alpha c_p \gamma_a(\bmu) \norm{y}_{\Cal Q}^2 - c_p\intTime{c(p, y;  \bmu)}\\ \nonumber
& \quad + c_{z_y}s(y, z_y; \bmu) + c_{z_y}\intTime{a(y, z_y; \bmu)} - \frac{c_{z_y}}{\alpha}\intTime{c(p, z_y;  \bmu)} \\ \nonumber
&  \quad \qquad \quad + c_p \intTime{m(y, p; \bmu)} + \frac{c_S(\bmu)}{2}\norm{p(0)}_H^2 + c_p \gamma_a(\bmu)\norm{p}^2_{\Cal Q}  \\ \nonumber
& \quad \quad \quad \quad    + \intTime{m(y, q_p;  \bmu)} - s(p, q_p;  \bmu) + \intTime{a(q_p, p;  \bmu)},
\end{align*}
where we have exploited the coercivity of $a(\cdot, \cdot; \bmu)$ and the relation
\begin{align}
\label{s_T}
\intTime{s(w, w; \bmu)} = \half
\intTime{\sum_i^{Q_S}c_S^i(\bmu)\chi_{\Omega_S^i}\dt{\norm{w(t)}^2_{H}}}& \geq \underbrace{\min_i \{c_s^i(\bmu) \}}_{c_S(\bmu)}\left ( \half \norm{w(T)}_{H}^2 - \half  \norm{w(0)}_{H}^2 \right ) & \\ \nonumber
& \geq  c_S(\bmu)\half \norm{w(T)}^2_H
& w \in \Cal Y_0,
\end{align}
which reads, analogously,  $\displaystyle - s(w, w; \bmu) \geq c_S(\bmu) \half \norm{w(0)}$ for $w\in \Cal Y_T$. Furthermore, we can observe that
\begin{equation}
\label{a_z_y}
a(y, z_y; \bmu) = \Big \la D_a(\bmu)y, (D_a(\bmu)\dual)^{-1}\dt{y} \Big \ra_{Y\dual Y} = \Big \la y, \dt{y} \Big \ra_{Y\dual Y} = \half \dt{\norm{y(t)}^2_{H}},
\end{equation}
and
\begin{equation}
\label{a_q_p}
a(q_p, p; \bmu) = - \Big \la D_a(\bmu)(D_a(\bmu))^{-1}\dt{p}, p \Big \ra_{Y\dual Y} = - \Big \la \dt{p}, p \Big \ra_{Y\dual Y} =- \half \dt{\norm{p(t)}^2_{H}},
\end{equation}
which result in {non negative} quantities, exploiting the same argument of \eqref{s_T}.
Furthermore, we recall that $m(y, p; \bmu) = c(	p, y; \bmu)$ {because} observation and control domains coincide.
{Exploiting the inequalities}
\begin{equation}
\label{s_z_y}
s(y, z_y; \bmu) = \sum_i^{Q_S}c_S^i(\bmu)\chi_{\Omega_{S}^i} a(z_y, z_y; \bmu) \geq c_S(\bmu) \gamma_a(\bmu)\norm{z_y}^2_Y,
\end{equation}
and
\begin{equation}
\label{s_q_p}
-s(p, q_p; \bmu) = \sum_i^{Q_S}c_S^i(\bmu)\chi_{\Omega_{S}^i} a(q_p,q_p; \bmu) \geq c_S(\bmu) \gamma_a(\bmu)\norm{q_p}^2_Y,
\end{equation}
the Young's inequality and the continuity assumption (d), we can state that
\begin{align*}
\sup_{(z,q)\in (\Cal Q \times \Cal Q)\setminus \{(0,0)\}} \Cal B((y,p), (z, q); \bmu)&  \geq \alpha c_p\gamma_a(\bmu)\norm{y}_{\Cal Q}^2
+ c_{z_y}c_S(\bmu)\gamma_a(\bmu)\norm{z_y}^2_{\Cal Q}- \frac{c_c(\bmu)c_{z_y}}{\alpha}\intTime{\norm{p}_U \norm{z_y}_Y} \\ \nonumber
&  \quad \quad     + c_p \gamma_a(\bmu)\norm{p}_{\Cal Q}^2 - c_m(\bmu)\intTime{\norm{y}_{Y}\norm{q_p}_{Y}} + c_S(\bmu) \gamma_a(\bmu)\norm{q_p}_{\Cal Q}^2 \\
&\geq  \frac{\alpha c_p\gamma_a(\bmu)}{2}\norm{y}_{\Cal Q}^2+ \Big( \frac{\alpha c_p\gamma_a(\bmu)}{2} - \frac{c_m(\bmu)}{2\eta_2}\Big )\norm{y}_{\Cal Q}^2 \\
& \quad \quad +
 \frac{c_{z_y}c_S(\bmu)\gamma_a(\bmu)}{2}\norm{z_y}_{\Cal Q}^2+ \Big( \frac{c_{z_y}c_S(\bmu)\gamma_a(\bmu)}{2} - \frac{c_c(\bmu)c_{z_y}\eta_1}{2\alpha}\Big )\norm{z_y}_{\Cal Q}^2
 \\
& \quad \quad \quad \quad +
\frac{c_{p}\gamma_a(\bmu)}{2}\norm{p}_{\Cal Q}^2+ \Big( \frac{c_{p}\gamma_a(\bmu)}{2} - \frac{c_c(\bmu)c_{z_y}c_u(\bmu)}{2\alpha\eta_1}\Big )\norm{p}_{\Cal Q}^2\\
& \quad \quad \quad \quad  \quad \quad +
\frac{c_S(\bmu)\gamma_a(\bmu)}{2}\norm{q_p}_{\Cal Q}^2+ \Big( \frac{c_S(\bmu)\gamma_a(\bmu)}{2} - \frac{c_m(\bmu)\eta_2}{2}\Big )\norm{q_p}_{\Cal Q}^2,
\end{align*}
for some positive $\eta_1$ and $\eta_2$, and $c_u(\bmu)$ is the constant of \eqref{Y_in_U}. Choosing
\begin{equation*}
\eta_1 = \frac{\alpha c_S(\bmu)\gamma_a(\bmu)}{c_c(\bmu)}, \quad  c_{z_y} = \frac{c_p c_S(\bmu) \gamma_a(\bmu)^2 \alpha^2}{c_c(\bmu)^2c_u(\bmu)}, \quad \eta_2 = \frac{\gamma_a(\bmu)c_S(\bmu)}{c_m(\bmu)}, \; \text{ and } \; c_p = \frac{c_m(\bmu)^2}{c_S(\bmu)\gamma_a(\bmu)^2 \alpha},
\end{equation*}
it holds
\begin{align*}
\sup_{(z,q)\in(\Cal Q \times \Cal Q)\setminus \{(0,0)\}} \Cal B((y,p), (z, q); \bmu) & \geq  \frac{ c_m(\bmu)^2}{2c_S(\bmu)\gamma_a(\bmu)}\norm{y}_{\Cal Q}^2+
 \frac{c_m(\bmu)^2 c_S(\bmu)\gamma_a(\bmu)\alpha}{2c_c(\bmu)^2c_u(\bmu)}\norm{z_y}_{\Cal Q}^2\\
& \quad \quad + \frac{c_{m}(\bmu)^2}{2 c_S(\bmu)\gamma_a(\bmu)\alpha}\norm{p}_{\Cal Q}^2 +
\frac{c_S(\bmu)\gamma_a(\bmu)}{2}\norm{q_p}_{\Cal Q}^2. \\
\end{align*}
We now use that
\begin{equation}
\label{D_continuity}
\displaystyle \parnorm{\dt{y}}_{Y \dual} = \norm{ D_a(\bmu)\dual z_y} \leq c_a(\bmu)\norm{z_y}_Y
\quad \text{ and } \quad
\displaystyle \parnorm{\dt{p}}_{Y \dual} = \norm{ - D_a(\bmu)q_p} \leq c_a(\bmu)\norm{q_p}_Y,
\end{equation}
thus, 
\begin{align*}
\sup_{(z,q)\in(\Cal Q \times \Cal Q)\setminus \{(0,0)\}} \Cal B((y,p), (z, q); \bmu)& \geq
\min \Big \{
\frac{ c_m(\bmu)^2}{2c_S(\bmu)\gamma_a(\bmu)},
 \frac{c_m(\bmu)^2 c_S(\bmu)\gamma_a(\bmu)\alpha}{2c_c(\bmu)^2c_u(\bmu)c_a(\bmu)^2}, \frac{c_S(\bmu)\gamma_a(\bmu)}{2c_a(\bmu)^2}
\Big \}
(\norm{y}_{\Cal Y_0}^2 +
\norm{p}_{\Cal Y_T}^2). \\
\end{align*}
Taking into account the denominator of \eqref{surj} and defining
\begin{equation}
\beta_a(\bmu) :=  \inf_{\phi\in Y\setminus \{0\}} \sup_{\psi \in Y \setminus \{0\}} \frac{a(\psi, \phi; \bmu)}{\norm{\phi}_Y\norm{\psi}_Y},
\end{equation}
we have, being $\alpha \leq 1$,
$$
\begin{aligned}
\sqrt{\norm{\alpha c_py + c_{z_y}z_y}_{\Cal Q}^2 + \norm{c_{p}p + q_p}_{\Cal Q}^2}
& \leq \sqrt{2(\alpha^2 c_p^2 \norm{y}_{\Cal Q}^2 + c_{z_y}^2 \norm{z_y}_{\Cal Q}^2
+ c_p^2 \norm{p}_{\Cal Q}^2 + \norm{q_p}_{\Cal Q}^2)}\\ \nonumber
& \leq \sqrt{
\begin{aligned}
2\Big (\alpha^2 c_p^2 \norm{y}_{\Cal Q}^2 & + \frac{c_{z_y}^2}{\beta_a(\bmu)^2} \intTime{\parnorm{\dt{y}}_{Y\dual}^2}
\\ & \qquad+ c_p^2 \norm{p}_{\Cal Q}^2 + \frac{1}{\beta_a(\bmu)^2}\intTime{\parnorm{\dt{p}}_{Y\dual}^2\Big )}
\end{aligned}
}\\
& \leq \sqrt{2\max \Big \{ c_p^2, \frac{c_{z_y}^2}{\beta_a(\bmu)^2} , \frac{1}{\beta_a(\bmu)^2} \Big \}
(\norm{y}_{\Cal Y_0}^2+ \norm{p}_{\Cal Y_T}^2)}.
\end{aligned}
$$
Finally, since the proposed estimates do not depend on the choice of $(y,p) \in \Cal Y_0 \times \Cal Y_T$, relation \eqref{surj} holds with
\begin{equation*}
\label{surj_caso_1}
\beta(\bmu) = \frac{\min \Big \{
\frac{ c_m(\bmu)^2}{2 c_S(\bmu)\gamma_a(\bmu)},
 \frac{c_m(\bmu)^2 c_S(\bmu)\gamma_a(\bmu)\alpha}{2c_c(\bmu)^2 c_u(\bmu)c_a(\bmu)^2}, \frac{c_S(\bmu)\gamma_a(\bmu)}{2c_a(\bmu)^2}
\Big \}}{\sqrt{2\max \Big \{ c_p^2, \frac{c_{z_y}^2}{\beta_a(\bmu)^2} , \frac{1}{\beta_a(\bmu)^2} \Big \} }}.
\end{equation*}
\underline{Case 2}. We work now with the assumption $\Omega_u \neq \Omega_{\text{obs}}$. We assume at least one of the two between the control and the observation domain is not $\Omega$, say $\Omega_{\text{obs}}$, since they must be different\footnote{\label{foot_1}The choice has been driven by the problem at hand in Section \ref{results}. The generalization to $\Omega_{\text{obs}} = \Omega$ and $\Omega_u \neq \Omega$ is postponed to Remark \ref{remark_omega_u}.}. We will show that also in this case inequality \eqref{surj} holds. {Towards this goal}, we define $\kappa:= \kappa(\bmu) \in \mathcal Y_0$ solution of the following auxiliary problem for a given $\bmu \in \Cal P$, a positive constant $c_y$ and a given $y \in \Cal Q$:
\begin{equation}
\label{auxiliary_kappa}
\begin{cases}
\displaystyle \intTime{s(\kappa, r; \bmu)} + \intTime{a(\kappa, r; \bmu)} = - \intTime{m(y,r; \bmu)}
+ \frac{c_y}{\alpha}\intTime{c(r, y; \bmu)} &  \forall r \in \Cal Q, \\
\kappa(0) = 0 & \text{in } \Omega, \\
\kappa \equiv 0 & \text{in } \Omega_{\text{obs}}.
\end{cases}
\end{equation}
We notice that the parabolic problem is well posed thanks to the continuity properties of $m(y, r; \bmu)$ and $c(r, y; \bmu)$ and the preserved continuity and coercivity of $a(\cdot, \cdot; \bmu)$ in $\Omega \setminus \Omega_{\text{obs}}$. Let us consider the element $(c_y y + c_{z_y}z_y, p + q_p + \kappa) \in \Cal Q \times\Cal Q$, where $c_y$, and $c_{z_y}$ are, once again, two positive constants to be determined. Thus, it holds
\begin{align*}
\sup_{(z,q)\in (\Cal Q \times \Cal Q) \setminus \{(0,0)\}} \Cal B((y,p), (z, q); \bmu)&  \geq \Cal B((y,p), (c_y y + c_{z_y}z_y, p + q_p + \kappa); \bmu) \\ \nonumber
& \geq c_S(\bmu) \frac{c_y}{2}\norm{y(T)}_{H}^2 + c_y\gamma_a(\bmu) \norm{y}_{\Cal Q}^2 - \frac{c_y}{\alpha}\intTime{c(p, y;  \bmu)}\\ \nonumber
& \qquad + c_{z_y}s(y, z_y; \bmu) + c_{z_y}\intTime{a(y, z_y; \bmu)} - \frac{c_{z_y}}{\alpha}\intTime{c(p, z_y;  \bmu)} \\ \nonumber
&  \qquad \quad + \intTime{m(y, p; \bmu)} + \frac{c_S(\bmu)}{2}\norm{p(0)}_{H}^2 + \gamma_a(\bmu)\norm{p}^2_{\Cal Q}  \\ \nonumber
& \qquad \quad \quad   + \intTime{m(y, q_p;  \bmu)} - \intTime{s(p, q_p;  \bmu)} + \intTime{a(q_p, p;  \bmu)} \\ \nonumber
& \qquad \qquad \quad  \quad    + \intTime{m(y, \kappa;  \bmu)} - \intTime{s(p, \kappa;  \bmu) }+ \intTime{a(\kappa, p;  \bmu)} \\
\end{align*}
{Thanks to the definition of $\kappa$ in \eqref{auxiliary_kappa}} it holds:
\begin{equation}
\label{w_kappa}
-\intTime{s(\kappa, p; \bmu)} = \intSpace{\kappa(0)p(0)} -\intSpace{ \kappa(T)p(T)} + \intTime{s(\kappa, p; \bmu)} = \intTime{s(\kappa,p; \bmu)},
\end{equation}
and $m(y, \kappa; \bmu) = 0$,
{which combined with \eqref{s_T}, \eqref{a_z_y}, \eqref{a_q_p}, \eqref{s_z_y}, \eqref{s_q_p}, hypotheses (a), (c) and (d), implies}
\begin{align*}
\sup_{(y,p) \in (\Cal Q \times \Cal Q)\setminus \{(0,0)\}} \Cal B((y,p), (z, q); \bmu)&  \geq c_y\gamma_a(\bmu)\norm{y}^2_{\Cal Q} + c_{z_y}c_S(\bmu)\gamma_a(\bmu)\norm{z_y}^2_{\Cal Q} - \frac{c_c(\bmu)c_{z_y}}{\alpha}\intTime{\norm{p}_U \norm{z_y}_Y} \\  \nonumber
&  \quad \quad \quad    + \gamma_a(\bmu)\norm{p}_{\Cal Q}^2 - c_m(\bmu)\intTime{\norm{y}_{Y}\norm{q_p}_{Y}} + c_S(\bmu)\gamma_a(\bmu)\norm{q_p}_{\Cal Q}^2 \\
&\geq  \frac{c_y\gamma_a(\bmu)}{2}\norm{y}_{\Cal Q}^2+ \Big( \frac{c_y\gamma_a(\bmu)}{2} - \frac{c_m(\bmu)}{2\eta_2}\Big )\norm{y}_{\Cal Q}^2 \\
& \quad \quad +
 \frac{c_{z_y}c_S(\bmu)\gamma_a(\bmu)}{2}\norm{z_y}_{\Cal Q}^2+ \Big( \frac{c_{z_y}c_S(\bmu)\gamma_a(\bmu)}{2} - \frac{c_c(\bmu)c_{z_y}\eta_1}{2\alpha}\Big )\norm{z_y}_{\Cal Q}^2
 \\
& \quad \quad \quad \quad +
\frac{\gamma_a(\bmu)}{2}\norm{p}_{\Cal Q}^2+ \Big( \frac{\gamma_a(\bmu)}{2} - \frac{c_c(\bmu)c_{z_y}c_u(\bmu)}{2\alpha\eta_1}\Big )\norm{p}_{\Cal Q}^2\\
& \quad \quad \quad \quad  \quad \quad +
\frac{c_S(\bmu)\gamma_a(\bmu)}{2}\norm{q_p}_{\Cal Q}^2+ \Big( \frac{c_S(\bmu)\gamma_a(\bmu)}{2} - \frac{c_m(\bmu)\eta_2}{2}\Big )\norm{q_p}_{\Cal Q}^2,
\end{align*}
for some positive $\eta_1$ and $\eta_2$, deriving from the application of Young's inequality. Choosing
\begin{equation*}
\eta_1 = \frac{\alpha c_S(\bmu)\gamma_a(\bmu)}{c_c(\bmu)}, \quad  c_{z_y} = \frac{c_S(\bmu)\gamma_a(\bmu)^2 \alpha^2}{c_c(\bmu)^2c_u(\bmu)}, \quad \eta_2 = \frac{c_S(\bmu)\gamma_a(\bmu)}{c_m(\bmu)}, \; \text{ and } \; c_y = \frac{c_m(\bmu)^2}{c_S(\bmu)\gamma_a(\bmu)^2 \alpha},
\end{equation*}
and exploiting \eqref{D_continuity}, it holds
\begin{align*}
\sup_{(z,q) \in \Cal Q \times \Cal Q} \Cal B((y,p), (z, q); \bmu)\geq
\min \Big \{
\frac{ c_m(\bmu)^2}{2c_S(\bmu)\gamma_a(\bmu)},
 \frac{c_S(\bmu)\gamma_a(\bmu)^3\alpha^2}{2c_c(\bmu)^2 c_u(\bmu) c_a(\bmu)^2}, \frac{c_S(\bmu)\gamma_a(\bmu)}{2c_a(\bmu)^2}, \frac{\gamma_a(\bmu)}{2}
\Big \}
(\norm{y}_{\Cal Y_0}^2 +
\norm{p}_{\Cal Y_T}^2). \\
\end{align*}
Now we want to give an estimate to the denominator \eqref{surj}. {To this end}, we notice that, with $r = \kappa$ in \eqref{auxiliary_kappa} and exploting relation \eqref{s_T}, the following inequality holds:
\begin{align}
\label{leq_kappa}
\gamma_a(\bmu) \norm{\kappa}_\Cal{Q}^2 & \leq \frac{c_y c_c(\bmu)c_u}{\alpha} \norm{\kappa}_{\Cal Q}\norm{y}_{\Cal Q} \Rightarrow
\norm{\kappa}_\Cal{Q} \leq  \frac{c_y c_c(\bmu)c_u}{\alpha \gamma_a(\bmu)} \norm{y}_{\Cal Q}.
\end{align}
This relation allows us to state that
$$
\begin{aligned}
\sqrt{\norm{\alpha c_y y + c_{z_y}z_y}_{\Cal Q}^2 + \norm{c_p + q_p + \kappa}_{\Cal Q}^2}
& \leq \sqrt{2(c_y^2 \norm{y}_{\Cal Q}^2 + c_{z_y}^2 \norm{z_y}_{\Cal Q}^2
+ \norm{p}_{\Cal Q}^2 + \norm{q_p}_{\Cal Q}^2 + \norm{\kappa}_{\Cal Q}^2)}\\ \nonumber
& \leq \sqrt{2\max \left \{ c_y^2, \frac{c_{z_y}^2}{\beta_a(\bmu)^2} , \frac{1}{\beta_a(\bmu)^2},
\left ( \frac{c_y c_c(\bmu)c_u}{\alpha \gamma_a(\bmu)}\right )^2\right \}
(\norm{y}_{\Cal Y_0}^2+ \norm{p}_{\Cal Y_T}^2)}.
\end{aligned}
$$
Finally, also for this case, we have proven the surjectivity condition \eqref{surj} with
\begin{equation}
\label{surj_caso_2}
\beta(\bmu) = \frac{\min \Big \{
\frac{ c_m(\bmu)^2}{2c_S(\bmu)\gamma_s(\bmu)},
 \frac{c_S(\bmu)\gamma_a(\bmu)^3\alpha^2}{2c_c(\bmu)^2 c_u(\bmu) c_a(\bmu)^2}, \frac{c_S(\bmu)\gamma_a(\bmu)}{2c_a(\bmu)^2}, \frac{\gamma_a(\bmu)}{2}
\Big \}}{\sqrt{2\max \left \{ c_y^2, \frac{c_{z_y}^2}{\beta_a(\bmu)^2} , \frac{1}{\beta_a(\bmu)^2},
\left ( \frac{c_y c_c(\bmu)c_u}{\alpha \gamma_a(\bmu)}\right )^2\right \}}} > 0.
\end{equation}
\end{proof}
We just proved that the surjectivity inequality \eqref{surj} holds for linear time dependent \ocp s governed by parabolic equations, independently {on} the choice of control and observation domain. To guarantee the well-posedness of the optimality system \eqref{global}, we still need the following lemma, which together with Lemma \ref{lemma_surj}, will allow us to prove existence and {uniqueness} of the optimal solution of the \ocp.
\begin{lemma}[Injectivity of $\Cal B\dual$] The bilinear form \eqref{cal_B} {satisfies} the following inf-sup stability condition:
\label{lemma_inj}
\begin{equation}
\label{inj}
\inf_{(z,q)\in (\Cal Q \times \Cal Q)\setminus \{(0,0)\}}\sup_{(y,p)\in (\Cal Y_0 \times \Cal Y_T)\setminus \{(0,0)\}} \frac{\Cal B((y,p), (z, q); \bmu)}
{ \sqrt{\norm{y}_{\Cal Y_0}^2 + \norm{p}_{\Cal Y_T}^2}\sqrt{\norm{z}_{\Cal Q}^2 + \norm{q}_{\Cal Q}^2}} > 0.
\end{equation}
\end{lemma}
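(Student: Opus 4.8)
The plan is to establish the injectivity inf-sup condition \eqref{inj} by showing that $\Cal B$ is non-degenerate in its second argument, i.e. that for every fixed $0 \neq (z,q) \in \Cal Q \times \Cal Q$ one can exhibit a test pair $(y,p) \in \Cal Y_0 \times \Cal Y_T$ making $\Cal B((y,p),(z,q);\bmu)$ strictly positive relative to the product of norms. The natural strategy mirrors the surjectivity argument of Lemma \ref{lemma_surj}, but now with the roles of trial and test spaces reversed. Concretely, I would treat $(z,q)$ as data and solve two auxiliary \emph{backward/forward} parabolic problems to manufacture the optimal test functions: given $z$, let $y_z \in \Cal Y_0$ solve the forward-in-time state-type equation $s(y_z,\cdot) + a(y_z,\cdot) = \langle \text{data}(z),\cdot\rangle$, and given $q$, let $p_q \in \Cal Y_T$ solve the corresponding backward adjoint-type equation. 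The coercivity assumption (c) on $a(\cdot,\cdot;\bmu)$, continuity (d) of $c$, and the sign-definite boundary contributions \eqref{s_T} guarantee these auxiliary problems are well posed, exactly as in Case 2 of the previous lemma.

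Alternatively — and this is probably the cleaner route to present — I would observe that \eqref{inj} is the \emph{dual} inf-sup condition to the surjectivity statement \eqref{surj} already proved, with the quantifiers $\inf$ and $\sup$ interchanged between the trial space $\Cal Y_0 \times \Cal Y_T$ and the test space $\Cal Q \times \Cal Q$. Since the two spaces here play symmetric structural roles (both built from $L^2(0,T;Y)$ and its companion with the time-derivative control), and since $\Cal B$ couples a forward state block with a backward adjoint block, the argument of Lemma \ref{lemma_surj} can be transcribed almost verbatim: for a fixed $(z,q)$ I would build the trial pair by inverting $D_a(\bmu)$ and $D_a(\bmu)\dual$ applied to the time derivatives $\dt z$ and $\dt q$ (the analogues of $z_y$ and $q_p$), absorb the cross terms $\frac{1}{\alpha}c(\cdot,\cdot)$ and $m(\cdot,\cdot)$ via Young's inequality with appropriately tuned weights $\eta_1,\eta_2$, and conclude with the same min/max bookkeeping that produced \eqref{surj_caso_1} and \eqref{surj_caso_2}. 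The key structural inputs are again the coercivity constant $\gamma_a(\bmu)$, the continuity constants $c_a(\bmu),c_c(\bmu),c_m(\bmu)$, the embedding constant $c_u(\bmu)$ of \eqref{Y_in_U}, and the inf-sup constant $\beta_a(\bmu)$ of $a$.

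The main obstacle I anticipate is handling the lack of perfect symmetry between $\Cal B$'s two blocks: the state block carries the factor $-\frac{1}{\alpha}c(p,z;\bmu)$ while the adjoint block carries $+m(y,q;\bmu)$, so the roles of the penalization $\alpha$ and of the observation/control coupling are not interchangeable by a bare transpose. Consequently the weights $c_{z_y}$, $c_p$ (resp.\ $c_y$) chosen in Lemma \ref{lemma_surj} will not carry over unchanged, and I would need to re-tune them — together with the auxiliary-field device of \eqref{auxiliary_kappa} in the case $\Omega_u \neq \Omega_{\text{obs}}$ — to reabsorb the indefinite cross terms. Once that tuning is done, strict positivity of the resulting lower bound follows from positivity of all the constants involved, giving \eqref{inj}; combined with Lemma \ref{lemma_surj}, the continuity \eqref{continuity_B}, and the Ne\v{c}as–Babu\v{s}ka theorem, this yields existence and uniqueness for \eqref{global}.
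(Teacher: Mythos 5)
There is a genuine gap in the central construction of your ``cleaner route''. In the injectivity condition \eqref{inj} the pair $(z,q)$ ranges over $\Cal Q \times \Cal Q = L^2(0,T;Y)\times L^2(0,T;Y)$, whose elements carry \emph{no} time-derivative regularity; hence the objects $\dt{z}$ and $\dt{q}$ on which your construction rests (trial functions obtained by applying $(D_a(\bmu)\dual)^{-1}$ and $D_a(\bmu)^{-1}$ to the time derivatives of the test pair, in analogy with $z_y$ and $q_p$) are simply not defined, and even formally such functions would not be shown to lie in $\Cal Y_0 \times \Cal Y_T$. This asymmetry between the spaces $\Cal Y_0 \times \Cal Y_T$ and $\Cal Q \times \Cal Q$ is precisely why the surjectivity argument of Lemma \ref{lemma_surj} cannot be ``transcribed almost verbatim'': there one \emph{differentiates} elements of $\Cal Y_0 \times \Cal Y_T$, while here one must do the opposite and \emph{integrate} in time. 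The paper's proof takes $\bar y \in \Cal Y_0$ and $\bar p \in \Cal Y_T$ as time antiderivatives, $\dt{\bar y} = z$ and $\dt{\bar p} = -q$ as in \eqref{time_derivative}; these are well defined for every $(z,q) \in \Cal Q \times \Cal Q$, automatically satisfy $\bar y(0)=0$ and $\bar p(T)=0$, and are legitimate elements of the trial space over which the supremum in \eqref{inj} is taken.

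Your anticipated obstacle --- re-tuning the Young weights $\eta_1,\eta_2$ and reusing the auxiliary field \eqref{auxiliary_kappa} --- is also off target: with the antiderivative choice no weights and no auxiliary problem are needed at all. In the case $\Omega_u = \Omega_{\text{obs}}$, testing with $((1/\alpha)\bar y,\bar p)$ makes the indefinite cross terms cancel \emph{exactly}: an integration by parts in time, using $\bar y(0)=0$ and $\bar p(T)=0$ together with $c \equiv m$, gives $-\intTime{c(\bar p, z; \bmu)} = -\intTime{m(\bar y, q; \bmu)}$, so the $-\tfrac{1}{\alpha}c$ and $+\tfrac{1}{\alpha}m$ contributions in \eqref{cal_B} sum to zero, and strict positivity follows from $s(\bar y, z;\bmu) \geq c_S(\bmu)\norm{z}_{\Cal Q}^2$, $-s(\bar p, q;\bmu) \geq c_S(\bmu)\norm{q}_{\Cal Q}^2$ and the non-negative temporal boundary terms generated by the $a$-terms. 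In the case $\Omega_u \neq \Omega_{\text{obs}}$ the paper simply tests with $(\bar y\, \chi_{\Omega\setminus\Omega_{\text{obs}}},\, 0)$: the observation coupling vanishes by support, $m(\bar y \chi_{\Omega\setminus\Omega_{\text{obs}}}, q;\bmu) = 0$, and the control coupling vanishes because the adjoint component is zero, leaving only sign-definite $s$- and $a$-contributions --- again no $\kappa$ and no tuning. Your first sketch (auxiliary forward/backward parabolic problems driven by unspecified ``data$(z)$'') gestures in a workable direction but never specifies the right-hand sides nor shows how the cross terms $\tfrac{1}{\alpha}c$ and $m$ would be absorbed, so as written the proposal does not yield \eqref{inj}.
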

\begin{proof}
First of all, we need to divide the proof in two cases, one dealing with $\Omega_u = \Omega_{\text{obs}}$ and otherwise.\\
\underline{Case 1.} Let us focus our attention on $\Omega_u = \Omega_{\text{obs}}$. As already said, by definition, for every $r, w \in Y$, the bilinear forms $m(r, w; \bmu)$ and $c(w, r; \bmu)$ coincide. It is clear that
\begin{equation*}
\label{inj_prova}
\sup_{(y,p)\in(\Cal Y_0 \times \Cal Y_T)\setminus \{(0,0)\}} \Cal B((y,p), (z, q); \bmu) \geq \Cal B((1/ \alpha)\bar y,\bar p ), (z, q); \bmu ),
\end{equation*}
where the variable $\bar y \in \Cal Y_0$ and $, \bar p \in \Cal Y_T$ have been chosen with the following properties:
\begin{equation}
\label{time_derivative}
\dt{\bar y} = z  \quad \text{and} \quad \dt{\bar p} = - q.
\end{equation}
Thus, we have that
\begin{align}
\label{inj_prova_2}
\Cal B((1/\alpha)\bar y,\bar p ), (z, q); \bmu ) & = \frac{1}{\alpha} \intTime{ s(\bar y, z; \bmu)} + \intTime{\frac{1}{\alpha}a(\bar y, z; \bmu)} - \frac{1}{\alpha}\intTime{ c(\bar p, z; \bmu)}\\ \nonumber
& \quad \quad  + \frac{1}{\alpha}\intTime{m(\bar y, q; \bmu)} - \intTime{s(\bar p, q; \bmu)} + \intTime{a(q, \bar p; \bmu)}.
\end{align}
Thanks to \eqref{time_derivative}, we notice that:
\begin{align}
s(\bar y, z; \bmu) \geq c_S(\bmu)\norm{z}^2_{\Cal Q} \quad \text{and} \quad - s(\bar p, q; \bmu)  \geq c_S(\bmu)\norm{q}^2_{\Cal Q}.
\end{align}
Furthermore, from the definition of $c(\cdot, \cdot, \bmu)$ and $m(\cdot, \cdot, \bmu)$, the time boundary conditions for state and adjoint variables and the assumption $\Omega_u = \Omega_{\text{obs}}$, we obtain the following relation
\begin{equation}
- \intTime{c(\bar p, z; \bmu)} = - \intTimeSpace{\sum_i^{Q_C}c_C(\bmu)^i\chi_{\Omega_{C}^i}\bar p\dt{\bar y}} =  \intTimeSpace{\sum_i^{Q_C}c_C(\bmu)^i\chi_{\Omega_{C}^i}\bar y\dt{\bar p}} = -\intTime{m(\bar y, q; \bmu)}.
\end{equation}
Thus, exploiting the {aforementioned} properties, it holds
\begin{align*}
\Cal B((1/ \alpha)\bar y,\bar p ), (z, q); \bmu ) & \geq \frac{c_S(\bmu)}{\alpha} \norm{z}_{\Cal Q}^2 + \frac{1}{\alpha}\intTime{a(\bar y, \dt{\bar y}; \bmu)}  + \norm{q}^2_{\Cal Q} - \intTime{a(\dt{\bar p}, \bar p; \bmu)}.
\end{align*}
Assuming that the time derivative commutes with the bilinear form operators\footnote{This is always the case for the Hilbert spaces considered in Section \ref{results}.}, finally we prove that
\begin{align*}
\Cal B((1/ \alpha)\bar y,\bar p ), (z, q); \bmu ) & \geq
\frac{c_S(\bmu)}{\alpha} \norm{z}_{\Cal Q}^2 + \frac{1}{2\alpha}\intTime{\dt{a(\bar y, \bar y; \bmu)}}  + c_S(\bmu)\norm{q}^2_{\Cal Q} - \half \intTime{\dt{a({\bar p}, \bar p; \bmu)}} \\ \nonumber
& \geq \frac{c_S(\bmu)}{\alpha} \norm{z}_{\Cal Q}^2  + \frac{\gamma_a(\bmu)}{2\alpha} \norm{y(T)}^2_Y + c_S(\bmu)\norm{q}^2_{\Cal Q} + \frac{\gamma_a(\bmu)}{2}\norm{p(0)}_{Y}^2 > 0.
\end{align*}
Since the inequality does not depend on $z$ and $q$, we have proved \eqref{inj}. \\
\underline{Case 2.} Let us suppose $\Omega_u \neq \Omega_{\text{obs}}$\footnote{See Footnote \ref{foot_1}.}. This allows us to consider $\Omega_{\text{obs}} \neq \Omega$. Thus, if we consider $\bar y$ as \eqref{time_derivative} and the indicator function $\chi_{\Omega \setminus \Omega_{\text{obs}}}$, it is clear that $m(\bar y \chi_{\Omega \setminus \Omega_{\text{obs}}}, q) = 0$ for all $q \in \Cal Q$, by definition. Then, the following holds:
\begin{align}
\label{inj_caso_2}
\Cal B((\bar y \chi_{\Omega \setminus \Omega_{\text{obs}}},0), (z, q); \bmu ) & = s(\bar y \chi_{\Omega \setminus \Omega_{\text{obs}}}, z; \bmu) +\intTime{ a(\bar y \chi_{\Omega \setminus \Omega_{\text{obs}}}, z; \bmu)} \\ \nonumber
& \geq \norm{z\chi_{\Omega\setminus \Omega_{\text{obs}}}}_{\Cal Q}^2 + \frac{\gamma_a({\bmu})}{2}\norm{y(T)}_{Y}^2,
\end{align}
following the same arguments of Case 1. Since \eqref{inj_caso_2} is not dependent on the choice of the test functions, inequality \eqref{inj} is verified also in this case.
\end{proof}
Thanks Lemma \ref{lemma_surj} and Lemma \ref{lemma_inj}, we can exploit Ne\v{c}as-Babu\v{s}ka theorem and we can now state the following well-posedness result:
\begin{theorem}
For a given $\bmu \in \Cal P$ and observation $y_{\text{d}} \in L^2(0,T; Y_{\text{obs}})$, the problem \eqref{global} has a unique solution pair $(y, p) \in \Cal Y_0 \times \Cal Y_T$.
\end{theorem}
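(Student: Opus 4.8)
The plan is to invoke the Ne\v{c}as-Babu\v{s}ka theorem \cite{necas} directly, since all of its hypotheses have already been assembled in the discussion preceding Lemma \ref{lemma_surj} together with Lemmas \ref{lemma_surj} and \ref{lemma_inj}. The proof therefore reduces to checking that the abstract framework applies to the trial space $\Cal Y_0 \times \Cal Y_T$ and the test space $\Cal Q \times \Cal Q$. Both are Hilbert spaces, being finite products of the Hilbert spaces $\Cal Y_0$, $\Cal Y_T$ and $\Cal Q$, endowed with the natural product norms $\sqrt{\norm{y}_{\Cal Y_0}^2 + \norm{p}_{\Cal Y_T}^2}$ and $\sqrt{\norm{z}_{\Cal Q}^2 + \norm{q}_{\Cal Q}^2}$ appearing throughout \eqref{surj} and \eqref{inj}.

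First I would verify the continuity of the data: for fixed $\bmu \in \Cal P$ and $y_d \in L^2(0,T; Y_{\text{obs}})$, the right-hand side $\Cal F(\bmu)$ is a bounded linear functional on $\Cal Q \times \Cal Q$, as already observed right before Lemma \ref{lemma_surj}, while the bilinear form $\Cal B$ is continuous by \eqref{continuity_B} with constant $c_{\Cal B}(\bmu)$. The two structural hypotheses of the theorem are then supplied verbatim by the two lemmas: the inf-sup (surjectivity) bound $\beta_{\Cal B}(\bmu) \geq \beta(\bmu) > 0$ from Lemma \ref{lemma_surj}, and the injectivity (adjoint non-degeneracy) condition \eqref{inj} from Lemma \ref{lemma_inj}, which guarantees that no nonzero test pair $(z,q)$ can annihilate $\Cal B((y,p),(z,q);\bmu)$ for every $(y,p)$. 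With these four ingredients in hand the Ne\v{c}as-Babu\v{s}ka theorem yields a unique $(y,p) \in \Cal Y_0 \times \Cal Y_T$ solving \eqref{global}, together with the stability estimate $\sqrt{\norm{y}_{\Cal Y_0}^2 + \norm{p}_{\Cal Y_T}^2} \leq \beta(\bmu)^{-1} \norm{\Cal F(\bmu)}_{(\Cal Q \times \Cal Q)\dual}$.

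The genuinely substantive work lies entirely in the two preceding lemmas, not in this theorem: once the inf-sup and injectivity conditions are established, the conclusion is a black-box consequence of the abstract theory, so there is no real obstacle at this stage. The only point demanding a little care is bookkeeping, namely confirming that the norms used in the statements of Lemmas \ref{lemma_surj} and \ref{lemma_inj} coincide with the product norms on the trial and test spaces required by \cite{necas}, and that $c_{\Cal B}(\bmu)$ and $\beta(\bmu)$ are both strictly positive and finite, which is immediate from assumptions (a)--(d). I would close by remarking that the same argument, with $\Cal Y_0$ and $\Cal Y_T$ replaced by their steady counterparts, delivers the analogous well-posedness result in the steady setting announced in the introduction.
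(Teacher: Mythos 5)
Your proposal is correct and follows exactly the paper's own argument: the theorem there is likewise a direct application of the Ne\v{c}as-Babu\v{s}ka theorem, combining the continuity of $\Cal B$ in \eqref{continuity_B} and of $\Cal F(\bmu)$ with the inf-sup condition of Lemma \ref{lemma_surj} and the injectivity condition of Lemma \ref{lemma_inj}. Your additional bookkeeping (Hilbert product spaces, matching norms, the stability estimate) is a harmless elaboration of the same route.
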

\begin{remark}[$\Omega_{\text{obs}} = \Omega$, time dependent \ocp s]
\label{remark_omega_u}In all the proofs, guided by the test cases at hand in Section \ref{results}, we have chosen $\Omega_{\text{obs}}\neq \Omega$. Lemma \ref{lemma_surj} and Lemma \ref{lemma_inj} still hold when $\Omega_{\text{obs}} = \Omega$, while the $ \Omega_u \subset \overline \Omega$. We {outline} the idea of proofs for the sake of clarity, even if it is similar to the cases already analysed in the two Lemmas.
\begin{itemize}
\item[{$\small{\circ}$}] \underline{Lemma \ref{lemma_surj}}. One can consider $(c_y y + c_{z_y}z_y + \kappa, p + q_p) \in \Cal Q \times\Cal Q$, where $\kappa \in \mathcal Y_T$ is the solution of the following backward parabolic problem: given $\bmu \in \Cal P$ and $p \in \Cal Q$
\begin{equation}
\label{auxiliary_kappa_back}
\begin{cases}
\displaystyle -\intTime{ s(\kappa, r; \bmu)} + \intTime{a(r, \kappa; \bmu)} = - \intTime{m(p,r; \bmu)}
+ \frac{c_y}{\alpha}\intTime{c(r, p; \bmu)} &  \forall r \in \Cal Q, \\
\kappa(T) = 0 & \text{in } \Omega, \\
\kappa \equiv 0 & \text{in } \Omega_{u}.
\end{cases}
\end{equation}
The inf-sup condition is still verified with the following $\bmu-$dependent constant:
\begin{equation}
\beta(\bmu) =
\frac{
\min \Big \{
\frac{ c_m(\bmu)^2}{2c_S(\bmu)\gamma_s(\bmu)},
 \frac{c_S(\bmu)\gamma_a(\bmu)^3\alpha^2}{2c_c(\bmu)^2 c_u(\bmu) c_a(\bmu)^2}, \frac{c_S(\bmu)\gamma_a(\bmu)}{2c_a(\bmu)^2}, \frac{\gamma_a(\bmu)}{2}
\Big \}}{\sqrt{2\max \left \{ c_y^2, \frac{c_{z_y}^2}{\beta_a(\bmu)^2} , \frac{1}{\beta_a(\bmu)^2},
\left ( \frac{c_m(\bmu)c_{\text{obs}}}{\gamma_a(\bmu)}\right )^2\right \}}},
\end{equation}
having applied
\begin{equation}
\label{kappa_remark}
\gamma_a(\bmu)\norm{\kappa}_Y \leq c_m(\bmu)c_{\text{obs}}\norm{p}_Y,
\end{equation}
where we have followed the same arguments of \eqref{leq_kappa}, with $m(\kappa, p; \bmu)$ as right hand side, exploiting \eqref{Y_in_Y_obs}.
\item[{$\small{\circ}$}] \underline{Lemma \ref{lemma_inj}}. In this case, the inequality \eqref{inj} holds choosing $(0, \bar p \chi_{\overline \Omega \setminus \Omega_u})$ with $\bar p$ as in \eqref{time_derivative}.
\end{itemize}
\end{remark}

\subsection{Steady \ocp s: Problem Formulation}
We want to provide a steady interpretation for the concepts {presented} in {the previous} Section. First of all, the variables are $y, p \in Y$, and $u \in U$, while $y_\text{d} \in Y_{\text{obs}}$. No time integration is considered, i.e. the integration domain is only given by the reference domain $\Omega$. We define $\Cal B_s \goesto{(Y \times Y)}{(Y \times Y)}{\mathbb R}$ as
\begin{equation*}
\label{B_s}
\Cal B_s((y,p), (z,q); \bmu) = a(y, z; \bmu) - \frac{1}{\alpha}c(p, z; \bmu) + m(y, q ;\bmu) + a(q, p; \bmu),
\end{equation*}
and the right hand side as
\begin{equation*}
\label{F_s}
\left \la \Cal F_s (\bmu), (z,q) \right \ra = m(y_d, q;\bmu) + \left \la G (\bmu), z \right \ra
\end{equation*}
The global steady problem reads:  given $\bmu \in \Cal P$, find the pair $(y,u) \in Y \times Y$ such that
\begin{equation}
\label{global_s}
\Cal B_s((y,p), (z,q); \bmu) = \left \la \Cal F_s (\bmu), (z,q) \right \ra  \quad \forall (z,q) \in Y \times Y.
\end{equation}
As in the time dependent case, we would like to use the Ne\v{c}as-Babus\v{k}a theorem to prove the well-posedness of such a optimality system. {Towards this goal we can once again prove the inequalities corresponding to Lemma \ref{lemma_surj} and Lemma \ref{lemma_inj} in the steady case}. The arguments are not so different from the time dependent scenario; they are less involved, but we report them for the sake of completeness.
\begin{lemma}[Surjectivity of $\Cal B_s\dual$]
\label{lemma_surj_s}
The bilinear form \eqref{global_s} {satisfies} the following inf-sup stability condition: {there exists $\beta_s(\bmu) > 0$ such that}
\begin{equation}
\label{surj_s}
\beta_{{\Cal B}_s}(\bmu) \eqdot
\inf_{(y, p) \in (Y \times Y)\setminus \{(0,0)\}}\sup_{(z, q) \in (Y \times Y)\setminus \{(0,0)\}} \frac{\Cal B_s((y,p), (z, q); \bmu)}
{ \sqrt{\norm{y}_{Y}^2 + \norm{p}_{Y}^2}\sqrt{\norm{z}_{Y}^2 + \norm{q}_{Y}^2}} \geq \beta_s(\bmu).
\end{equation}
\end{lemma}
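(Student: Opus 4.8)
The plan is to mirror the strategy of Lemma \ref{lemma_surj}, exploiting the fact that the steady setting is considerably simpler: since no time derivative appears, the auxiliary test functions $z_y$ and $q_p$ that were built in the parabolic proof from $(D_a(\bmu)\dual)^{-1}\dt y$ and $-D_a(\bmu)^{-1}\dt p$ collapse to zero, and the whole argument reduces to a direct use of the coercivity of $a(\cdot,\cdot;\bmu)$ together with the structure of the cross terms in \eqref{B_s}. As in Lemma \ref{lemma_surj}, I would split the argument into the case $\Omega_u = \Omega_{\text{obs}}$ and the case $\Omega_u \neq \Omega_{\text{obs}}$.

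In the first case I would fix $0 \neq (y,p) \in Y \times Y$ and test \eqref{B_s} against the pair $(z,q) = (\alpha y, p)$. Since $\Omega_u = \Omega_{\text{obs}}$ forces $m(y,p;\bmu) = c(p,y;\bmu)$, the term $-\tfrac1\alpha c(p, \alpha y;\bmu) = -c(p,y;\bmu)$ cancels exactly against $m(y,p;\bmu)$, leaving $\Cal B_s((y,p),(\alpha y, p);\bmu) = \alpha\, a(y,y;\bmu) + a(p,p;\bmu) \geq \alpha\gamma_a(\bmu)\bigl(\norm{y}_Y^2 + \norm{p}_Y^2\bigr)$ by assumption (c) and $\alpha \leq 1$. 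Bounding the denominator by $\sqrt{\alpha^2\norm{y}_Y^2 + \norm{p}_Y^2} \leq \sqrt{\norm{y}_Y^2 + \norm{p}_Y^2}$, again using $\alpha \leq 1$, yields \eqref{surj_s} with the clean constant $\beta_s(\bmu) = \alpha\gamma_a(\bmu)$.

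In the second case the cross terms no longer cancel on their own, so I would reproduce the corrector idea of \eqref{auxiliary_kappa} in elliptic form: I introduce $\kappa := \kappa(\bmu) \in Y$, supported in $\Omega \setminus \Omega_{\text{obs}}$ so that $m(y,\kappa;\bmu)=0$, defined as the solution of $a(\kappa, r;\bmu) = -m(y,r;\bmu) + \tfrac1\alpha c(r,y;\bmu)$ for all $r \in Y$; its well-posedness follows from Lax--Milgram, using the continuity of $m$ and $c$ and the preserved coercivity of $a(\cdot,\cdot;\bmu)$ on $\Omega\setminus\Omega_{\text{obs}}$. Testing \eqref{B_s} against $(z,q)=(y, p+\kappa)$ and taking $r = p$ in the definition of $\kappa$, the contributions $\mp\tfrac1\alpha c(p,y;\bmu)$ and $\pm m(y,p;\bmu)$ cancel while $m(y,\kappa;\bmu)=0$, so that $\Cal B_s((y,p),(y,p+\kappa);\bmu) = a(y,y;\bmu)+a(p,p;\bmu) \geq \gamma_a(\bmu)\bigl(\norm{y}_Y^2+\norm{p}_Y^2\bigr)$. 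To control the denominator I would estimate $\norm{\kappa}_Y$ exactly as in \eqref{leq_kappa}: setting $r=\kappa$ gives $\gamma_a(\bmu)\norm{\kappa}_Y^2 \leq \tfrac1\alpha c_c(\bmu) c_u\norm{\kappa}_Y\norm{y}_Y$ through \eqref{Y_in_U}, hence $\norm{\kappa}_Y \leq \tfrac{c_c(\bmu)c_u}{\alpha\gamma_a(\bmu)}\norm{y}_Y$, which bounds $\norm{p+\kappa}_Y^2$ by a constant multiple of $\norm{y}_Y^2 + \norm{p}_Y^2$ and closes the estimate with a positive, $\bmu$-dependent $\beta_s(\bmu)$.

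I expect no serious obstacle, since the difficulty of the parabolic proof was precisely the time-derivative bookkeeping, which is absent here; the only points requiring care are the well-posedness of the elliptic corrector problem, which rests on coercivity of $a(\cdot,\cdot;\bmu)$ being inherited on $\Omega\setminus\Omega_{\text{obs}}$, and the verification that the cross terms cancel identically after the substitution $r=p$. As in Remark \ref{remark_omega_u}, the complementary configuration $\Omega_{\text{obs}} = \Omega$ with $\Omega_u \subsetneq \overline\Omega$ would be handled by the symmetric choice of a corrector vanishing on $\Omega_u$ and acting through the adjoint slot.
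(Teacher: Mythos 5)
Your proposal is correct and follows the paper's own proof essentially verbatim: the same test pair $(z,q)=(\alpha y, p)$ with cancellation of the cross terms via $m(y,p;\bmu)=c(p,y;\bmu)$ when $\Omega_u=\Omega_{\text{obs}}$, and the same elliptic corrector $\kappa$ vanishing on $\Omega_{\text{obs}}$ tested through $(y,p+\kappa)$, with the identical bound $\norm{\kappa}_Y \leq \frac{c_c(\bmu)c_u}{\alpha\gamma_a(\bmu)}\norm{y}_Y$ and the same resulting constants $\alpha\gamma_a(\bmu)$ and $\gamma_a(\bmu)/\sqrt{2\max\{1,(c_c(\bmu)c_u/(\alpha\gamma_a(\bmu)))^2\}}$. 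The only (harmless, arguably cleaner) deviation is that you pose the corrector problem with test functions in $Y$ and invoke Lax--Milgram, where the paper writes $r \in L^2(\Omega)$ and refers back to the parabolic case.
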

\begin{proof}
\underline{Case 1.} Let us suppose $\Omega_u = \Omega_{\text{obs}}$, as in {Lemma \ref{lemma_surj}}\footnote{\label{foot_2}Also in this case, the assumption is made in order to comply with Section \ref{results}. We postpone the generalization to $\Omega_{\text{obs}} = \Omega$ and $\Omega_u \neq \Omega$  to Remark \ref{remark_omega_u_s}.}. It is clear that, choosing $z = \alpha y$ and $q = p$ leads to
\begin{equation}
\label{estimate_1}
\sup_{(y,p)\in (Y \times Y)\setminus \{(0,0)\}} \frac{\Cal B_s((y,p), (z, q); \bmu)}
{ \sqrt{\norm{y}_{Y}^2 + \norm{p}_{Y}^2}\sqrt{\norm{z}_{Y}^2 + \norm{q}_{Y}^2}} \geq
\alpha \gamma_a(\bmu),
\end{equation}
since under the assumption of coincidence of control and observation domain $c(p, y;\bmu)$ is the same of $m(y,p;\bmu)$. For the arbitrariness of $y$ and $p$, the inf-sup condition \eqref{surj_s} holds. \\
\underline{Case 2.} We now consider $\Omega_u \neq \Omega_{\text{obs}}$, assuming $\Omega_{\text{obs}} \neq \Omega$\footnote{See Footnote \ref{foot_2}.}. Choosing $z = y$ and $q = p + \kappa$, {given $\bmu \in \Cal P$, we define $\kappa$ as the solution to the following equation}
\begin{equation}
\begin{cases}
\label{kappa_s}
\displaystyle a(\kappa,r;\bmu) = - m(y, r; \bmu) + \frac{1}{\alpha} c(r, y; \bmu)  & \forall r \in L^2(\Omega), \\
\kappa \equiv 0 & \text{in } \Omega_{\text{obs}}.
\end{cases}
\end{equation}
Thanks to \eqref{kappa_s}, it is easy to prove that
$\displaystyle \norm{\kappa}_Y \leq \frac{c_c(\bmu)c_u}{\alpha\gamma_a(\bmu)} \norm{y}_Y$,using the same strategy already presented in Lemma \ref{lemma_surj}. Thus, \eqref{surj_s} holds with
\begin{equation*}
\label{surj_kappa_ok}
\beta_s(\bmu) = \frac{\gamma_a(\bmu)}{\sqrt{\left (2\max \left \{1, \left ( \frac{c_c(\bmu)c_u}{\alpha\gamma_a(\bmu)} \right )^2 \right \} \right )}} > 0.
\end{equation*}
\end{proof}
The proof of  the steady version of \eqref{inj} is quite straightforward too.
\begin{lemma}[Injectivity of ${\Cal B}_s\dual$] The bilinear form \eqref{cal_B} {satisfies} the following inf-sup stability condition:
\label{lemma_inj_s}
\begin{equation}
\label{inj_s}
\inf_{(z, q) \in (Y \times Y)\setminus \{(0,0)\}}\sup_{(y, p) \in (Y \times Y)\setminus \{(0,0)\} } \frac{\Cal B_s((y,p), (z, q); \bmu)}
{ \sqrt{\norm{y}_{Y}^2 + \norm{p}_{Y}^2}\sqrt{\norm{z}_{Y}^2 + \norm{q}_{Y}^2}} > 0.
\end{equation}
\end{lemma}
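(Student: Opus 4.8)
The plan is to mirror the structure of Lemma~\ref{lemma_surj_s}, simply exchanging the roles of trial and test pairs: for each fixed nonzero $(z,q)\in Y\times Y$ I will construct an explicit pair $(y,p)\in Y\times Y$ for which $\Cal B_s((y,p),(z,q);\bmu)$ is bounded below by $\gamma_a(\bmu)\bigl(\norm{z}_Y^2+\norm{q}_Y^2\bigr)$, while $\norm{y}_Y^2+\norm{p}_Y^2$ stays controlled by a multiple of $\norm{z}_Y^2+\norm{q}_Y^2$. Dividing the two estimates then yields a strictly positive lower bound for the inner supremum, uniformly in $(z,q)$, which is exactly \eqref{inj_s}. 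As before, the argument splits according to whether control and observation domains coincide.

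In the case $\Omega_u=\Omega_{\text{obs}}$ I would take $y=z/\alpha$ and $p=q$, the transpose of the choice $z=\alpha y$, $q=p$ used in Lemma~\ref{lemma_surj_s}. Then $\Cal B_s$ reduces to $\frac{1}{\alpha} a(z,z;\bmu)+a(q,q;\bmu)$, because the two cross terms $-\frac{1}{\alpha} c(q,z;\bmu)$ and $\frac{1}{\alpha} m(z,q;\bmu)$ cancel exactly: on coinciding domains $c(q,z;\bmu)=m(z,q;\bmu)$, the identity already used in Case~1 of Lemma~\ref{lemma_inj}, combined with the symmetry of $m$. Coercivity of $a$ then gives the lower bound $\frac{\gamma_a(\bmu)}{\alpha}\norm{z}_Y^2+\gamma_a(\bmu)\norm{q}_Y^2\ge \gamma_a(\bmu)(\norm{z}_Y^2+\norm{q}_Y^2)$ (using $\alpha\le1$), while $\norm{y}_Y^2+\norm{p}_Y^2\le \frac{1}{\alpha^2}(\norm{z}_Y^2+\norm{q}_Y^2)$; the quotient is therefore at least $\alpha\gamma_a(\bmu)>0$.

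In the case $\Omega_u\neq\Omega_{\text{obs}}$ (hence $\Omega_{\text{obs}}\neq\Omega$) the cross terms no longer cancel, and I would absorb them into the $y$-slot with an auxiliary corrector, in analogy with \eqref{kappa_s}. Precisely, I would let $\kappa\in Y$ solve
\begin{equation*}
a(\kappa,r;\bmu)=\frac{1}{\alpha} c(q,r;\bmu)-m(q,r;\bmu)\quad\forall r\in L^2(\Omega),\qquad \kappa\equiv 0 \ \text{in }\Omega_{\text{obs}},
\end{equation*}
which is well posed by the continuity and coercivity of $a$ on the functions vanishing on $\Omega_{\text{obs}}$, and then choose $y=z+\kappa$, $p=q$. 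Testing the defining relation with $r=z$ shows that $a(\kappa,z;\bmu)$ cancels precisely the residual $-\frac{1}{\alpha} c(q,z;\bmu)+m(z,q;\bmu)$, while $m(\kappa,q;\bmu)=0$ since $\kappa$ vanishes on the observation domain; what survives is again $a(z,z;\bmu)+a(q,q;\bmu)\ge\gamma_a(\bmu)(\norm{z}_Y^2+\norm{q}_Y^2)$. For the denominator I would test the auxiliary problem with $r=\kappa$ — noting that $m(q,\kappa;\bmu)=0$ — to obtain $\gamma_a(\bmu)\norm{\kappa}_Y^2\le\frac{1}{\alpha} c_c(\bmu) c_u\,\norm{q}_Y\norm{\kappa}_Y$, hence $\norm{\kappa}_Y\le \frac{c_c(\bmu)c_u}{\alpha\gamma_a(\bmu)}\norm{q}_Y$, exactly as in the proof of Lemma~\ref{lemma_surj_s}. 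This bounds $\norm{y}_Y^2+\norm{p}_Y^2$ by a fixed multiple of $\norm{z}_Y^2+\norm{q}_Y^2$ and again delivers a strictly positive quotient.

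The routine pieces here (collecting constants, the elementary bound on $\norm{\kappa}_Y$) are much lighter than in the parabolic setting, since no Young-type splitting of genuinely competing terms is needed: the correctors are arranged so that the bad cross terms are \emph{exactly} cancelled rather than dominated. Consequently the only point that truly requires care is the design of $\kappa$ in the second case — it must simultaneously annihilate the mismatched cross terms when paired with $z$ and remain invisible to $m$, which is what forces both the constraint $\kappa\equiv0$ on $\Omega_{\text{obs}}$ and the specific right-hand side above. Checking that this elliptic problem is well posed and that the cancellation is exact is the main (though modest) obstacle; everything else follows from the coercivity of $a$.
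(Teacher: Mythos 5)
Your proof is correct, and Case 1 is essentially the paper's argument: the authors transpose Case 1 of Lemma \ref{lemma_surj_s} and invoke the same estimate \eqref{estimate_1} with constant $\alpha\gamma_a(\bmu)$. Note that the paper literally writes ``choosing $y=\alpha z$ and $p=q$'', under which the cross terms $-\frac{1}{\alpha}c(q,z;\bmu)+\alpha\, m(z,q;\bmu)$ do \emph{not} cancel for $\alpha<1$; your scaling $y=z/\alpha$ is the one consistent with the choice $(1/\alpha)\bar y$ in the parabolic Lemma \ref{lemma_inj} and with the claimed constant, so on this point your version is the corrected reading. In Case 2 you genuinely depart from the paper: there the authors simply take $p=0$ and $y=z\chi_{\Omega\setminus\Omega_{\text{obs}}}$, which annihilates the $m$-term and yields $\sup_{(y,p)}\Cal B_s\geq\gamma_a(\bmu)\norm{z\chi_{\Omega\setminus\Omega_{\text{obs}}}}_{Y}^2$ in two lines, whereas you transpose the corrector device \eqref{kappa_s} of the surjectivity lemma, driving $\kappa$ by $q$ and choosing $(y,p)=(z+\kappa,q)$. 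Your route costs an auxiliary elliptic solve (posed in the same overdetermined fashion --- equation for all $r$ together with $\kappa\equiv 0$ on $\Omega_{\text{obs}}$ --- that the paper itself uses in \eqref{kappa_s} and \eqref{auxiliary_kappa}, so no new rigor debt), but it buys strictly more: the exact cancellation leaves $a(z,z;\bmu)+a(q,q;\bmu)\geq\gamma_a(\bmu)\bigl(\norm{z}_Y^2+\norm{q}_Y^2\bigr)$, and with $\norm{\kappa}_Y\leq\frac{c_c(\bmu)c_u}{\alpha\gamma_a(\bmu)}\norm{q}_Y$ you obtain a quantitative, uniform lower bound on the quotient of the same form as $\beta_s(\bmu)$ in Lemma \ref{lemma_surj_s}, i.e.\ genuinely the infimum bounded away from zero. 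The paper's bound, by contrast, is vacuous whenever $z\chi_{\Omega\setminus\Omega_{\text{obs}}}=0$ but $(z,q)\neq(0,0)$ (for instance $z$ supported in $\Omega_{\text{obs}}$, or $z=0$ with $q\neq 0$), and its test element $z\chi_{\Omega\setminus\Omega_{\text{obs}}}$ need not even belong to $Y$, since an indicator function times an $H^1$ function is in general not $H^1$; your construction avoids both defects. So your argument is not merely an alternative but actually repairs the weak points of the paper's Case 2.
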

\begin{proof}
Also the proof of this lemma is divided in two cases. \\
\underline{Case 1}. First of all, we consider $\Omega_u = \Omega_{\text{obs}}$. In this case we can apply the same arguments of Case 1 of Lemma \ref{lemma_surj_s} choosing $y = \alpha z$ and $p = q$, obtaining the same estimate \eqref{estimate_1} where the supremum is considered for all the pair $(y,p) \in Y \times Y$, which already proves \eqref{inj_s} \\
\underline{Case 2.} Now, we suppose $\Omega_u \neq \Omega_{\text{obs}}$ and, once again, $\Omega_{\text{obs}} \neq \Omega$ is our choice without loss of generality\footnote{See Footnote \ref{foot_2}.}. To prove the inequality \eqref{inj_s}, we choose $p = 0$ and $y = z\chi_{\Omega \setminus \Omega_{\text{obs}}}$ to obtain
\begin{equation*}
\sup_{(y,p) \in (Y \times Y)\setminus \{(0,0)\}} \Cal B_s((y,p), (z, q); \bmu) \geq \gamma_a(\bmu)\norm{z\chi_{\Omega \setminus \Omega_{\text{obs}}}}_{Y}^2,
\end{equation*}
which proves the statement.
\end{proof}
Furthermore, the continuity of $\mathcal B_s(\cdot, \cdot; \bmu)$ is trivial due to the continuity of the various considered bilinear forms. In other words, exploiting the continuity of $\mathcal B_s(\cdot, \cdot, \bmu)$ and $\Cal F_s(\bmu)$ combined with Lemma \ref{lemma_surj_s} and Lemma \ref{lemma_inj_s}, we can apply the Ne\v{c}as-Babu\v{s}ka theorem and as a consequence we state the following theorem.
\begin{theorem}
For a given $\bmu \in \Cal P$ and for a given observation $y_{\text{d}} \in Y_{\text{obs}}$, problem \eqref{global_s} has a unique solution $(y,p) \in Y \times Y$.
\end{theorem}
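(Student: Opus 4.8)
The plan is to verify that all hypotheses of the Ne\v{c}as-Babu\v{s}ka theorem \cite{necas} are in place for the variational problem \eqref{global_s}, posed on the product Hilbert space $Y \times Y$ equipped with the norm $\sqrt{\norm{\cdot}_Y^2 + \norm{\cdot}_Y^2}$, and then to read off existence and uniqueness as an immediate consequence. Since the genuine analytical effort has already been spent in Lemma \ref{lemma_surj_s} and Lemma \ref{lemma_inj_s}, this final step is essentially a matter of assembling the pieces, exactly mirroring the passage from Lemma \ref{lemma_surj} and Lemma \ref{lemma_inj} to the time-dependent well-posedness theorem.

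First I would record the continuity of the data. For fixed $\bmu \in \Cal P$ and $y_d \in Y_{\text{obs}}$, the functional $\Cal F_s(\bmu)$ is linear and bounded on $Y \times Y$: the term $m(y_d, q; \bmu)$ is controlled using assumption (a) together with \eqref{Y_in_Y_obs}, while $\la G(\bmu), z \ra$ is bounded because $G(\bmu) \in Y\dual$. Likewise $\Cal B_s(\cdot, \cdot; \bmu)$ is a continuous bilinear form, being a finite linear combination of the continuous forms $a$, $c$ and $m$ of assumptions (a), (c) and (d); I would denote the resulting continuity constant by $c_{\Cal B_s}(\bmu) > 0$, in analogy with \eqref{continuity_B}.

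Next I would invoke the two inf-sup conditions already established. Lemma \ref{lemma_surj_s} furnishes a constant $\beta_s(\bmu) > 0$ realizing the bound \eqref{surj_s}, which is precisely the first Babu\v{s}ka (inf-sup) condition guaranteeing surjectivity of the operator associated to $\Cal B_s(\cdot, \cdot; \bmu)$. Lemma \ref{lemma_inj_s} supplies the transposed nondegeneracy condition \eqref{inj_s}, namely that $\sup_{(y,p)} \Cal B_s((y,p),(z,q); \bmu) > 0$ for every $(z,q) \neq (0,0)$, which rules out a nontrivial kernel of the adjoint and thereby yields injectivity. Together with the continuity recorded above, these are exactly the two structural hypotheses required by the Ne\v{c}as-Babu\v{s}ka framework on the Hilbert space $Y \times Y$.

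With all hypotheses verified, the theorem delivers a unique $(y,p) \in Y \times Y$ solving \eqref{global_s}, along with the stability estimate $\sqrt{\norm{y}_Y^2 + \norm{p}_Y^2} \leq \beta_s(\bmu)^{-1} \norm{\Cal F_s(\bmu)}$. I do not expect any substantive obstacle here beyond bookkeeping: the only point deserving care is the consistency of the functional-analytic setting, i.e.\ that Lemma \ref{lemma_surj_s} and Lemma \ref{lemma_inj_s} are both stated on the same product space $Y \times Y$ with the same product norm appearing in the Ne\v{c}as-Babu\v{s}ka hypotheses, so that the inf-sup constant $\beta_s(\bmu)$ and the injectivity condition combine correctly. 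Once this matching is checked, the conclusion is immediate.
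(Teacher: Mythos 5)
Your proposal is correct and follows exactly the paper's route: the authors likewise obtain the theorem as an immediate consequence of the Ne\v{c}as-Babu\v{s}ka theorem, combining the continuity of $\Cal B_s(\cdot,\cdot;\bmu)$ and $\Cal F_s(\bmu)$ (which they also dismiss as trivially inherited from assumptions (a), (c), (d)) with the inf-sup conditions of Lemma \ref{lemma_surj_s} and Lemma \ref{lemma_inj_s} on the product space $Y \times Y$. Your additional remarks on the stability estimate and on checking that both lemmas use the same product norm are sound bookkeeping, not a deviation from the paper's argument.
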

\begin{remark}[$\Omega_{\text{obs}} = \Omega$, steady \ocp s]
\label{remark_omega_u_s}Also for the steady case, we have chosen $\Omega_{\text{obs}}\neq \Omega$ {in compliance with the test cases that} we will present in Section \ref{results}. As we already did in the time dependent case, we can prove Lemma \ref{lemma_surj_s} and Lemma \ref{lemma_inj_s} also for an observation in the whole spatial domain and $\Omega_{u}\neq \Omega$.
\begin{itemize}
\item[{$\small{\circ}$}] \underline{Lemma \ref{lemma_surj_s}}. We can consider the element  $z = y + \kappa$ and $q = p$, where $\kappa$ is the solution of
\begin{equation*}
\begin{cases}
\label{kappa_s_u}
\displaystyle a(r, \kappa;\bmu) = - m(r, p; \bmu) + \frac{1}{\alpha} c(p, r; \bmu)  & \forall r \in L^2(\Omega), \\
\kappa \equiv 0 & \text{in } \Omega_{u},
\end{cases}
\end{equation*}
for a given $\bmu \in \Cal P$ and $p \in Y$. Thanks to the properties of continuity and coercivity of the bilinear form considered, we have that
$\displaystyle \norm{\kappa}_Y \leq \frac{c_m(\bmu)c_{\text{obs}}}{\gamma_a(\bmu)} \norm{y}_Y$, following the same arguments Lemma \ref{lemma_surj_s}. Thus,
the relation \eqref{surj_s} is still verified with
\begin{equation*}
\beta_s(\bmu) =
\frac{\gamma_a(\bmu)}{\sqrt{\left (2\max \left \{1, \left ( \frac{c_m(\bmu)c_{\text{obs}}}{\alpha\gamma_a(\bmu)} \right )^2 \right \} \right )}} > 0,
\end{equation*}
since relation \eqref{kappa_remark} also holds in this case.
\item[{$\small{\circ}$}] \underline{Lemma \ref{lemma_inj_s}}. To prove \eqref{inj_s} we simply consider $y = 0$ and $p = q\chi_{\overline \Omega \setminus \Omega_u}$.
\end{itemize}
\end{remark}
The analysis we made had the only purpose to prove the well-posedness of the problem at hand. In other words, we now can think about a discretization in order to simulate {the optimality system} for a given parameter $\bmu \in \Cal P$. In the next Section, we will deal with the concept of \emph{high-fidelity} approximation of the system, adapting the space-time formulation proposed in \cite{Glas2017, HinzeStokes, hinze2008optimization, HinzeNS, urban2012new, yano2014space, yano2014space1} to the no-control saddle point structure of \cite{Langer2020}.

\section{Space-Time discretization: the High Fidelity problem}
\label{FEM}
This Section deals with the discretization of the optimality system \eqref{global}. The space-time approximation is a very intuitive approximation strategy already employed in several works, see e.g. \cite{Glas2017, HinzeStokes, hinze2008optimization, HinzeNS, Langer2020, yano2014space, yano2014space1, urban2012new}. First of all, we prove the well-posedness of the discretized problem indipendently from the discretization in time. With respect to space discretization,  we will employ the Finite Element (FE) approximation.
Then, we will present the algebraic structure of the all-at-once space-time problem for time dependent and for steady governing equations. The whole framework will be denoted as \emph{high-fidelity approximation}, in contrast with the \emph{reduced approximation}, which will be treated in Section \ref{sec_ROM}.

\subsection{{Space-time discretization} and well-posedness}
To solve the discretized version of \eqref{global}, first of all we define the FE function space $Y^{N_h} =
 Y \cap\Cal X_{1}$, where
$$
 \mathcal X_1 = \{ s \in C^0(\overline \Omega) \; : \; s |_{K} \in \mbb P^1, \; \; \forall K \in \Cal T  \},
$$
where $K$ is an element of a triangularization $\Cal T$ of the spatial domain $\Omega$ and $\mathbb P^1$ is the space of polynomials of degree at most one.  We can now define the {semi-discrete} function spaces
 $ \Cal Y^{N_h} = \Big \{
y \in L^2(0,T; Y^{N_h}) \;\; \text{s.t.} \;\;  \dt{y} \in L^2(0,T; {(Y^{N_h})}^{\ast})\Big \}$
and  $\Cal Q^{N_h} = L^2(0,T; Y^{N_h})$.
 {After a further discretization of the time interval $[0,T]$ (more details will be provided in the next subsection), the corresponding space-time discrete spaces are denoted by $\Cal Y^{N_h}_{N_t}$ and $\Cal Q^{N_h}_{N_t}$, respectively, where $N_t$ denotes the cardinality of the time discretization. We remark that}, regardless of the time discretizion scheme, at the discrete level $\Cal Y^{N_h}_{N_t} = \Cal Q^{N_h}_{N_t}$ Indeed, it is clear that  $\Cal Y^{N_h}_{N_t} \subset \Cal Q^{N_h}_{N_t}$, by definition. {Moreover}, it is also straightforward to show that $\Cal Q^{N_h}_{N_t} \subset \Cal Y_{N_t}^{N_h}$: for $y \in \Cal Q^{N_h}_{N_t}$, its time derivative will be approximated by composition of functions in $Y^{N_h}$ (i.e. solutions at different time steps). This leads to $ \displaystyle \dt{y} \in \Cal Q^{N_h}_{N_t} \subset L^2(0,T; {(Y^{N_h})}^{\ast})_{N_t}$, exploiting
$$
Y^{N_h} \hookrightarrow  Y \hookrightarrow H \hookrightarrow Y\dual \hookrightarrow  (Y^{N_h})\dual.
$$
Namely, $\Cal Q^{N_h}_{N_t} \subset \Cal Y_{N_t}^{N_h}$, then $\Cal Y^{N_h}_{N_t} = \Cal Q_{N_t}^{N_h}$. \\
For the sake of clarity, from now on, we will consider the space-time discretized space $\Cal Q \disc := \Cal Q_{N_t}^{N_h} = \Cal Y\disc := \Cal Y^{N_h}_{N_t}$, of dimension $\Cal N = N_h \cdot N_t$. Furthermore, we will use the same space also for the adjoint variable. In other words, there is no difference between space-time state and adjoint space and $\Cal Q \disc$. The discretized problem is: given $\bmu \in \Cal P$ and observation $y_{\text{d}}\disc\in \Cal Q \disc$, find the pair $(y\disc, q\disc) \in \Cal Q \disc \times \Cal Q \disc$ such that

\begin{equation}
\label{global_FE}
\Cal B((y\disc, p\disc), (z, q); \bmu) =\big \la \Cal F(\bmu), (z, q)\big \ra \quad \forall (z, q) \in \Cal Q\disc \times \Cal Q\disc.
\end{equation}
We now prove the well-posedness of the space-time problem \eqref{global_FE} {which, as in the continous case, requires an application of the} Ne\v{c}as-Babu\v{s}ka theorem {by assuring both a discrete} inf-sup stability condition in the discretized space $\Cal Q \disc$ and a continuity property {of the discrete system. Since the latter is directly inherited from the continuous system\footnote{Since $\Cal Y \disc = \Cal Q \disc$, the norms $\norm{\cdot}_{\Cal Y}$ and  $\norm{\cdot}_{\Cal Q}$ are equivalent.}, in the following Lemma we focus on proving} the inf-sup stability condition with respect to the discretized spaces.
\begin{lemma}[{Discrete} Surjectivity of $\Cal B \dual$] The bilinear form \eqref{global_FE} {satisfies} the following inf-sup stability condition: {there exists $\beta\disc(\bmu) > 0$ such that}
\label{lemma_surj_FE}
\begin{equation}
\label{surj_FE}
\beta_{\Cal B}\disc(\bmu) \eqdot \inf_{(y,p) \in (\Cal Q \disc\times \Cal Q\disc)\setminus \{(0,0)\}}\sup_{(z,q) \in  (\Cal Q\disc \times \Cal Q\disc)\setminus \{(0,0)\}} \frac{\Cal B((y,p), (z, q); \bmu)}
{ \sqrt{\norm{y}_{\Cal Q}^2 + \norm{p}_{\Cal Q}^2}\sqrt{\norm{z}_{\Cal Q}^2 + \norm{q}_{\Cal Q}^2}} \geq \beta\disc(\bmu) > 0.
\end{equation}
\end{lemma}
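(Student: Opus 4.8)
The plan is to reproduce the argument of Lemma~\ref{lemma_surj} at the discrete level, the essential new ingredient being the identity $\Cal Y\disc = \Cal Q\disc$ established above. In the continuous setting the trial space $\Cal Y_0 \times \Cal Y_T$ and the test space $\Cal Q \times \Cal Q$ differ, and the supremizers built in Lemma~\ref{lemma_surj} belong to $\Cal Q$ by construction; at the discrete level I must instead certify that the \emph{same} supremizers lie in $\Cal Q\disc$. This is exactly what $\Cal Y\disc = \Cal Q\disc$ provides, and it is the reason the discrete inf-sup constant does not degrade with respect to the continuous one.

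First I would fix $0 \neq (y,p) \in \Cal Q\disc \times \Cal Q\disc$ and set $z_y = (D_a(\bmu)\dual)^{-1}\dt{y}$ and $q_p = -D_a(\bmu)^{-1}\dt{p}$, where $D_a(\bmu)$ now denotes the Galerkin operator on $Y^{N_h}$. Since $y,p \in \Cal Q\disc = \Cal Y\disc$, their discrete time derivatives again belong to $\Cal Q\disc$ by the composition-of-functions argument used above to prove $\Cal Q\disc \subset \Cal Y\disc$, while $D_a(\bmu)^{-1}$ maps into $Y^{N_h}$ at each time level; hence $z_y, q_p \in \Cal Q\disc$. With admissible supremizers in hand, I would then replay the chain of estimates of Lemma~\ref{lemma_surj}: in Case~1 ($\Omega_u = \Omega_{\text{obs}}$) I test against $(\alpha c_p y + c_{z_y} z_y,\, c_p p + q_p)$ and invoke the coercivity of $a(\cdot,\cdot;\bmu)$, the identities \eqref{a_z_y}--\eqref{a_q_p}, the inequalities \eqref{s_z_y}--\eqref{s_q_p}, Young's inequality with the same constants, and \eqref{D_continuity} together with the discrete spatial inf-sup $\beta_a\disc(\bmu)$ to control the denominator; in Case~2 ($\Omega_u \neq \Omega_{\text{obs}}$) I introduce the discrete auxiliary field $\kappa\disc \in \Cal Q\disc$ solving the finite-dimensional counterpart of \eqref{auxiliary_kappa}, whose well-posedness follows from the coercivity of $a$ on $\Omega \setminus \Omega_{\text{obs}}$, and conclude as before. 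Since none of the estimates depends on $(y,p)$, this yields $\beta\disc(\bmu) > 0$ of the same algebraic form as the constants derived in Lemma~\ref{lemma_surj}, with $\beta_a(\bmu)$ replaced by $\beta_a\disc(\bmu)$.

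The main obstacle lives entirely in the supremizer construction and is twofold. The first point, admissibility of $z_y, q_p$ and $\kappa\disc$, is settled by $\Cal Y\disc = \Cal Q\disc$ as explained. The second and more delicate point is the treatment of the temporal endpoint contributions generated by \eqref{s_T}: in Lemma~\ref{lemma_surj} these are rendered nonnegative by the conditions $y(0)=0$ and $p(T)=0$ built into $\Cal Y_0$ and $\Cal Y_T$, so I would need to verify that the all-at-once discrete trial space retains these endpoint conditions (as is natural in a formulation incorporating the initial and terminal data), so that the boundary terms keep the favourable sign and the argument closes. Once \eqref{surj_FE} is established, the analogous discrete injectivity estimate follows by mirroring Lemma~\ref{lemma_inj}, and the Ne\v{c}as--Babu\v{s}ka theorem delivers the well-posedness of \eqref{global_FE}.
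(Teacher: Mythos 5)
Your proposal would prove the statement, but it takes a genuinely different and considerably heavier route than the paper. The paper's proof never constructs the supremizers $z_y$, $q_p$ at all: it exploits the fact that, since $\Cal Y\disc = \Cal Q\disc$, the trial-side norm in \eqref{surj_FE} is just the $\Cal Q$-norm --- there is no time-derivative contribution $\norm{\partial_t y}_{Y\dual}$ that needs to be recovered --- so the (scaled) trial pair itself is an admissible and sufficient test function. In Case 1 ($\Omega_u = \Omega_{\text{obs}}$) the paper tests with $(z,q) = (\alpha y, p)$ and obtains $\beta\disc(\bmu) \geq \alpha\gamma_a(\bmu)$ in a few lines; in Case 2 it tests with $(y, p+\kappa)$, where $\kappa \in \Cal Q\disc$ solves the discrete analogue of \eqref{auxiliary_kappa} with $c_y = 1$, and concludes via \eqref{leq_kappa} with the constant $\gamma_a(\bmu)/\sqrt{2\max\{1,\bar c^{\,2}\}}$, $\bar c = c_c(\bmu)c_u/(\alpha\gamma_a(\bmu))$. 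This is not merely an economy of effort: these simple, explicitly computable constants are precisely the lower bounds $\beta^{LB}(\bmu)$ of Theorem \ref{th_lb} on which the cheap a posteriori estimator rests, whereas your replay of Lemma \ref{lemma_surj} would yield the involved continuous constant (depending on $c_m(\bmu)$, $c_S(\bmu)$, $c_a(\bmu)$, $\beta_a\disc(\bmu)$, \dots), which is harder to evaluate and not the quantity used downstream. Your identification of the ``main obstacle'' therefore misreads the role of $\Cal Y\disc = \Cal Q\disc$: its purpose is not to legitimize the supremizers but to remove the need for them, because the discrete inf-sup is measured in the weaker $\Cal Q$-norm on the trial side. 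Your argument is nonetheless not wrong --- a bound against $\sqrt{\norm{y}_{\Cal Y_0}^2 + \norm{p}_{\Cal Y_T}^2}$ dominates one against $\sqrt{\norm{y}_{\Cal Q}^2 + \norm{p}_{\Cal Q}^2}$, so it implies \eqref{surj_FE} --- but it requires extra discrete machinery you leave unverified, namely the operator identities \eqref{a_z_y}--\eqref{a_q_p} for the Galerkin operator on $Y^{N_h}$ and a discrete (backward-Euler) integration-by-parts replacing \eqref{s_T}. Your concern about the endpoint conditions $y(0)=0$, $p(T)=0$ being retained by the discrete trial space is legitimate, but it is a shared implicit assumption: the paper's proof invokes \eqref{s_T} at the discrete level in exactly the same way.
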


\begin{proof}
\underline{Case 1}. First of all, we consider the case $\Omega_u = \Omega_{\text{obs}}$.
In this case choosing
$(z,q) \equiv (\alpha y,p)$ and applying the usual relation $c(p, y; \bmu) = m(y, p; \bmu)$, \eqref{s_T} and the coercivity of the state equation, we obtain
\begin{align*}
\label{u_equal_obs}
\nonumber
\sup_{(z,q) \in (\Cal Q \disc \times \Cal Q \disc)\setminus \{(0,0)\}}
\frac{\Cal B((y,p), (z,q); \bmu)}{ \sqrt{\norm{y}_{\Cal Q}^2 + \norm{p}_{\Cal Q}^2}\sqrt{\norm{z}_{\Cal Q}^2 + \norm{q}_{\Cal Q}^2}}
& \geq \frac{\Cal B((y,p), (\alpha y,p); \bmu)}{ \sqrt{\norm{y}_{\Cal Q}^2 + \norm{p}_{\Cal Q}^2}\sqrt{\norm{\alpha y}_{\Cal Q}^2 + \norm{q}_{\Cal Q}^2}}\\
& = \frac{ \alpha c_S(\bmu)\norm{y(T)}^2_H + \alpha \gamma_a(\bmu)\norm{y}_{\Cal Q}^2 + c_S(\bmu)\norm{p(0)}^2_H + \gamma(\bmu) \norm{p}_{\Cal Q}^2}{\norm{y}_{\Cal Q}^2 + \norm{p}_{\Cal Q}^2}\\
& \geq \frac{\min \{\alpha \gamma_a(\bmu), \gamma_a(\bmu) \} (\norm{y}_{\Cal Q}^2 + \norm{p}_{\Cal Q}^2)}{\norm{y}_{\Cal Q}^2 + \norm{p}_{\Cal Q}^2} \\ \nonumber
& \geq \alpha \gamma_a(\bmu).
\end{align*}
\underline{Case 2. }We now deal with the case $\Omega_u \neq \Omega_{\text{obs}}$. Furthermore, without loss of generality, we assume $\Omega_{\text{obs}}$ a proper subset of the whole spatial domain\footnote{Also for the FE case the arguments of Footnote \ref{foot_1} and Remark \ref{remark_omega_u} hold. }. In this case we consider $\kappa \in \Cal Y\disc = \Cal Q \disc$, solution to problem \eqref{auxiliary_kappa} with $c_y = 1$. Choosing $(z,q) = (y, p + \kappa)$ and {recalling} properties \eqref{w_kappa}, the following holds:
\begin{align*}
\nonumber
\sup_{(z,q) \in (\Cal Q \disc \times \Cal Q \disc)\setminus \{(0,0)\}}
\frac{\Cal B((y,p), (z,q); \bmu)}{ \sqrt{\norm{y}_{\Cal Q}^2 + \norm{p}_{\Cal Q}^2}\sqrt{\norm{z}_{\Cal Q}^2 + \norm{q}_{\Cal Q}^2}}
&  \geq \frac{\Cal B((y,p), (y, p + \kappa); \bmu)}{ \sqrt{\norm{y}_{\Cal Q}^2 + \norm{p}_{\Cal Q}^2}\sqrt{\norm{y}_{\Cal Q}^2 + 2(\norm{p}_{\Cal Q}^2 + \norm{\kappa}_{\Cal Q}^2})} \\
& = \frac{ c_S(\bmu)\norm{y(T)}^2_H +  \gamma_a(\bmu)\norm{y}_{\Cal Q}^2 + c_S(\bmu)\norm{p(0)}^2_H + \gamma(\bmu) \norm{p}_{\Cal Q}^2}{ \sqrt{2}\sqrt{\norm{y}_{\Cal Q}^2 + \norm{p}_{\Cal Q}^2}\sqrt{\norm{y}_{\Cal Q}^2 + \norm{p}_{\Cal Q}^2 + \norm{\kappa}_{\Cal Q}^2}} \\
& \quad +  \frac{ \displaystyle \intTime{m(y,p;\bmu)} - \frac{1}{\alpha}\intTime{ c(p, y; \bmu)}}{ \sqrt{2}\sqrt{\norm{y}_{\Cal Q}^2 + \norm{p}_{\Cal Q}^2}\sqrt{\norm{y}_{\Cal Q}^2 + \norm{p}_{\Cal Q}^2 + \norm{\kappa}_{\Cal Q}^2}} \\
&  \quad + \frac{ \displaystyle  \intTime{m(y, \kappa; \bmu)} + \intTime{s(\kappa, p; \bmu)} + \intTime{a(\kappa, p; \bmu)}}{ \sqrt{2}\sqrt{\norm{y}_{\Cal Q}^2 + \norm{p}_{\Cal Q}^2}\sqrt{\norm{y}_{\Cal Q}^2 + \norm{p}_{\Cal Q}^2 + \norm{\kappa}_{\Cal Q}^2}} \\
&\geq  \frac{ \gamma_a(\bmu)(\norm{y}_{\Cal Q}^2 + \norm{p}_{\Cal Q}^2)}{ \sqrt{2}\sqrt{\norm{y}_{\Cal Q}^2 + \norm{p}_{\Cal Q}^2}\sqrt{\norm{y}_{\Cal Q}^2 + \norm{p}_{\Cal Q}^2 + \norm{\kappa}_{\Cal Q}^2}}.
\end{align*}
To complete the proof, we need the following upper bound for some $\bar c > 0$:
\begin{equation*}
 \norm{\kappa}_{\Cal Q} \leq \bar c \norm{y}_{\Cal Q}.
\end{equation*}
{Towards this goal}, we can use relation \eqref{leq_kappa}, with $c_y = 1$ leading to $\displaystyle \bar c = \frac{c_c(\bmu)c_u}{\alpha \gamma_a(\bmu)}$.
Finally, this {results in} the following estimate:
\begin{equation*}
\label{last_u_not_obs}
\sup_{(z,q) \in (\Cal Q \disc \times \Cal Q \disc)\setminus \{(0,0)\}}
\frac{\Cal B((y,p), (z,q); \bmu)}{ \sqrt{\norm{y}_{\Cal Q}^2 + \norm{p}_{\Cal Q}^2}\sqrt{\norm{z}_{\Cal Q}^2 + \norm{q}_{\Cal Q}^2}}
\geq \frac{\gamma_a(\bmu)}{\sqrt{2(\max\{ 1, \bar c ^2\})}}.
\end{equation*}
Since the proved relations does not depend on the choice of $(y,p) \in \Cal Q \times \Cal Q$, then the inf-sup condition \eqref{surj_FE} for both the cases.
\end{proof}
We now have all the ingredients to prove the well-posedness of the space-time discretized problem \eqref{global_FE}, indeed, the finite dimensional Ne\v cas-Babu\v ska holds due to continuity of \eqref{global_FE} and Lemma \ref{lemma_surj_FE}. We remark that there is no need to prove a finite dimensional {equivalent to} \eqref{inj}  (the interested reader may refere to \cite{Babuska1971,xu2003some}), since at the discrete level \eqref{surj_FE} is equivalent to
\begin{equation*}
 \inf_{(z,q) \in (\Cal Q \disc\times \Cal Q\disc)\setminus \{(0,0)\}}\sup_{(y,p) \in  (\Cal Q\disc \times \Cal Q\disc)\setminus \{(0,0)\}} \frac{\Cal B((y,p), (z, q); \bmu)}
{ \sqrt{\norm{y}_{\Cal Q}^2 + \norm{p}_{\Cal Q}^2}\sqrt{\norm{z}_{\Cal Q}^2 + \norm{q}_{\Cal Q}^2}}.
\end{equation*}
As a conseguence we can state the following theorem:
\begin{theorem}
\label{theorem_right}
For a given $\bmu \in \Cal P$ and observation $y_{\text{d}} \in \mathcal Y_{\text{obs}}\disc$, the problem \eqref{global_FE} has a unique solution in $(y, p) \in \Cal Q\disc \times \Cal Q \disc$.
\end{theorem}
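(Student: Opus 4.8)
The plan is to invoke the finite-dimensional Ne\v{c}as-Babu\v{s}ka theorem on the pair of (identical) trial and test spaces $\Cal Q\disc \times \Cal Q\disc$. In this finite-dimensional setting the theorem requires only two ingredients: the continuity of the bilinear form $\Cal B(\cdot, \cdot; \bmu)$ and a single inf-sup (surjectivity) condition. The crucial structural observation, which I would emphasize at the outset, is that here the trial space and the test space coincide, both being $\Cal Q\disc \times \Cal Q\disc$ of dimension $2\Cal N$; consequently the linear system associated with $\Cal B$ is represented by a \emph{square} matrix, and for square matrices injectivity, surjectivity, and bijectivity are equivalent. This is precisely why, unlike in the continuous case, no separate analogue of the injectivity Lemma \ref{lemma_inj} is needed, as already remarked before the statement.

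First I would record the continuity of $\Cal B(\cdot, \cdot; \bmu)$ on $\Cal Q\disc \times \Cal Q\disc$. Since $\Cal Q\disc = \Cal Y\disc$ is a finite-dimensional subspace of the continuous spaces, the norms $\norm{\cdot}_{\Cal Y}$ and $\norm{\cdot}_{\Cal Q}$ are equivalent on it, so the continuity estimate \eqref{continuity_B} established at the continuous level transfers directly, with a possibly modified constant. Likewise, $\Cal F(\bmu)$ is a linear continuous functional on $\Cal Q\disc \times \Cal Q\disc$. Next I would invoke Lemma \ref{lemma_surj_FE}, which furnishes the discrete inf-sup (surjectivity) condition \eqref{surj_FE} with a positive constant $\beta\disc(\bmu) > 0$.

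The heart of the argument is the passage from this single inf-sup condition to full well-posedness. Condition \eqref{surj_FE} states that the matrix $\msf B$ representing $\Cal B(\cdot, \cdot; \bmu)$ has trivial kernel, i.e.\ it is injective as a map on $\Cal Q\disc \times \Cal Q\disc$. Because $\msf B$ is square, injectivity forces surjectivity, so $\msf B$ is invertible; equivalently, the transposed inf-sup condition (the discrete counterpart of \eqref{inj}) holds automatically. With continuity, the inf-sup condition \eqref{surj_FE}, and this automatic non-degeneracy in hand, the Ne\v{c}as-Babu\v{s}ka theorem yields a unique $(y\disc, p\disc) \in \Cal Q\disc \times \Cal Q\disc$ solving \eqref{global_FE}. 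I do not expect a genuine obstacle here: the only point requiring care is the explicit use of the equal-dimension property $\Cal Y\disc = \Cal Q\disc$ to collapse the two inf-sup conditions into one, which is exactly the discrete feature the preceding subsection was at pains to establish.
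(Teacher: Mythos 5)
Your proposal is correct and follows essentially the same route as the paper: continuity inherited from the continuous level via the norm equivalence on $\Cal Q\disc = \Cal Y\disc$, the discrete inf-sup condition of Lemma \ref{lemma_surj_FE}, and the finite-dimensional (square-matrix) argument that injectivity implies bijectivity, so that no discrete analogue of \eqref{inj} is needed. The paper states exactly this, citing \cite{Babuska1971,xu2003some} for the equivalence of the two discrete inf-sup conditions, so nothing is missing from your argument.
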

\begin{remark}[Steady Case]
In the steady cases, we deal with a standard FE discretization, i.e. $\Cal N = N_h$. In this case, the optimization problem reads: given $\bmu \in \Cal P$, and an observation in
$Y_{\text{obs}}^{N_h}$, find the pair $(y,p) \in Y^{N_h} \times Y^{N_h}$ such that
\begin{equation}
\label{global_s_FE}
\Cal B_s((y,p), (z,q)) = \left \la \Cal F_s (\bmu), (z,q) \right \ra  \quad \forall (z,q) \in  Y^{N_h} \times Y^{N_h}.
\end{equation}
Also in this case we can prove the following theorem:
\begin{theorem}
\label{s_FE}
For a given $\bmu \in \Cal P$ and for a given observation $y_{\text{d}} \in Y_{\text{obs}}^{N_h}$, problem \eqref{global_s_FE} has a unique solution $(y,p) \in Y^{N_h} \times Y^{N_h}$.
\end{theorem}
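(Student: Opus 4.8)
The plan is to mirror, in the steady setting, the reasoning already carried out for the discrete time-dependent problem in Lemma \ref{lemma_surj_FE} and the discussion immediately following it. The decisive structural observation is that here the trial and test spaces coincide, both being the finite-dimensional space $Y^{N_h} \times Y^{N_h}$. Consequently, as in the remark after Lemma \ref{lemma_surj_FE}, it suffices to establish a single discrete inf-sup (surjectivity) condition: on a finite-dimensional square system the injectivity inequality corresponding to Lemma \ref{lemma_inj_s} is automatically equivalent to it, so no separate discrete injectivity proof is needed. Continuity of $\Cal B_s(\cdot, \cdot; \bmu)$ and boundedness of $\Cal F_s(\bmu)$ are inherited directly by restriction of the continuous bilinear form and functional to the subspace $Y^{N_h} \times Y^{N_h}$.

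First I would prove the discrete surjectivity inf-sup by reproducing the argument of Lemma \ref{lemma_surj_s} with all trial and test functions restricted to $Y^{N_h}$. In Case 1 ($\Omega_u = \Omega_{\text{obs}}$) the admissible choice $z = \alpha y$, $q = p$ stays in $Y^{N_h} \times Y^{N_h}$ whenever $(y,p) \in Y^{N_h}\times Y^{N_h}$, and since $c(p,y;\bmu)$ coincides with $m(y,p;\bmu)$ on the common domain, the coercivity estimate \eqref{estimate_1} holds verbatim with the discrete norms, giving the lower bound $\alpha\gamma_a(\bmu)$.

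The only genuine subtlety, and the step I expect to be the main obstacle, lies in Case 2 ($\Omega_u \neq \Omega_{\text{obs}}$, hence $\Omega_{\text{obs}} \neq \Omega$). There the corrector $\kappa$ must be taken as the finite element solution of the discrete counterpart of \eqref{kappa_s} in $Y^{N_h}$, subject to $\kappa \equiv 0$ on $\Omega_{\text{obs}}$, so that $q = p + \kappa$ remains in the discrete test space. Because $a(\cdot, \cdot; \bmu)$ retains its coercivity on the discrete subspace restricted to $\Omega \setminus \Omega_{\text{obs}}$, the discrete $\kappa$-problem is itself well-posed and the same a priori bound $\norm{\kappa}_Y \leq \frac{c_c(\bmu) c_u}{\alpha \gamma_a(\bmu)} \norm{y}_Y$ is recovered; the remainder of the estimate then proceeds exactly as in Lemma \ref{lemma_surj_s}, yielding the same explicit constant $\beta_s(\bmu)$. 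This is the steady analogue of the care taken for $\kappa \in \Cal Y\disc = \Cal Q\disc$ in Case 2 of Lemma \ref{lemma_surj_FE}.

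Once the discrete inf-sup condition and the inherited continuity are in hand, the finite-dimensional Ne\v{c}as-Babu\v{s}ka theorem applies and delivers existence and uniqueness of the solution pair $(y,p) \in Y^{N_h}\times Y^{N_h}$ to \eqref{global_s_FE}, completing the proof.
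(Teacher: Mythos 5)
Your proposal is correct and follows essentially the same route as the paper: continuity of $\Cal B_s(\cdot,\cdot;\bmu)$ and $\Cal F_s(\bmu)$ inherited by restriction, the discrete inf-sup condition obtained by repeating the arguments of Lemma \ref{lemma_surj_s} on $Y^{N_h}\times Y^{N_h}$, and the finite-dimensional Ne\v{c}as-Babu\v{s}ka theorem (with a single inf-sup sufficing since trial and test spaces coincide). The only difference is one of explicitness: the paper's proof is a one-liner invoking ``the same arguments of Lemma \ref{lemma_surj_s},'' whereas you spell out the one point that actually requires care, namely that in Case 2 the corrector $\kappa$ must be re-solved as the discrete counterpart of \eqref{kappa_s} in $Y^{N_h}$ so that $q = p + \kappa$ stays in the test space---exactly the device the paper itself uses in Case 2 of Lemma \ref{lemma_surj_FE}.
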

\begin{proof}
Once again, the statement is a simple consequence of the Ne\v cas-Babu\v ska theorem: indeed, the continuity is directly inherited from the continuous formulation of the bilinear forms at hand and, furthermore, we can use the same arguments of Lemma \ref{lemma_surj_s} in order to define
\begin{align*}
\label{surj_s_FE}
\beta_{{\Cal B}_s}^{N_h}(\bmu) \eqdot \inf_{(y, p) \in  (Y^{N_h}\times  Y^{N_h})\setminus \{(0,0)\}}\sup_{(z, q) \in ( Y^{N_h} \times  Y^{N_h})\setminus \{(0,0)\}} \frac{\Cal B_s((y,p), (z, q); \bmu)}
{ \sqrt{\norm{y}_{Y}^2 + \norm{p}_{Y}^2}\sqrt{\norm{z}_{Y}^2 + \norm{q}_{Y}^2}},\\
\end{align*}
and prove that $\beta_{{\Cal B}_s}^{N_h} \geq \beta_s^{N_h}(\bmu) > 0.$
\end{proof}
\end{remark}
To conclude, we recall that Remark \ref{remark_omega_u_s} is valid also at the high-fidelity level.
In the next Section we are going to analyse the algebraic structure {corresponding to the space-time discretization}.
\subsection{Space-Time All-At-Once Algebraic System}
We now specify the algebraic {formulation of the space-time discretization}. We recall that we used a space FE approximation using $\mathbb P^1-\mathbb P^1$ pair for state and adjoint variables. {For what concerns} time discretization,  we divide the time interval $[0,T]$ in $N_t$ equispaced subintervals of length $\Delta t$. With $t_k = k\Delta t$ for $k = 0, \dots, N_t$, we will refer to a generic time instance. As already specified, at each time we can consider the variables $y\disc_k$ and $p\disc_k$ in $Y^{N_h}$ {to be} represented through FE basis functions $\{\phi^i\}_{i=1}^{N_h}$ as follows
$$
y\disc_k = \sum_{1}^{N_h}y^{i}_k \phi^{i}, \hspace{1cm}
p\disc_k = \sum_{1}^{N_h}p^{i}_k \phi^{i}.
$$
We now define the space-time state and adjoint vectors  $\mathsf y = [\bar y_1, \dots, \bar y_{N_t}]^T$ and $\mathsf p = [\bar p_1, \dots, \bar p_{N_t}]^T$. In the same fashion, let $\mathsf y_0 = [\bar y_0, 0, \dots, 0]^T$, $\mathsf y_d = [\bar {y_d}_1, 0, \dots, \bar {y_d}_{N_t}]^T$ and $\mathsf f = [\bar f_1, \dots, \bar f_{N_t}]^T$ be the space-time vectors describing the state initial condition, the desired solution profile and the forcing term, respectively. In other words, $\bar y_k$ and $\bar p_k$ are the column vectors of the FE element coefficients of the state and adjoint variable  at time $t_k, k=1, \dots, N_{t}$. The same notation is used for all the known quantities, as the initial state condition, forcing term and observations. We aim to recast the optimality system \eqref{global_FE} in the classical saddle point structure already presented \cite{HinzeStokes, Stoll1, Stoll, Strazzullo2}, with the necessary modifications due to the elimination of the direct computation of the control  variable. \\
For the sake of clarity, we focus our attention to the state equation. {First}, we introduce $\mathsf S(\bmu)$ and $\mathsf D_a(\bmu)$, which are defined as $\mathsf S(\bmu)_{ij} = s(\phi_j, \phi_i; \bmu)$ and $\mathsf D_a(\bmu)_{ij}:= a(\phi_j, \phi_i; \bmu)$, for $i,j = 1, \dots, N_h$, respectively. For the sake of notation, a {subscript} will indicate the restriction of the matrix to the domain considered, $\mathsf M_{\Omega_{\text{obs}}(\bmu)}$ is the mass matrices restricted to the observation domain, for example. Furthermore with $\mathsf C_u(\bmu)$ we indicate the matrix which satisfies $\mathsf {C_{u}(\bmu)}_{ij} = c(\phi_j \chi_{\Omega_u}, \phi_i; \bmu)$, for $i,j = 1, \dots, N_h$. With respect to time we perform a backward Euler time discretization, moving forward in time.
If we analyse a single time instance, we have to solve
\begin{equation}
\mathsf S(\bmu) \bar y_k + \Delta t \mathsf D_a(\bmu) \bar y_k - \frac{\Delta t}{\alpha} \mathsf C_{u}(\bmu) \bar p_k= \mathsf S(\bmu) \bar y_{k-1}  + \bar f_k \Delta t
\spazio \text{for } 1 \leq k \leq N_t.
\end{equation}
{Thus}, the system to be solved is
\begin{equation}
\label{test}
\underbrace{
\begin{bmatrix}
\mathsf S(\bmu) + \Delta t \mathsf D_a(\bmu) &  & & \\
- \mathsf S(\bmu) & \mathsf S(\bmu) + \Delta t \mathsf D_a(\bmu) &  & \\
& - \mathsf S(\bmu) & \mathsf S(\bmu)+ \Delta t \mathsf D_a(\bmu) &  & \\
&  & \ddots & \ddots & \\
&  &  & -\mathsf S(\bmu) & \mathsf S(\bmu) + \Delta t \mathsf D_a(\bmu) \\
\end{bmatrix}
}_{\mathsf K(\bmu)}
\begin{bmatrix}
\bar y_1\\
\bar y_2\\
\bar y_3 \\
\vdots \\
\bar y_{N_t}
\end{bmatrix}
$$
$$
\qquad \qquad \qquad
\qquad \qquad \qquad
- \frac{\Delta t}{\alpha}
\begin{bmatrix}
\mathsf C_{u}(\bmu) &  & & \\
& \mathsf C_{u}(\bmu)   &  & \\
&  & \mathsf C_{u}(\bmu)   &  & \\
&  &  &  \ddots & \\
&  &  &  & \mathsf C_{u}(\bmu)  \\
\end{bmatrix}
\begin{bmatrix}
\bar p_1\\
\bar p_2\\
\bar p_3 \\
\vdots \\
\bar p_{N_t}
\end{bmatrix}
\hspace{-1mm}= \hspace{-1mm}
\begin{bmatrix}
\mathsf S(\bmu)\bar y_0 + \Delta t \bar f_1\\
0 + \Delta t  \bar f_2\\
0 + \Delta t  \bar f_3\\
\vdots \\
0 + \Delta t  \bar f_{N_t}
\end{bmatrix}.
\end{equation}
The space-time state equation can be written in compact form as
\begin{equation}
\mathsf K(\bmu) \mathsf  y - \frac{\Delta t}{\alpha} \mathsf C(\bmu) \mathsf p = \mathsf S(\bmu) \mathsf y_0 + \Delta t  \mathsf f,
\end{equation}
where $\mathsf C(\bmu)$ is the block-diagonal matrix which entries are given by $\mathsf C_u(\bmu)$ of dimension  $ \mathbb R^{N_h  \cdot N_t} \times \mathbb R^{N_h  \cdot  N_t}$. A similar argument can be applied in order to discretize the adjoint equation. Indeed, at each time instance it can be written as follows:
\begin{equation}
\mathsf S(\bmu) \bar p_{k-1}  = \mathsf S(\bmu) \bar p_k  + \Delta t ( - \mathsf M_{\text{obs}}(\bmu) \bar y_{k-1} - \mathsf D_a(\bmu)^T \bar p_{k-1} + \mathsf M_{\text{obs}}(\bmu) \bar y_{d_{k-1}} )
\spazio \text{for } 1< k \leq N_t.
\end{equation}
Namely, for the adjoint equation we perform a forward Euler method which, due to the backward parabolic equation, is equivalent to an implicit scheme. Once again, we focus on the algebraic all-at-once system for this specific equation, that has the following form:
\begin{equation*}
\underbrace{
\begin{bmatrix}
\mathsf S(\bmu)+ \Delta t \mathsf D_a(\bmu)^T & - \mathsf S(\bmu)& & \\
&\mathsf S(\bmu) + \Delta t \mathsf D_a(\bmu)^T & - \mathsf S(\bmu)  & \\
&  & \ddots & \ddots  & \\
&  &  & \mathsf S(\bmu) + \Delta t \mathsf D_a(\bmu)^T & - \mathsf S(\bmu) \\
&  &  &  & \mathsf S(\bmu) + \Delta t \mathsf D_a(\bmu)^T \\
\end{bmatrix}
}_{\mathsf K(\bmu)^T}
\begin{bmatrix}
\bar p_1\\
\bar p_2\\
\bar p_3 \\
\vdots \\
\bar p_{N_t}
\end{bmatrix}
$$
$$
+
\Delta t
\begin{bmatrix}
\mathsf M_{\text{obs}}(\bmu)&  & & \\
& \mathsf M_{\text{obs}}(\bmu)&  & \\
&  & \mathsf M_{\text{obs}}(\bmu) &  & \\
&  &  & \ddots & \\
&  &  &  & \mathsf M_{\text{obs}}(\bmu) \\
\end{bmatrix}
\begin{bmatrix}
\bar y_1\\
\bar y_2\\
\bar y_3 \\
\vdots \\
\bar y_{N_t}
\end{bmatrix}
=
\begin{bmatrix}
\Delta t \mathsf M_{\text{obs}}(\bmu)\bar y_{d_1} \\
\Delta t \mathsf M_{\text{obs}}(\bmu)\bar y_{d_2} \\
\Delta t \mathsf M_{\text{obs}}(\bmu)\bar y_{d_3} \\
\vdots \\
\Delta t \mathsf M_{\text{obs}}(\bmu) \bar y_{d_{N_t}}
\end{bmatrix}.
\end{equation*}
\no Then, in a more compact notation, the adjoint system becomes:
\begin{equation}
\mathsf K(\bmu)^T \mathsf p + \Delta t \mathsf M(\bmu) \mathsf y = \Delta t \mathsf M(\bmu) \mathsf y_d,
\end{equation}
$\mathsf M(\bmu) \in \mathbb R^{N_h  \cdot N_t} \times \mathbb R^{N_h  \cdot  N_t}$ is the block-diagonal matrix which entries are given by $\mathsf M_{\text{obs}}(\bmu)$.
If we combine the information given by the two equations, we  end up with the following all-at-once system:
\begin{equation}
\label{one_shot}
\begin{bmatrix}
\Delta t  \mathsf M(\bmu) &  \mathsf {K}(\bmu)^T  \\
\mathsf K(\bmu) & -\frac{\Delta t}{\alpha} \mathsf C(\bmu) \\
\end{bmatrix}
\begin{bmatrix}
\mathsf y \\
\mathsf p \\
\end{bmatrix}
=
\begin{bmatrix}
\Delta t \mathsf {M(\bmu)} \mathsf y_d \\
\mathsf {S(\bmu)} \mathsf y_0 + \Delta t \mathsf f \\
\end{bmatrix}.
\end{equation}

It is well-known in literature that optimal control problems solved through Lagrangian formulation result in saddle point structures, see e.g \cite{bader2015certified,dede2010reduced,gerner2012certified,karcher2014certified,karcher2018certified,kunisch2008proper,negri2015reduced,negri2013reduced} for steady problems and for time dependent cases, see e.g. \cite{ HinzeStokes, hinze2008optimization, HinzeNS, Stoll1, Stoll, Strazzullo2}. Also in the no-control framework, we see that the structure is preserved. \\
In this Section we have shown that in order to solve the all-at-once space-time optimization in a parametrized setting, for a given $\boldsymbol \bmu \in \Cal P$, we have to deal with a system which global dimension is $2\Cal N$. If we consider a context where the optimal solution is studied for several parameters, the {required} computational resources grows and {solving} the problem can {result in an unbearable amount of time}, due to the high dimensionality of discretized space-time structure presented, which drastically increases if mesh refinement has to be performed in space and/or in time. \\
In the next Section, in order to manage the issue of the high computational costs, we propose {to use the certified} reduced basis (RB) method as a strategy to reliably solve \ocp s in a smaller amount of time. After a general introduction of the basic ideas behind RB methods, we propose a certified error estimator specifically built for such parabolic optimal control problems in order to apply a greedy algorithm in the reduction  procedure.

\section{RB for parabolic time dependent \ocp s}
\label{sec_ROM}
This Section describes reduced approximation for time dependent \ocp s exploiting a greedy algorithm as presented in
\cite{hesthaven2015certified, RozzaHuynhPatera2008}. First of all, we will introduce the main framework of RB and its applicability thanks to the affine assumption. Then we will provide a new error estimator suited for both time dependent and steady problems. Finally, we will briefly present the reduced saddle point optimization problem and the \emph{aggregated space strategy}  following the approach {already} exploited in {previous} literature, see e.g. \cite{bader2015certified,dede2010reduced,gerner2012certified,karcher2014certified,karcher2018certified,kunisch2008proper,negri2015reduced,negri2013reduced}.
All the concepts will be presented for the time dependent scenario: indeed, the steady case complies with the more general formulation.
\subsection{Reduced Problem Formulation}
\label{off_on}
In Section \ref{problem}, we presented linear quadratic time dependent \ocp s in no-control framework \eqref{global}. For the sake of clarity, in this Section we will always  make the parameter dependence explicit: this will help us in being clearer in describing the RB fundamentals. \\
First of all, we define the \textit{solution manifold}
$$
\mathcal M = \{ (y(\boldsymbol{\mu}), p(\boldsymbol{\mu}))\;| \; \boldsymbol{\mu} \in \Cal P\},
$$
namely {containing the optimal solution to \eqref{global} when the parameter changes in $\Cal P$}. We assume that  $\Cal M$ is smooth in the parameter space $\Cal P$. Furthermore, we can consider the space-time discretized solution manifold, which is analogously defined as$$
\mathcal M \disc= \{ (y\disc(\boldsymbol{\mu}), p\disc(\boldsymbol{\mu}))\;| \; \boldsymbol{\mu} \in \Cal P\}.
$$
It is clear that, if the high-fidelity space-time discretization is refined enough,  the manifold solution $\mathcal M\disc$ is a reliable discretized representation of $\mathcal M$. Our aim is to approximate the behaviour of $\mathcal M \disc$ through RB approach. Reduced strategies are based on the construction of a surrogate space $\Cal Q_N \times \Cal Q_N \subset \Cal Q \disc \times \Cal Q \disc \subset \Cal Y_0 \times \Cal Y_T$, which has the property of low-dimensionality. The main feature of  the reduced space is that it is spanned by some properly chosen \emph{snapshots}, i.e. high-fidelity solutions computed for some values of the parameter $\bmu \in \Cal P$. {Once the reduced spaces are built}, a Galerkin projection is performed in order to find the optimal solution for a given parameter realization.
In other words, {after} a possibly costly reduced space construction phase, the RB strategy allows to solve a low dimensional system at each new parametric instance, in a space of dimension $N \ll \Cal N$.
In our case, the reduced optimality system for a given observation reads: given $\bmu \in \Cal P$ find the optimal pair $(y_N(\bmu), p_N(\bmu)) \in \Cal Q_N \times \Cal Q_N$ such that
\begin{equation}
\label{RB_OCP}
\Cal B((y_N(\boldsymbol{\mu}), p_N(\boldsymbol{\mu})),(z, p); \bmu) =
\la \Cal F(\bmu), (z, q) \ra \qquad \forall (z, q) \in \Cal Q_N \times \Cal Q_N,
\end{equation}
It is clear that the presented approach is convenient only if you can solve \eqref{RB_OCP} rapidly and independently from the value $\Cal N$. Indeed, it is necessary to divide the {reduced} space construction and the solution process in two separate steps:
\begin{itemize}
\item[$\small{\circ}$]an \emph{offline phase} where the reduced spaces are built and stored. This part of the process can be possibly expensive, but it is performed only once.
\item[$\small{\circ}$] An \emph{online phase} which solves the reduced system at each new parameter evaluation. In other words, the Galerkin projection in \eqref{RB_OCP} can be performed for several values of $\bmu \in \Cal P$  in a small amount of time.
\end{itemize}
We now discuss an important assumption which will guarantee the efficient applicability of RB construction and solving phases: the \emph{affine decomposition} of the problem at hand. In other words, the forms we deal with can be written in the following way:
\begin{equation}
\begin{matrix}
&  \Cal B((y,p), (z,q); \boldsymbol{\mu}) =\displaystyle  \sum_{l=1}^{Q_\Cal{B}} \Theta_\Cal{B}^l(\boldsymbol{\mu})\Cal{B}^l((y,p),(z,q)), &
& \qquad  \la \Cal F(\boldsymbol{\mu}), (z,q) \ra =\displaystyle  \sum_{l=1}^{Q_{\Cal F}} \Theta_{\Cal F}^l(\boldsymbol{\mu})\la \Cal {F}^l, (z, q) \ra, \\
\end{matrix}
\end{equation}
for some integers $Q_\Cal{B}$ and $Q_{\Cal F}$, with $\Theta_\Cal{B}^l,$ and $\Theta_{\Cal{F}^l}$ $\boldsymbol{\mu}-$dependent smooth functions and $\Cal B^l$ and $ \Cal F^l$ bilinear form and functional depending on $\bmu$.
\\This assumption is necessary in order apply the already described offline-online concept. Indeed, this particular structure allows us to assemble and store all the $\bmu-$independent quantities in the offline stage, and to exploit them in the basis construction. Then, in the online phase, for a given parameter, all the $\bmu$-dependent quantities are computed and system \eqref{RB_OCP} is assembled and solved. {Thanks to} this process, the online phase guarantees a rapid study of several parametric instances. \\We still need to describe explicitly how to  construct the reduced function spaces: it will be the content of the next Section.

\subsection{Greedy Algorithm and space construction for OCP($\boldsymbol{\mu}$)s}

\label{Greedysec}
In this Section we present the algorithm we used in order to build the reduced spaces. We rely on \emph{greedy algorithm}, see \cite{buffa2012priori,hesthaven2015certified} for a general introduction. {For time dependent problems, the vast majority of the reduced basis literature relies on the POD-Greedy algorithm \cite{haasdonk2008reduced, hesthaven2015certified}, that is usually called upon to deal with the compression of solution instances coming from a time stepping scheme. Instead, in the present space-time formulation, we are going to extend what has been already done for the space-time formulation of} parabolic problems in \cite{urban2012new,yano2014space, yano2014space1} to time dependent linear parabolic \ocp s. \\
The greedy approach is an iterative technique based on the idea to add new information to the reduced spaces at each {enrichment step by adding a suitably} chosen snapshot. At each iteration a high fidelity solution of the optimality system \eqref{global_FE} (or \eqref{global_s_FE} for the steady case) is needed, i.e. in order to build a $N-$dimensional reduced space, $N$ space-time (or steady) solutions must be evaluated. Before describing the algorithm, let us define the global error $e$ between the continuous optimality system and the reduced one, i.e.
\begin{equation}
\label{global_error}
e := (y\disc - y_N, p\disc - p_N).
\end{equation}
In order to efficiently apply the greedy algorithm, we have to \emph{estimate} the norm of the global error \eqref{global_error} independently from the high fidelity dimension\footnote{We are assuming that the space-time discretization is a good approximation of the continuous solution, so that we can consider directly the quantity \eqref{global_error} not paying in accuracy with respect to the continuous model.}: in other words we need a quantity $\Delta_N(\bmu)$ that does not depend on $\Cal N$ such that
\begin{equation}
\label{estimation}
\norm{e}_{\Cal Q \times \Cal Q} \leq \Delta_N(\bmu).
\end{equation}
In the next Section we will provide an explicit expression for $\Delta_N(\bmu)$. For now, we just assume to already have such an estimator.
The first step is to choose a discrete subset of parameters $\mathcal P_h \subset \mathcal P$ of cardinality $N_{\text{max}} = |\mathcal P_h|$. When the finite parameter space $\mathcal P_h$ is large enough, a sampling over the solutions is a good representation of $\mathcal M \disc$.
Now we have all the ingredients in order to build our spaces. First of all, we fix a tolerance $\tau$ and we initialize the reduced spaces $\Cal Q_N^y = \text{span}\{y\disc(\bmu_0)\}$ and $\Cal Q_N^p = \text{span}\{p\disc(\bmu_0)\}$ for a initial $\bmu_0$.
The $n-$th step of the process we choose the parameter
\begin{equation}
\bmu_{n} = \arg \max_{\mu \in \Cal P_h} \Delta_N(\bmu),
\end{equation}
and then we enrich the spaces with the snapshots related to the new {selected} parameter, i.e. \\
$\Cal Q_N^y = \text{span}\{y\disc(\bmu_0), \dots, y\disc(\bmu_n)\}$ and \\ $\Cal Q_N^p = \text{span}\{p\N(\bmu_0)\dots, p\N(\bmu_n)\}$. We proceed with the iterations until the estimator for a picked parameter verifies $\Delta_N(\bmu)\leq \tau$. \\
At the end of the algorithm, we will be provided by two spaces which seperatly describe state and adjoint variables. We remark that in order to have the space-time formulation well-posed, the state and adjoint space must be discretized through the same technique. It is intuitive to assert that the Greedy procedure does not lead to the same space for state and adjoint, since the snapshots for those variables can span different function spaces. In order to guarantee the existence of a unique reduced optimal solution, we have to ensure the following reduced inf-sup stability condition
\begin{equation*}
\label{infsup_reduced}
\beta_N(\bmu) \eqdot \inf_{(y_N, p_N) \in \Cal Q_N \times \Cal Q_N \{(0, 0)\}} \sup_{(z_N, q_N) \in \Cal Q_N \times \Cal Q_N \{(0, 0)\}} \frac{\Cal B((y_N, p_N), (z_N, q_N); \bmu)}{\sqrt{\norm{y_N}_{\Cal Q}^2 + \norm{p_N}_{\Cal Q}^2} \sqrt{\norm{z_N}_{\Cal Q}^2 + \norm{q_N}_{\Cal Q}^2}}  > 0,
\end{equation*}
where $\Cal Q_N$ is {a space to be determined. In particular}, we employ the aggregated space technique, an approach exploited for saddle point problem arising in {PDE($\bmu$)-}constrained optimization \cite{bader2015certified,dede2010reduced,gerner2012certified,karcher2014certified,karcher2018certified,kunisch2008proper,negri2015reduced,negri2013reduced}. It consists in the definition of the reduced space
\begin{equation}
\Cal Q_N = \Cal Q_N^y \cup \Cal Q_N^p,
\end{equation}
that will be {employed} in the representation of both the state and the adjoint variables, i.e.
given $\bmu \in \Cal P$, $y_N(\bmu) \in \Cal Q_N$ and $p_N(\bmu) \in \Cal Q_N$.
Even if this strategy increases the dimension of the reduced problem from $2N$ to $ 4N$, for time consuming problems such as parametrized time-dependent \ocp s, it is still convenient solve several reduced problems {as opposed} to the space-time discretization counterpart.
The next Section presents a specific estimator  $\Delta_N(\bmu)$, which can be used both for time dependent and steady \ocp s.
\subsection{Rigorous a posteriori error estimate}
\label{error_estimate}
In the context of RB method, an a posteriori error estimate is needed in order to rely on an {reliable} reduction algorithm. Indeed, it guarantees:
\begin{itemize}
\item[\small $\circ$]a bound for the sampling strategy over the parametric space $\Cal P$ in the offline phase;
\item[\small $\circ$] for every $\bmu \in \Cal P$, it provides a bound to the error between reduced optimal solution and truth approximation in the online phase.
\end{itemize}
The main point of our analysis lays on an a posteriori error estimate which complies with the classical
Brezzi's and Ne{\v c}as-Babu{\v s}ka stability theories \cite{Babuska1971, boffi2013mixed, necas} which are well known in the context of steady \ocp s, e.g. see \cite{negri2015reduced, negri2013reduced}. \\
As already specified, our aim is to build a posteriori error bound $\Delta_N(\bmu)$ which is $\Cal N-$indipendent and verifies:
\begin{equation}
\norm{e}_{\Cal Q \times \Cal Q} = \sqrt{\norm{ y\disc  - y_N}_{\Cal Q}^2 + \norm{ p\disc - p_N}_{\Cal Q}^2}
\leq \Delta_N(\bmu),
\end{equation}
so that the error bound is fast to compute. \\
At the space-time {discrete} level, we have {that the inf-sup condition \eqref{surj_FE} is} verified and, {after application of} the Ne\v cas- Babu\v ska theorem, we have the following stability estimate on the solution:
\begin{equation}
\label{stability_solution}
\sqrt{\norm{y \disc}^2_{\mathcal Q} + \norm{p\disc}^2_{\mathcal Q} }\leq \frac{1}{\beta_{\Cal B}\disc(\bmu)}\norm{\Cal F(\bmu)}_{(\Cal Q \times \Cal Q)\dual.}
\end{equation}
As we will briefly see, the discrete inf-sup constant $\beta_{\Cal B} \disc(\bmu)$ plays a very important role in the {construction} of the a posteriori error estimate. \\
Another essential element in the definition of the error estimator is the computation of the dual norm of the residual $\Cal R  \in  {(\Cal Q \disc \times \Cal Q \disc)}\dual$
\begin{equation}
\Cal R((z, q); \bmu) = \Cal B((y_N, q_N), (z, q); \bmu) - \la \Cal F(\bmu), (z, q) \ra
\spazio \forall (z, q) \in {\Cal Q \disc \times \Cal Q \disc}.
\end{equation}
Now by definition and by the stability estimate of the Ne\v cas- Babu\v ska thereom, one can easily prove the following inequality:
$$
\norm{e}_{\Cal Q \times \Cal Q} \leq \frac{\norm{\Cal R}_{(\Cal Q \times \Cal Q)\dual}}{\beta_{\Cal B} \disc(\bmu)} \spazio \forall \bmu \in \Cal P,
$$
since it is easy to show that
\begin{equation}
\label{B_cal_e}
\Cal B(e, (z, q); \bmu) = \Cal R((z, q); \bmu).
\end{equation}
The main {issue in the practical application of the} estimate is to find an effective way to compute the inf-sup constant $\beta_{\Cal B}\disc(\bmu)$. Let us suppose to have at one's disposal a lover bound $\beta^{LB}(\bmu)>0$ such that
$\beta_{\Cal B}\disc(\bmu) \geq \beta^{LB}(\bmu)$. Then the error estimate becomes:
\begin{equation}
\label{error}
\norm{e}_{\Cal Q \times \Cal Q} \leq \frac{\norm{\Cal R}_{(\Cal Q \times \Cal Q)\dual}} {\beta^{LB}(\bmu)}
\eqdot \Delta_N(\bmu)  \spazio \forall \bmu \in \Cal P, \spazio \forall N = 1, \dots, N_{\text{max}}.
\end{equation}
We now will give an analytical formulation to the lower bound $\beta^{LB}(\bmu)$.
\begin{theorem}
\label{th_lb}
Let us suppose that a space-time \ocp $\;$ is well-posed. Then we obtain the a lower bound for $\beta^{\Cal N}_{\Cal B} (\bmu)$ of the form:
\begin{equation}
\label{bounds}
\beta^{LB}(\bmu) =
\begin{cases}
\displaystyle \alpha \gamma_a(\bmu) & \quad \text{for } \quad \Omega_u = \Omega_{\text{obs}}, \\
\displaystyle \frac{\gamma_a(\bmu) }{\sqrt{2\max\left \{1,\left(\frac{c_c(\bmu)c_u}{\alpha \gamma_a(\bmu)}\right)^2\right \} }} & \quad \Omega_u \neq \Omega_{\text{obs}} \text{ assuming } \Omega_{\text{obs}} \neq \Omega,\\
\displaystyle \frac{\gamma_a(\bmu) }{\sqrt{2\max\left \{1,\left (\frac{c_m(\bmu)c_{\text{obs}}}{\alpha \gamma_a(\bmu)}\right)^2\right \} }}& \quad \Omega_u \neq \Omega_{\text{obs}} \text{ assuming } \Omega_{u} \neq \Omega.\\
\end{cases}
\end{equation}
\end{theorem}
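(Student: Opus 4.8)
The plan is to recognize that this theorem is not a fundamentally new estimate but a consolidation of the explicit constants already produced by the discrete surjectivity analysis. Since the space-time \ocp{} is assumed well-posed, Lemma \ref{lemma_surj_FE} guarantees the discrete inf-sup condition \eqref{surj_FE}, namely $\beta_{\Cal B}\disc(\bmu) \geq \beta\disc(\bmu) > 0$ with an explicit, $\Cal N$-independent $\beta\disc(\bmu)$. Hence it suffices to read off the value of $\beta\disc(\bmu)$ in each admissible geometric configuration and to set $\beta^{LB}(\bmu) \eqdot \beta\disc(\bmu)$; any such quantity automatically satisfies $\beta_{\Cal B}\disc(\bmu) \geq \beta^{LB}(\bmu)$, which is precisely what the a posteriori estimate \eqref{error} requires.

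First I would treat the case $\Omega_u = \Omega_{\text{obs}}$, corresponding to Case 1 of Lemma \ref{lemma_surj_FE}. Here the test pair $(z,q) = (\alpha y, p)$ is admissible in $\Cal Q\disc \times \Cal Q\disc$, and using the identity $c(p, y; \bmu) = m(y, p; \bmu)$ (valid because observation and control domains coincide), the time identity \eqref{s_T}, and the coercivity constant $\gamma_a(\bmu)$ of $a(\cdot,\cdot;\bmu)$, the Rayleigh quotient in \eqref{surj_FE} is bounded below by $\min\{\alpha\gamma_a(\bmu), \gamma_a(\bmu)\} = \alpha\gamma_a(\bmu)$, since $\alpha \leq 1$. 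This yields the first branch directly.

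Next I would treat the two genuinely different-domain cases via the auxiliary functions $\kappa$. For $\Omega_u \neq \Omega_{\text{obs}}$ with $\Omega_{\text{obs}} \neq \Omega$ (Case 2 of Lemma \ref{lemma_surj_FE}), I take $\kappa$ to solve \eqref{auxiliary_kappa} with $c_y = 1$ and test against $(z,q) = (y, p+\kappa)$; the cancellation \eqref{w_kappa} together with the a priori bound \eqref{leq_kappa}, which reads $\norm{\kappa}_{\Cal Q} \leq \bar c\,\norm{y}_{\Cal Q}$ with $\bar c = c_c(\bmu)c_u / (\alpha\gamma_a(\bmu))$, controls the denominator and produces the second branch $\gamma_a(\bmu) / \sqrt{2\max\{1, \bar c^2\}}$. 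The remaining case $\Omega_u \neq \Omega$ with $\Omega_{\text{obs}} = \Omega$ is handled by the discrete analogue of Remark \ref{remark_omega_u}: one uses the backward auxiliary problem \eqref{auxiliary_kappa_back} (with $\kappa$ vanishing on $\Omega_u$) and the corresponding estimate \eqref{kappa_remark}, namely $\gamma_a(\bmu)\norm{\kappa}_Y \leq c_m(\bmu)c_{\text{obs}}\norm{p}_Y$, which replaces $\bar c$ by $c_m(\bmu)c_{\text{obs}} / (\alpha\gamma_a(\bmu))$ and gives the third branch.

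The step I expect to require the most care is not any single computation but the bookkeeping that certifies $\Cal N$-independence: every constant entering $\beta^{LB}(\bmu)$ (that is, $\gamma_a(\bmu)$, $c_c(\bmu)$, $c_m(\bmu)$, $c_u$, $c_{\text{obs}}$, and $\alpha$) is inherited from the continuous bilinear forms and is therefore uniform in the discretization parameter $\Cal N$, because the discrete spaces are conforming, $\Cal Y\disc = \Cal Q\disc$, and the coercivity and continuity of $a(\cdot,\cdot;\bmu)$ are preserved on the relevant subdomains. Once this is verified, $\beta^{LB}(\bmu)$ is a legitimate, inexpensively computable lower bound, and the a posteriori estimate \eqref{error} closes with an $\Cal N$-independent effectivity.
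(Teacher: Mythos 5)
Your proposal is correct and takes essentially the same route as the paper: its proof of Theorem \ref{th_lb} simply invokes Theorem \ref{theorem_right} (whose substance is Lemma \ref{lemma_surj_FE}) together with the stability estimate \eqref{stability_solution} applied to the error equation \eqref{B_cal_e}, and the three branches of \eqref{bounds} are precisely the explicit constants you read off from Case 1, Case 2, and the discrete analogue of Remark \ref{remark_omega_u}. The one blemish in your third case --- quoting \eqref{kappa_remark}, which yields $c_m(\bmu)c_{\text{obs}}/\gamma_a(\bmu)$, while writing the replacement constant as $c_m(\bmu)c_{\text{obs}}/(\alpha\gamma_a(\bmu))$ --- is an inconsistency inherited from the paper itself and is harmless, since $\alpha \leq 1$ only makes the resulting lower bound more conservative and hence still valid.
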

\begin{proof}
The statement is a consequence of Theorem \ref{theorem_right} and the stability estimate \eqref{stability_solution}, applied to the problem \eqref{B_cal_e}.
\end{proof}
It is clear that $\beta^{LB}(\bmu)$ is very fast to be computed for a given $\bmu \in \Cal P$ due to the nature of all the constants involved, which are independent from the space-time formulation. Furthermore, it is well known that the computation of the dual norm of the residual can be efficiently evaluated thanks to the affine assumption {by means of suitable Riesz representers} \cite{negri2015reduced, negri2013reduced,RozzaHuynhPatera2008}. We recall that we recover all the cases since at least one between control and observation domain must be different from $\Omega$ to be $\Omega_u \neq \Omega_{\text{obs}}$.
\begin{remark}[The steady case]
All the arguments on this Section can be applied to steady linear \ocp s. In this case, the reduced problem reads: given $\bmu \in \Cal P$ find the optimal pair $(y_N(\bmu), p_N(\bmu)) \in Y_N \times Y_N$ such that
\begin{equation}
\label{RB_OCP_s}
\Cal B_s((y_N(\boldsymbol{\mu}), p_N(\boldsymbol{\mu})),(z, p), \bmu) =
\la \Cal F_s(\bmu), (z, q) \ra \spazio  \forall (z, q) \in Y_N \times  Y_N,
\end{equation}
with $Y_N \subset Y^{N_h}$. It is well known, that there exists a positive inf-sup steady stability constant $\beta_{\Cal B_s}\disc (\bmu)$ and  under the affine decomposition assumption, one can efficiently apply the greedy algorithm thanks to the following inequality \cite{negri2013reduced}:
\begin{equation*}
\norm{e}_{Y \times Y} \leq \frac{\norm{\Cal R}_{(Y \times Y)\dual}}{\beta_s^{LB}(\bmu)} \spazio \forall \bmu \in \Cal P.
\end{equation*}
{As a special case, one further} original contribution of this work is to {derive a novel} lower bound $\beta_s^{LB}(\bmu)$, {that allows to avoid having to rely on algorithms like the successive constraint methods} \cite{huynh2010natural}. Indeed, we obtain the a lower bound for $\beta_{\Cal B_s} \disc (\bmu)$ of the form:
\begin{equation}
\label{bounds}
\beta_s^{LB}(\bmu) =
\begin{cases}
\displaystyle \alpha \gamma_a(\bmu) & \quad \text{for } \quad \Omega_u = \Omega_{\text{obs}}, \\
\displaystyle \frac{\gamma_a(\bmu) }{\sqrt{2\max\left \{1,\left (\frac{c_c(\bmu)c_u}{\alpha \gamma_a(\bmu)}\right )^2\right \}}} & \quad \Omega_u \neq \Omega_{\text{obs}} \text{ assuming } \Omega_{\text{obs}} \neq \Omega.\\
\displaystyle \frac{\gamma_a(\bmu) }{\sqrt{2\max \left \{1,\left (\frac{c_c(\bmu)c_{\text{obs}}}{\alpha \gamma_a(\bmu)} \right )^2\right \}}} & \quad \Omega_u \neq \Omega_{\text{obs}} \text{ assuming } \Omega_{u} \neq \Omega,\\
\end{cases}
\end{equation}
as a consequence of Lemma \ref{lemma_surj_s} and Theorem \ref{s_FE}.
\end{remark}
In the next Section we will test the proposed error estimators both for time dependent and steady \ocp s governed by Graetz flows with distributed and boundary control.

\section{\ocp s governed by a Graetz flow}
\label{results}
In this Section we will validate our estimator for \ocp s in two different parametrized setting: one dealing only with physical parameters, the other one with both physical and  geometrical parametrization. The numerical tests, which are inspired by \cite{karcher2018certified, negri2013reduced, Strazzullo2}, are presented in their unsteady and steady version. For both the tests, we define $Y = H^1_0(\Omega_{\Gamma_D})$, as the functions which have $H^1$ regularity over $\Omega$ and are null on $\Gamma_D$, that will indicate the portion of the boundary where Dirichlet boundary conditions are applied. Finally, for the control space we consider $U = L^2(\Omega_u)$, while the observation space is $Y_{\text{obs}} = L^2(\Omega_{\text{obs}})$. Furthermore, in the numerical experiments, the spatial orizontal and vertical coordinates will be indicated with $x_1$ and $x_2$, respectively. \\Let us start with the physical parametrization experiments.
\subsection{Physical Parametrization}
This Section deals with an \ocp $\;$ governed by a Graetz flow with physical parametrization. We will solve the problem in $\Omega$, represented in Figure \eqref{dominio_1}, where the observation domain is $\Omega_{\text{obs}} = \Omega_1 \cup \Omega_2$, with $\Omega_1 = [0.2, 0.8]\times [0.3, 0.7]$ and $\Omega_2 = [1.2, 2.5]\times [0.3, 0.7]$ For this test case, the control is distributed, i.e. $\Omega_u = \Omega$.
\begin{figure}[H]

\begin{center}
\begin{tikzpicture}

\draw (0,0) -- (7.5,0) -- (7.5,3) -- (0,3) -- (0,0);
\filldraw[color=gray!60, fill=gray!10, thick](0.6,0.9) -- (2.4, 0.9) -- (2.4, 2.1) -- (0.6, 2.1) -- (0.6,0.9);
\filldraw[color=gray!60, fill=gray!10, thick](3.6,0.9) -- (7.49, 0.9) -- (7.49, 2.1) -- (3.6, 2.1) -- (3.6,0.9);
\filldraw[color=blue!60, fill=gray!10, very thick](0,0) -- (3.,0);
\filldraw[color=blue!60, fill=gray!10, very thick](0,0) -- (0.,3);
\filldraw[color=blue!60, fill=gray!10, very thick](0,3) -- (3.,3);
\filldraw[color=red!60, fill=gray!10, very thick, dashed](3,3) -- (7.5,3);
\filldraw[color=red!60, fill=gray!10, very thick, dashed](3,0) -- (7.5,0);

\node at (1.5,1.5){$\Omega_1$};
\node at (5.5,1.5){$\Omega_2$};
\node at (-.5,1.5){\color{blue}{$\Gamma_{D_1}$}};
\node at (5.5,3.3){\color{red}{$\Gamma_{D_2}$}};
\node at (7.8,1.5){\color{black}{$\Gamma_{N}$}};

\node at (0,-.3){\color{black}{$(0,0)$}};
\node at (3,-.3){\color{black}{$(1,0)$}};
\node at (7.5,-.3){\color{black}{$(2.5,0)$}};

\node at (0,3.3){\color{black}{$(0,1)$}};
\node at (3,3.3){\color{black}{$(1,1)$}};
\node at (7.5,3.3){\color{black}{$(2.5,1)$}};
%
\end{tikzpicture}
\end{center}
\caption{Domain $\Omega$. \textit{Observation domain:} $\Omega_{\text{obs}} = \Omega_1 \cup \Omega_2$, \textit{Control domain:} $\Omega$. \textit{Blue solid line:} first Dirichlet boundary conditions. \textit{Red dashed line:} second Dirichlet boundary conditions.}
\label{dominio_1}
\end{figure}
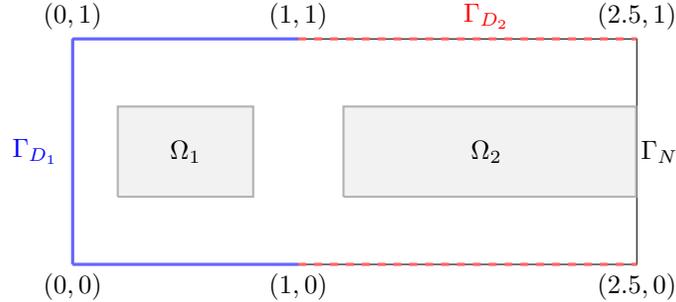
 The parameter is $\bmu \eqdot (\mu_1, \mu_2, \mu_3) \in \Cal P = [3,20]\times [0.5,1.5]\times[1.5,2.5]$, where $\mu_1$ will represent the P\'eclet number of the \ocp $\;$ governed by advection-diffusion equation, while $\mu_2$ and $\mu_3$ will be our constant desired state $y_d$ in the subdomains $\Omega_1$ and $\Omega_2$, respectively. Namely, we want to change a heat source in order to reach the parametrized observation. Thus, we solve the following problem: given $\bmu \in \Cal P$, find the pair $(y,p) \in \Cal Y_0 \times\Cal Y_T$ such that
\begin{equation}
\label{test_1_time}
\tag{$P$}
\begin{cases}
\displaystyle y(\chi_{\Omega_1} + \chi_{\Omega_2})  - \dt{p} - \frac{1}{\mu_1}\Delta p - x_2(1 -x_2)\frac{\partial p}{\partial x_1} =
\mu_2 \chi_{\Omega_1} + \mu_3 \chi_{\Omega_2} & \text{ in } \Omega \times [0,T], \\
\displaystyle  \dt{y} - \frac{1}{\mu_1}\Delta y - x_2(1 -x_2)\frac{\partial y}{\partial x_1} - \frac{1}{\alpha}p =  0& \text{ in } \Omega \times [0,T], \\
y(0) = y_0  & \text{ in } \Omega , \\
p(T) = 0 & \text{ in } \Omega, \\
\displaystyle \frac{1}{\mu_1}\frac{\partial y}{\partial n} = 0 & \text{ on $\Gamma_N \times [0,T],$} \\
y = 1 \text{ and } p = 0 & \text{ on $\Gamma_{D_1} \times [0,T],$} \\
y = 2 \text{ and } p = 0 & \text{ on $\Gamma_{D_2} \times [0,T],$}
\end{cases}
\end{equation}
where $y_0$ is a null function verifying the boundary conditions,
$\Gamma_{D_1} = \partial \Omega \cap \{(x_1, x_2) \; |\; x_1 \leq 1\}$ and $\Gamma_{D_2} = \partial \Omega \cap \{(x_1, x_2) \; |\; 1 < x_1  < 2.5 \}$. Calling $\Gamma_D \eqdot \Gamma_{D_1} \cup \Gamma_{D_2}$, then $\Gamma_N = \partial \Omega \setminus \Gamma_D$, where $\Gamma_N$ Neumann boundary conditions have been applied, with $n$ normal outer vector with respect to the boundary.
Problem \eqref{test_1_time}, can be considered also in its steady version: given $\bmu \in \Cal P$, find the pair $(y,p) \in Y \times Y$ such that
\begin{equation}
\label{test_1}
\tag{$P_s$}
\begin{cases}
\displaystyle y(\chi_{\Omega_1} + \chi_{\Omega_2})  - \frac{1}{\mu_1}\Delta p - x_2(1 -x_2)\frac{\partial p}{\partial x_1} =
\mu_2 \chi_{\Omega_1} + \mu_3 \chi_{\Omega_2} & \text{ in } \Omega, \\
\displaystyle  - \frac{1}{\mu_1}\Delta y - x_2(1 -x_2)\frac{\partial y}{\partial x_1} - \frac{1}{\alpha}p =  0& \text{ in } \Omega, \vspace{2mm}\\
\displaystyle \frac{1}{\mu_1}\frac{\partial y}{\partial n} = 0 & \text{ on $\Gamma_N,$} \\
y = 1 \text{ and } = 0 & \text{ on $\Gamma_{D_1},$} \\
y = 2 \text{ and } p = 0 & \text{ on $\Gamma_{D_2}.$}
\end{cases}
\end{equation}
It is straightforward to notice that the system is affine decomposed.
Since the problem we are dealing with is a distributed control problem, following the bounds in \eqref{bounds}, we must consider
\begin{equation}
\label{quatities}
\beta^{LB}(\bmu) = \beta_s^{LB}(\bmu) = \displaystyle \frac{\gamma_a (\bmu) }{\sqrt{2\max\left \{1,\left (\frac{c_c(\bmu)c_u}{\alpha \gamma_a(\bmu)}\right )^2\right \}}}.
\end{equation}
We now explicit all the constants involved in the bound, which is valid both for the unsteady and the steady case. First of all, we recall that, as reported in \cite{quarteroni2008numerical}, for the problem at hand
\begin{equation}
\gamma_a(\bmu) \eqdot \frac{1}{\mu_1(1 + C_{\Omega}^2)},
\end{equation}
where $C_{\Omega}$ is the Poincar\'e constant which verifies $\norm{v}_{L^2(\Omega)} \leq C_{\Omega}\norm{v}_{H^1(\Omega)}$, for all $v \in H^1(\Omega)$.  For this specific case, $c_u = C_{\Omega}$. We still need to specify $c_c(\bmu)$. It is easy to observe that the continuity constant does not depend on $\bmu$ and $c_c \eqdot c_c(\bmu) = C_{\Omega}$, indeed:
\begin{equation}
\label{c_c}
|c(p,y; \bmu)|= \left | \intSpace{py} \right | \leq \norm{p}_{L^2(\Omega)}\norm{y}_{L^2(\Omega)}
\leq C_{\Omega}\norm{p}_{U}\norm{y}_{Y}.
\end{equation}
The value of $C_{\Omega}$ has been computed by solving the associated eigenvalue problem, which is still computationally feasible since it is parameter independent and it has to be evaluated only once. \\
For both the test cases, $\Cal P_h$ is given by $N_{\text{max}} = 225$ parameters uniformly distributed in $\Cal P$ and we consider $\tau = 1\cdot 10^{-4}$ as a tolerance for the greedy algorithm. In order to evaluate the performance of the proposed strategy, we show an average error analysis and average effectivity analysis over $100$ parameters with a uniform distribution in $\Cal P$. Furthermore, we recall that in order to spatial represent the pair $(y,p)$, we used $\mathbb P^1-\mathbb P^1$ elements, while for time discretization we chose $T = 5$ and $\Delta t = 1/6$, which leads to $N_t = 30$, making the tests comparable with \cite{Strazzullo2}.\\ We now separately present the numerical results for unsteady and steady case, respectively.

\subsubsection{Unsteady Case}
\label{sec:time_no_geo}
This Section tests the performance of the employment of the lower bound $\beta^{LB}_{\Cal B}$ to problem \eqref{test_1_time}, with fixed $\alpha = 0.01$. The exploited greedy algorithm reached the chosen tolerance $\tau$ with $N = 13$ which, with aggragated spaces technique, leads to a reduced space of dimension $4N = 52$: much less compared to the high fidelity one, with $\Cal N = N_h \cdot N_t = 272160$. This high difference in the systems dimensionality, allow us to reach huge \emph{speed up}, i.e. the number of reduced simulations which can be performed in a single realization of the high fidelity problem. In this case, averaging over $100$ parameters, we reach values around $2\cdot 10^4$. In Figure \ref{no_geo} we show some representative solutions for $t = 1s, 3s, 4.5s$, with $\alpha = 0.01$ and $\bmu = (15.0, 0.6, 1.8)$. 

\begin{figure}[H]
\centering
\begin{subfigure}[b]{0.3\textwidth}
\centering
\includegraphics[width=\textwidth]{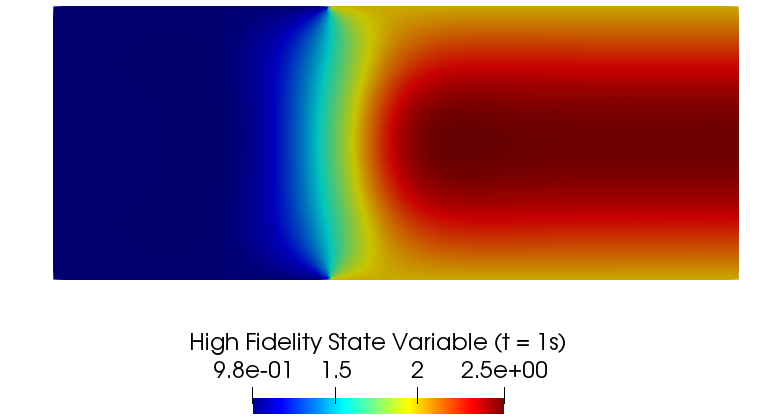}
\caption{}
\label{fig:off_y_1}
\end{subfigure}
\hfill
\begin{subfigure}[b]{0.3\textwidth}
\centering
\vspace{2mm}
\includegraphics[width=\textwidth]{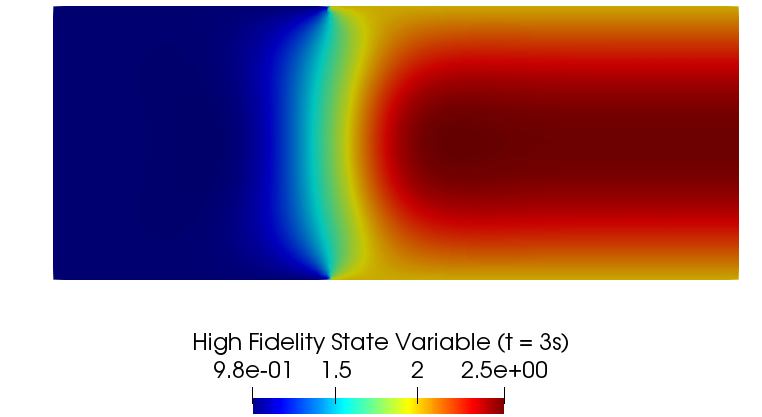}
\caption{}
\label{fig:off_y_3}
\end{subfigure}
\hfill
\begin{subfigure}[b]{0.3\textwidth}
\centering
\includegraphics[scale = 0.18]{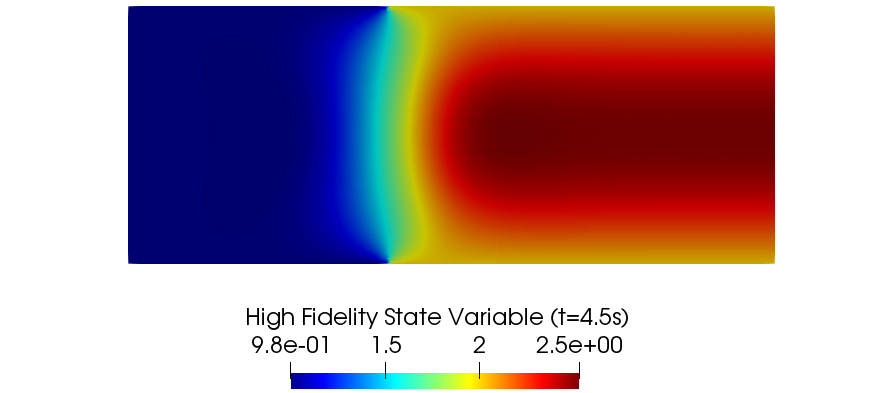}
\caption{}
\label{fig:off_y_5}
\end{subfigure}
\hfill
\begin{subfigure}[b]{0.3\textwidth}
\centering
\includegraphics[width=\textwidth]{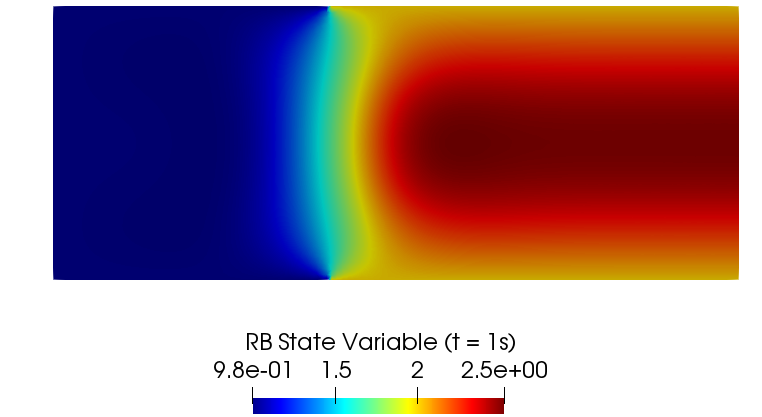}
\caption{}
\label{fig:on_y_1}
\end{subfigure}
\hfill
\begin{subfigure}[b]{0.3\textwidth}
\centering
\includegraphics[width=\textwidth]{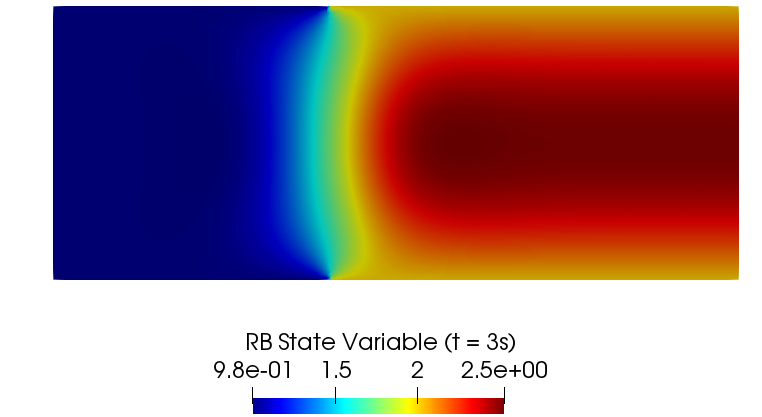}
\caption{}
\label{fig:on_y_3}
\end{subfigure}
\hfill
\begin{subfigure}[b]{0.3\textwidth}
\centering
\includegraphics[scale = 0.204]{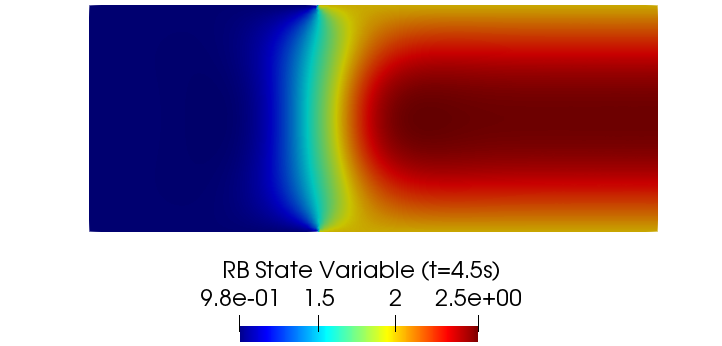}
\caption{}
\label{fig:on_y_5}
\end{subfigure}
\hfill
\begin{subfigure}[b]{0.3\textwidth}
\centering
\hspace{3mm}\includegraphics[width=\textwidth]{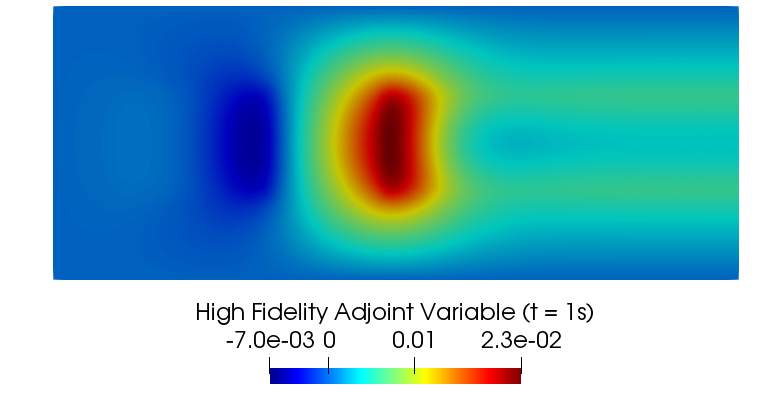}
\caption{}
\label{fig:off_p_1}
\end{subfigure}
\hfill
\begin{subfigure}[b]{0.3\textwidth}
\centering
\hspace{3mm}\includegraphics[width=\textwidth]{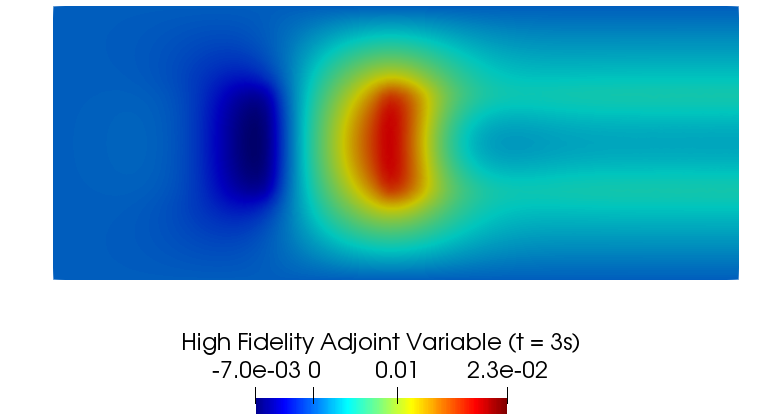}
\caption{}
\label{fig:off_p_3}
\end{subfigure}
\hfill
\begin{subfigure}[b]{0.3\textwidth}
\centering
\hspace{3mm}\includegraphics[scale = 0.18]{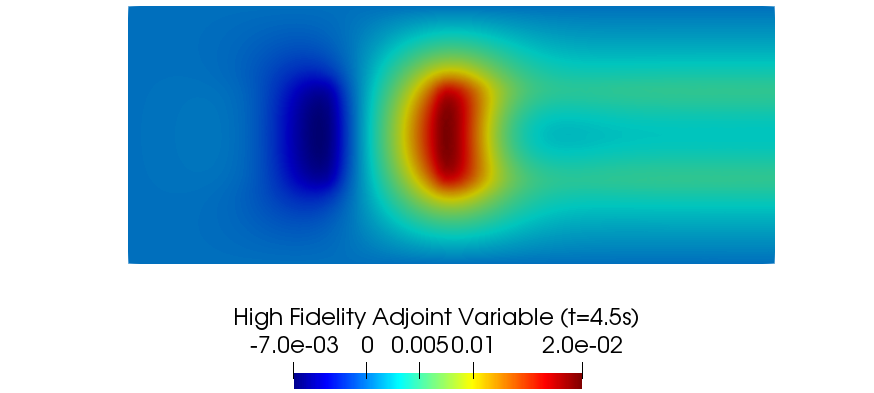}
\caption{}
\label{fig:off_p_3}
\end{subfigure}
\hfill
\begin{subfigure}[b]{0.3\textwidth}
\centering
\hspace{3mm}\includegraphics[width=\textwidth]{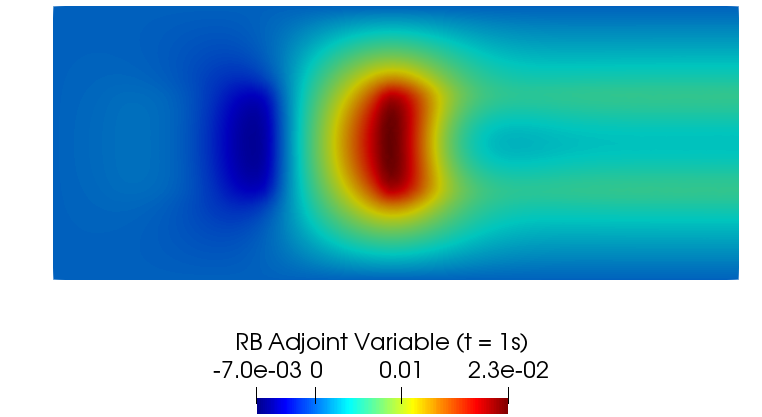}
\caption{}
\label{fig:on_p_1}
\end{subfigure}
\hfill
\begin{subfigure}[b]{0.3\textwidth}
\centering
\hspace{3mm}\includegraphics[width=\textwidth]{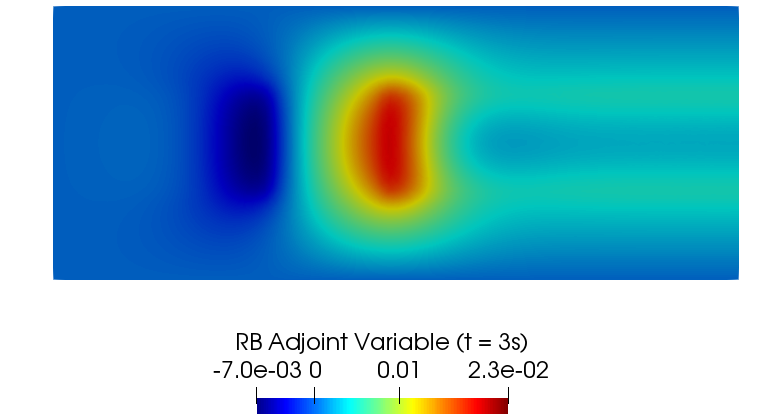}
\caption{}
\label{fig:on_p_3}
\end{subfigure}
\hfill
\begin{subfigure}[b]{0.3\textwidth}
\centering
\hspace{3mm}\includegraphics[scale = 0.18]{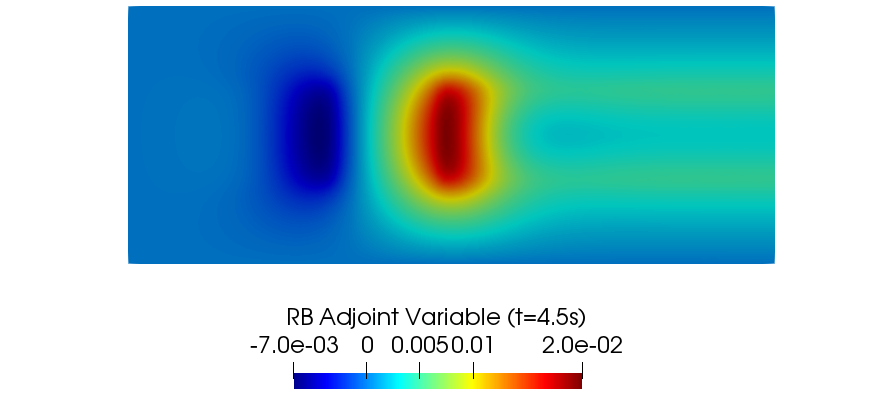}
\caption{}
\label{fig:on_p_5}
\end{subfigure}
\caption{Optimal high fidelity and reduced solutions with $\alpha = 0.01$ and $\bmu=(12.0, 1.0, 2.5)$. High fidelity state variable for $t = 1s, 3s, 4.5s$ in (a), (b), (c), respectively,  and reduced state variable for $t = 1s, 3s, 4.5s$ in (d), (e), (f). Analogously,  high fidelity adjoint variable in (f), (g), (h)  and reduced adjoint variable in (i), (j), (k), for $t = 1s, 3s, 4.5s.$}
\label{no_geo}
\end{figure}

We decided to focus only on state and adjoint variable, since the control can be recovered through relation \eqref{gradient_eq}. The reduced model is able to reproduce the high fidelity solution for all the time instances considered. Furthermore, we propose a performance comparison with respect to the use of the Babu\v ska inf-sup constant. Table \ref{tab:test_1_time} presents the average absolute and relative error
\begin{equation}
\label{eq:errors}
\norm{e}_{\text{abs}} \eqdot \sqrt{(
\norm{y^{\Cal N} - y_N}_{Y}^2 + \norm{p^{\Cal N} - p_N}_{Y}^2)} \quad \text{and}
\quad
\norm{e}_{\text{rel}} \eqdot \frac{\sqrt{(\norm{y^{\Cal N} - y_N}_{Y}^2 + \norm{p^{\Cal N} - p_N}_{Y}^2)}}{\sqrt{(\norm{y^{\Cal N}}_{Y}^2 + \norm{p^{\Cal N}}_{Y}^2)}},
\end{equation}
together with the effectivity $\eta \eqdot \Delta_N(\bmu)/\norm{e}_{\Cal Q\times \Cal Q}$ and the value of $\Delta_N(\bmu)$\footnote{\label{error_estimator}With abuse of notation we use $\Delta_N(\bmu)$ to describe both the \emph{exact error estimator}, given by the use of the Babu\v ska inf-sup constant $\beta_{\Cal B}^\Cal N(\bmu)$, and the \emph{surrogate error estimator}, derived by employing the lower bound $\beta^{LB}(\bmu).$}. It is clear that the lower bound $\beta^{LB}(\bmu)$ cannot give better results with respect to the exact computation of $\beta_{\Cal B}^{\disc}(\bmu)$. Still, exploiting the lower bound is of great convenience in the offline phase: indeed, the exact computation of the Babu\v ska inf-sup constant for a given $\bmu$ takes, in average, $9.7s$, while the computation of the value of the lower bound is performed in $0.09s$. Furthermore, the use of the lower bound very mildly affects the accuracy, since the errors are comparable between the two considered options.
An indicator of the effectivity of the new error bound is given also by Figure \ref{babu_vs_lb}, where the value of the two constants are compared with respect $\mu_1$, which is the only parameter affecting the left hand side of the system. We have to remark that the value of the penalization parameter $\alpha$ drastically changes the tightness of the lower bound: for higher $\alpha$, we have a bad approximation of the Babu\v ska inf-sup constant. This phenomenon is not new in literature, see, for example \cite{karcher2018certified}. We can observe a good approximation of $\beta_{\Cal B_s}^{N_h}$ in Figure \ref{fig:babu_lb_1_time}, while we lose precision for smaller values of the penalization parameter as depicted in Figure \ref{fig:babu_lb_2_time}.

\begin{table}[H]
\caption{Unsteady case: performance analysis for the problem \eqref{test_1_time} for $\alpha = 0.01$. Average error, estimators and effectivities exploiting the lower bound $\beta^{LB}(\bmu)$ and the Babu\v ska inf-sup constant $\beta_{\Cal B}^{\Cal N}(\bmu)$, with respect to $N$.}
\label{tab:test_1_time}
\footnotesize{
\begin{tabular}{|c|c|c|c|c|c|c|c|c|}
\hline
\multirow{2}{*}{$N$} & \multicolumn{4}{c|}{$\beta^{LB}$}      & \multicolumn{4}{c|}{$\beta_{\Cal B}^{\Cal N}$}                         \\ \cline{2-9}
                     & $\norm{e}_{\text{rel}}$ &$\norm{e}_{\text{abs}}$ & $\Delta_N(\bmu)$    & $\eta$            & $\norm{e}_{\text{rel}}$ &$\norm{e}_{\text{abs}}$ & $\Delta_N(\bmu)$    & $\eta$   \\ \hline
1                    & 5.61e--1&4.37e+0     & 1.29e+2   & 2.96e+1 & 5.25e--1 & 4.46e+1 & 1.30e+1 & 2.91e+0 \\ \hline
3                    & 1.81e--1&5.84e--1    & 3.42e+1    & 5.86e+1 & 1.16e--1& 5.58e--1& 3.16e+0&5.67e+0\\ \hline
5                    & 3.13e--2&1.58e--1   & 7.25e+0   & 4.58e+1 & 3.84e--2& 1.99e--1& 9.03e--1&4.53e+0\\ \hline
7                    & 1.12e--3&4.98e--2  & 3.07e--1   & 6.17e+1 & 7.70e--3& 3.76e--2& 1.98e--1&5.26e+0\\ \hline
9                    & 4.36e--2&1.33e--2  & 6.38e--1  & 4.78e+1 & 3.42e--3 & 1.24e--2& 5.41e--2&4.36e+0\\ \hline
11                   & 1.46e--2&4.68e--3  & 2.23e--1  & 4.76e+1 & 1.19e--3& 4.06e--3& 2.11e--2&  5.21e+0\\ \hline
13                 & 3.90e--4&1.35e--3  & 7.32e--2  & 5.38e+1 & 3.53e--4& 1.26e--3& 6.46e--3&  5.10e+0\\ \hline
\end{tabular}
}
\end{table}

\begin{figure}[H]
\centering
\begin{subfigure}[b]{0.45\textwidth}
\centering
\includegraphics[width=\textwidth]{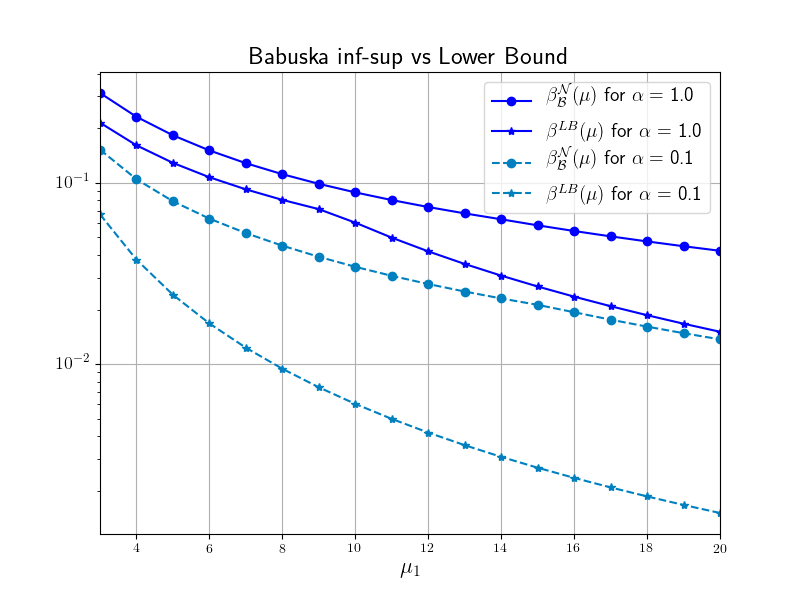}
\caption{}
\label{fig:babu_lb_1_time}
\end{subfigure}
\hfill
\begin{subfigure}[b]{0.45\textwidth}
\centering
\vspace{2mm}
\includegraphics[width=\textwidth]{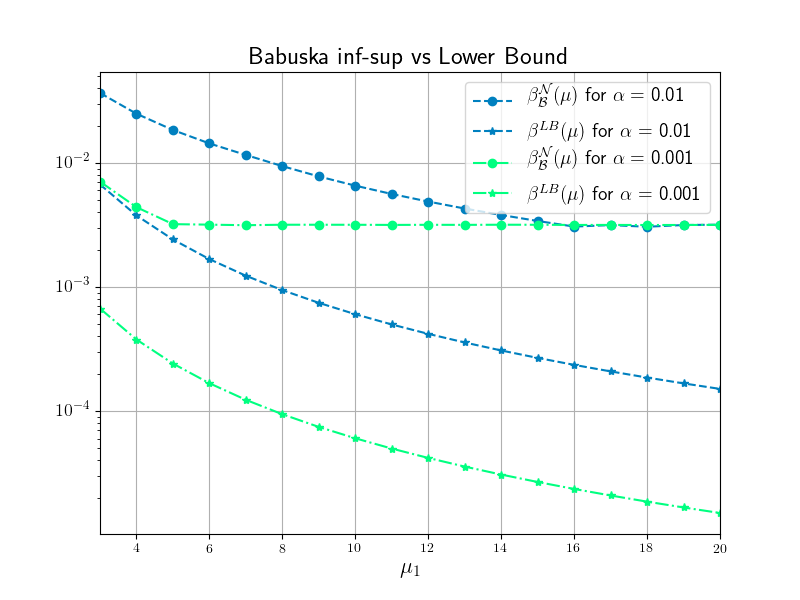}
\caption{}
\label{fig:babu_lb_2_time}
\end{subfigure}
\hfill
\caption{Comparison of the value of the lower bound $\beta^{LB}$ with respect to the exact Babu\v ska inf-sup constant $\beta_{\Cal B}^{\Cal N}$ for $\alpha = 1, 0.1$ and $\alpha = 0.01, 0.001$ in (a) and (b), respectively. The analysis has been performed varying the value of $\mu_1$.}
\label{babu_vs_lb}
\end{figure}

\subsubsection{Steady Case}
We report the results for the simulation of problem \eqref{test_1}. The high fidelity problem has dimension $\Cal N = N_h = 9072$. We performed a greedy algorithm which reached the tolerance $\tau$ after picking $N = 11$. In other words, the reduced system is much smaller with respect to the FE one, with a dimension of $4N = 44$, after the aggregated space procedure. The number $N$ is lower compared to the time dependent case: this is not surprising considering the simpler context of a steady problem. \\ We fixed $\alpha = 0.01$ and we propose a representative solution, obtained exploiting $\beta_s^{LB}$, in Figure \ref{s_no_geo} and an averaged performance analysis in Table \ref{tab:test_1}. The latter shows the performance of the greedy considering approach with respect an average absolute and relative error, given by
\begin{equation}
\label{eq:errors_s}
\norm{e}_{\text{abs}} \eqdot \sqrt{(\norm{y^{N_h} - y_N}_{Y}^2 + \norm{p^{N_h} - p_N}_{Y}^2)} \quad \text{and}
\quad
\norm{e}_{\text{rel}} \eqdot \frac{\sqrt{(\norm{y^{N_h} - y_N}_{Y}^2 + \norm{p^{N_h} - p_N}_{Y}^2)}}{\sqrt{(\norm{y^{N_h}}_{Y}^2 + \norm{p^{N_h}}_{Y}^2)}},
\end{equation}
respectively. The analysis has been carried out with the use of the lower bound and the Babu\v ska inf-sup constant and presents also a comparison of the effectivity value $\eta \eqdot \Delta_N(\bmu)/\norm{e}_{Y\times Y}$ and the error estimator itself\footnote{See footnote \ref{error_estimator}.}. As already specified for the time dependent test case, the lower bound cannot perform better with respect to the exact Babu\v ska inf-sup constant. Indeed, the latter is preferable in terms of estimator and effectivity. Nonetheless, the lower bound gives comparable results in the average error analysis and we underline that exploiting $\beta^{LB}_s$ is computationally convenient also in the steady case. Indeed, the computation of $\beta_s^{LB}$ approximately takes 0.09s, while the eigenvalue problem associated to $\beta_{\Cal B_s}^{N_h}$ is solved in $0.38s$ and we recall that the process must be repeated for the $N_{\text{max}}$ parameters of $\Cal P_h$. In other words, employing the surrogate error estimator guarantees a faster offline phase, also for less complicated problems.
\begin{figure}[H]
\centering
\begin{subfigure}[b]{0.45\textwidth}
\centering
\includegraphics[width=\textwidth]{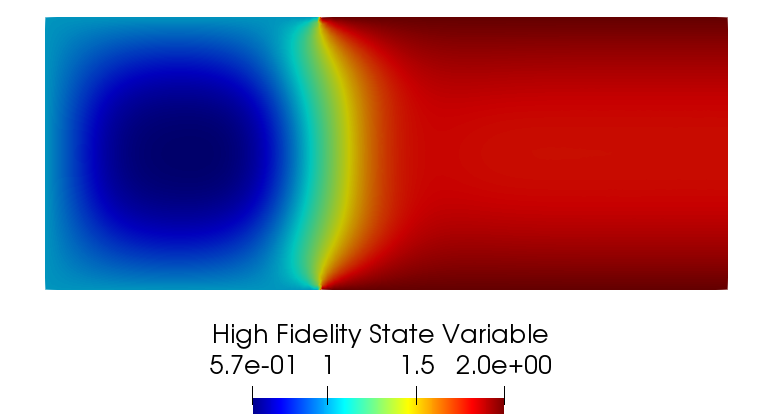}
\caption{}
\label{fig:off_y}
\end{subfigure}
\hfill
\begin{subfigure}[b]{0.45\textwidth}
\centering
\vspace{2mm}
\includegraphics[width=\textwidth]{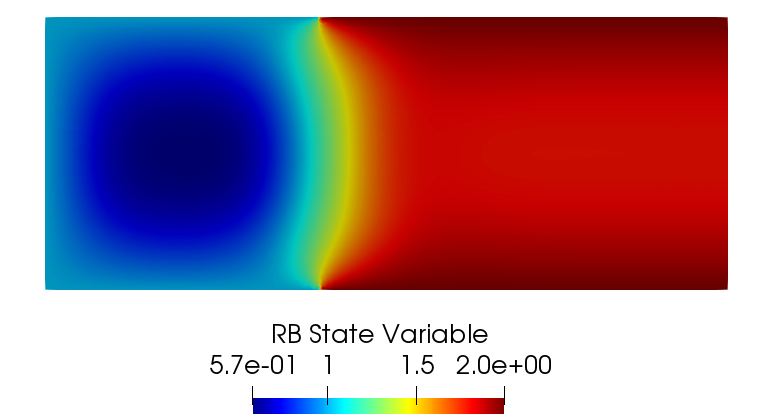}
\caption{}
\label{fig:on_y}
\end{subfigure}
\begin{subfigure}[b]{0.45\textwidth}
\centering
\includegraphics[width=\textwidth]{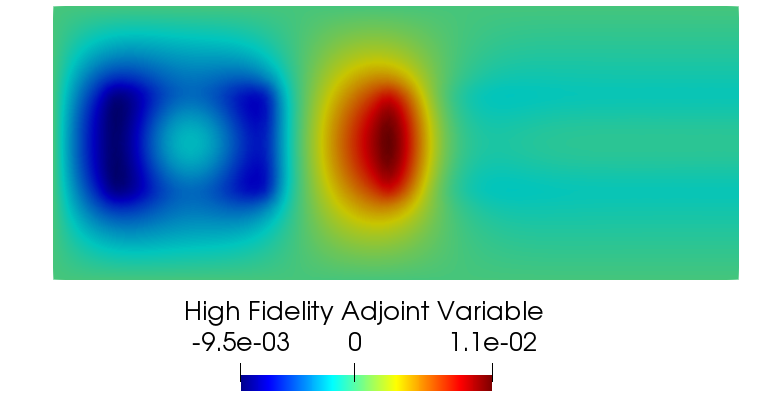}
\caption{}
\label{fig:off_p}
\end{subfigure}
\hfill
\begin{subfigure}[b]{0.45\textwidth}
\centering
\hspace{3mm}\includegraphics[width=\textwidth]{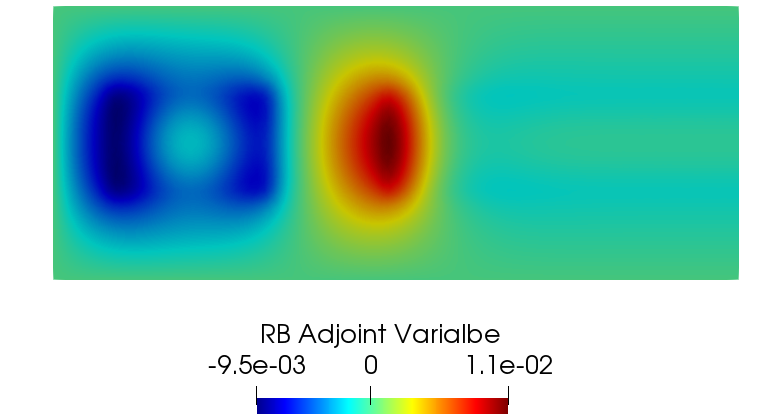}
\caption{}
\label{fig:on_p}
\end{subfigure}
\hfill
\caption{Optimal high fidelity and reduced solutions with $\alpha = 0.01$ and $\bmu=(15.0, 0.6, 1.8)$. High fidelity state variable in (a)  and reduced state variable in (b). Analogously,  high fidelity adjoint variable in (c)  and reduced adjoint variable in (d).}
\label{s_no_geo}
\end{figure}

 Furthermore, also in this case, the value of $\alpha$ highly influences the effectivity of the greedy procedure, as plots in Figure \ref{s_babu_vs_lb} show. If the value of $\alpha$ decreases, the effectivity increases, since the lower bound becomes a worse approximation of the Babu\v ska inf-sup constant, as already observed in Section \ref{sec:time_no_geo}. A good approximation of $\beta_{\Cal B_s}^{N_h}$ is shown in Figure \ref{fig:babu_lb_1_steady}, while we loose precision for smaller values of the penalization parameter as depicted in Figure \ref{fig:babu_lb_2_steady}. As in the time dependent case, we showed the comparison varying the value if $\mu_1$: indeed, the observation parameters $\mu_2$ and $\mu_3$, does not change the behaviour of the Babu\v ska inf-sup since they only affect the right hand side of the optimality system. The behaviour of the constants is very similar to the unsteady ones. The main reason is in the space-time formulation itself, which presents almost the same structure of the simpler steady problem. \\
Finally, speaking about the computational time gain of the reduced framework, also in the steady case we can observe a quite important speedup, which is around $80, 85$, independently from the value of $N$.

\begin{figure}[H]
\centering
\begin{subfigure}[b]{0.45\textwidth}
\centering
\includegraphics[width=\textwidth]{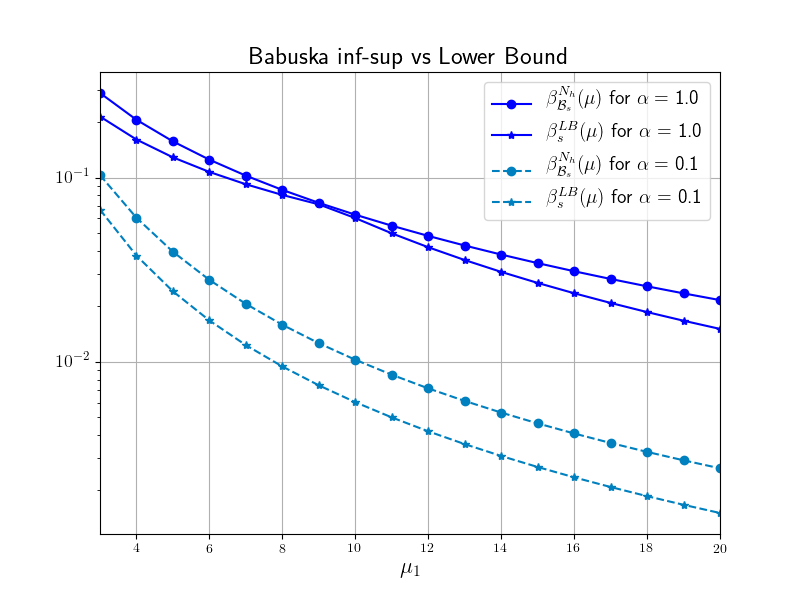}
\caption{}
\label{fig:babu_lb_1_steady}
\end{subfigure}
\hfill
\begin{subfigure}[b]{0.45\textwidth}
\centering
\vspace{2mm}
\includegraphics[width=\textwidth]{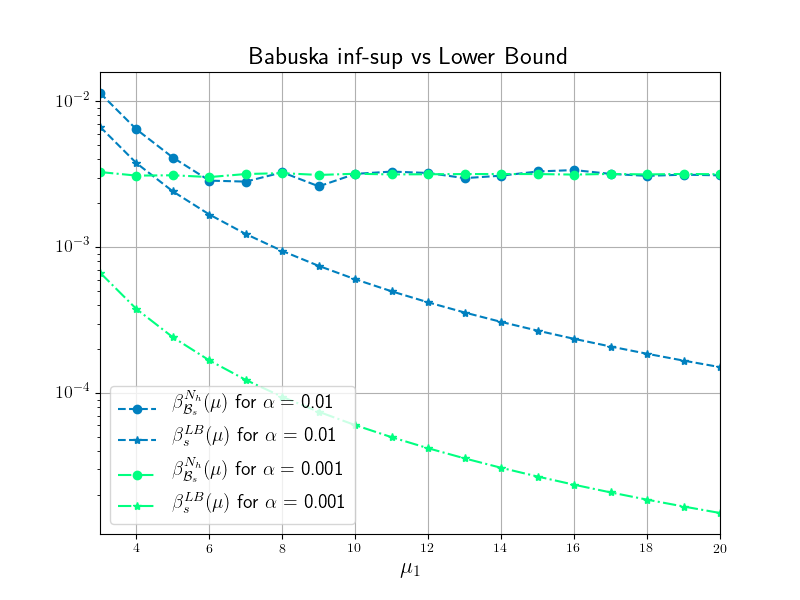}
\caption{}
\label{fig:babu_lb_2_steady}
\end{subfigure}
\hfill
\caption{Comparison of the value of the lower bound $\beta^{LB}_s(\bmu)$ with respect to the exact Babu\v ska inf-sup constant $\beta_{\Cal B_s}^{N_h}(\bmu)$ for $\alpha = 1, 0.1$ and $\alpha = 0.01, 0.001$ in (a) and (b), respectively. The analysis has been performed varying the value of $\mu_1$.}
\label{s_babu_vs_lb}
\end{figure}

\begin{table}[H]
\caption{Steady case: performance analysis for the problem \eqref{test_1}. Avarage error, estimators and effectivities exploiting the lower bound $\beta_s^{LB}$ and the Babu\v ska inf-sup constant $\beta_s^{N_h}$, with respect to $N$.}
\label{tab:test_1}
\footnotesize{
\begin{tabular}{|c|c|c|c|c|c|c|c|c|}
\hline
\multirow{2}{*}{$N$} & \multicolumn{4}{c|}{$\beta^{LB}_s(\bmu)$}      & \multicolumn{4}{c|}{$\beta_{\Cal B_s}^{N_h}(\bmu)$}                         \\ \cline{2-9}
                     & $\norm{e}_{\text{rel}}$ &$\norm{e}_{\text{abs}}$ & $\Delta_N(\bmu)$    & $\eta$            & $\norm{e}_{\text{rel}}$ &$\norm{e}_{\text{abs}}$ & $\Delta_N(\bmu)$    & $\eta$   \\ \hline
1                    & 7.76e--1&6.08e--1     & 7.35e+1   & 1.20e+2 & 7.72e--1 & 5.57e--1 & 1.52e+1 & 2.73e+1 \\ \hline
3                    & 1.23e--1&4.85e--2    & 4.36e+1    & 1.75e+2 & 1.71e--1& 5.56e--2& 1.40e+0&2.52e+1\\ \hline
5                    & 3.83e--2&1.77e--2   & 5.61e+0   & 1.86e+1 & 4.76e--2& 1.84e--2& 3.91e--1&2.11e+1\\ \hline
7                    & 6.28e--3&1.92e--3  & 9.66e--1   & 2.11e+2 & 1.07e--2& 3.52e--3& 1.05e--1&2.98e+1\\ \hline
9                    & 1.59e--3&6.64e--4  & 1.44e--1  & 1.56e+2  & 5.07e--3 & 1.36e--3& 3.01e--2&2.21e+1\\ \hline
11                   & 9.30e--4&2.18e--4  & 7.27e--2  & 1.51e+2 & 1.45e--3& 2.19e--4& 6.88e--3&  3.14e+1\\ \hline
\end{tabular}
}
\end{table}
In the next Section we will show how the proposed lower bound performs in a more complex test case, where also geometrical parametrization is considered.

\subsection{Physical and Geometrical Parametrization} In this Section we propose a boundary \ocp $\;$ governed by a Graetz flow with physical and geometrical parametrization. As paramter dependent domain, we consider $\Omega(\bmu)$ depicted in Figure \ref{dominio_2}. In this case the observation domain is $\Omega_{\text{obs}} (\bmu)= \Omega_3(\bmu) \cup \Omega_4(\bmu)$, where $\Omega_1$ is the unit square, $\Omega_3(\bmu) = [1, 1 + \mu_2]\times [0.8, 1]$,  $\Omega_4(\bmu) = [1, 1 + \mu_2]\times [0., 0.2]$, while $\Omega_2(\bmu) = [1, 1 + \mu_2]\times [0.2, 0.8]$. The control domian is given by $\Gamma_C(\bmu) =[1, 1 + \mu_2] \times \{0\} \cup  [1, 1 + \mu_2] \times \{1\}.$

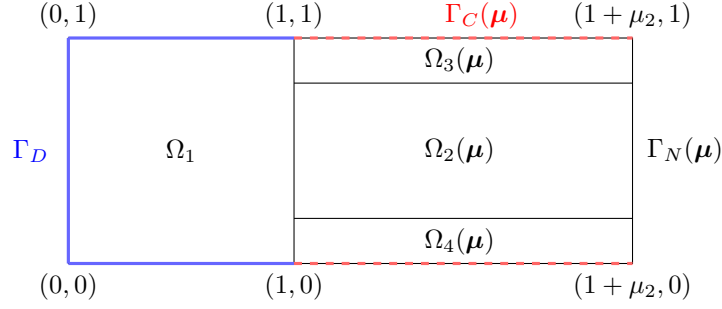
\begin{figure}[H]
\begin{center}
\begin{tikzpicture}

\draw (0,0) -- (7.5,0) -- (7.5,3) -- (0,3) -- (0,0);
\draw (3,0) -- (3, 3);
\draw (3,0.6) -- (7.5, 0.6);
\draw (3,2.4) -- (7.5, 2.4);
\filldraw[color=blue!60, fill=gray!10, very thick](0,0) -- (3.,0);
\filldraw[color=blue!60, fill=gray!10, very thick](0,0) -- (0.,3);
\filldraw[color=blue!60, fill=gray!10, very thick](0,3) -- (3.,3);
\filldraw[color=red!60, fill=gray!10, very thick, dashed](3,3) -- (7.5,3);
\filldraw[color=red!60, fill=gray!10, very thick, dashed](3,0) -- (7.5,0);

\node at (1.5,1.5){$\Omega_1$};
\node at (5.2,1.5){$\Omega_2(\bmu)$};
\node at (5.2,2.7){$\Omega_3(\bmu)$};
\node at (5.2,0.3){$\Omega_4(\bmu)$};

\node at (-.5,1.5){\color{blue}{$\Gamma_{D}$}};
\node at (5.5,3.3){\color{red}{$\Gamma_{C}(\bmu)$}};
\node at (8.2,1.5){\color{black}{$\Gamma_{N}(\bmu)$}};

\node at (0,-.3){\color{black}{$(0,0)$}};
\node at (3,-.3){\color{black}{$(1,0)$}};
\node at (7.5,-.3){\color{black}{$(1 + \mu_2,0)$}};

\node at (0,3.3){\color{black}{$(0,1)$}};
\node at (3,3.3){\color{black}{$(1,1)$}};
\node at (7.5,3.3){\color{black}{$(1 + \mu_2,1)$}};
%

\end{tikzpicture}
\end{center}
\caption{Domain $\Omega$. \textit{Observation domain:} $\Omega_{\text{obs}}(\bmu) = \Omega_3(\bmu)\cup \Omega_4(\bmu)$, \textit{Control domain:} $\Gamma_C(\bmu)$ (red dashed line). \textit{Blue solid line:} Dirichlet boundary conditions. The reference domain $\Omega$ is given by $\mu_2 = 1$.}
\label{dominio_2}
\end{figure}

 The parameter is $\bmu \eqdot (\mu_1, \mu_2, \mu_3) \in
\Cal P = [6.0, 20.0] \times [1.0, 3.0] \times [0.5, 3.0]$, where $\mu_1$, as the previous test case, represents the P\'eclet number, $\mu_2$ is a geometrical parameter which stretches the length of the right part of the domain, $\mu_3$ is a constant desired state $y_d$ observed in $\Omega_{\text{obs}}(\bmu)$. Here, we control the Neumann conditions in order to achieved the desired observation. The problem reads as follows: given $\bmu \in \Cal P$, find the pair $(y,p) \in \Cal Y_0 \times\Cal Y_T$ such that
\begin{equation}
\label{test_2_time}
\tag{$P^g$}
\begin{cases}
\displaystyle y\chi_{\Omega_{\text{obs}}(\bmu)}  - \dt{p} - \frac{1}{\mu_1}\Delta p - x_2(1 -x_2)\frac{\partial p}{\partial x_1} = 
\mu_3 \chi_{\Omega_{\text{obs}}(\bmu)}  & \text{ in } \Omega(\bmu) \times [0,T], \\
\displaystyle  \dt{y} - \frac{1}{\mu_1}\Delta y - x_2(1 -x_2)\frac{\partial y}{\partial x_1} - \frac{1}{\alpha}p\chi_{\Omega_u} =  0& \text{ in } \Omega(\bmu) \times [0,T], \\
y(0) = y_0  & \text{ in } \Omega(\bmu) , \\
p(T) = 0 & \text{ in } \Omega(\bmu), \\
\displaystyle \frac{1}{\mu_1}\frac{\partial y}{\partial n} = 0 & \text{ on $\Gamma_N(\bmu) \times [0,T],$} \\
\displaystyle \frac{1}{\mu_1}\frac{\partial y}{\partial n} = u & \text{ on $\Gamma_C(\bmu) \times [0,T],$}\\
y = 1 \text{ and } p = 0 & \text{ on $\Gamma_{D} \times [0,T],$} \\
\end{cases}
\end{equation}
where $y_0 = 0$ in $\Omega(\bmu)$ and satisfies the boundary conditions of the state variable,
$\Gamma_{D} = \partial \Omega(\bmu) \cap \{(x_1, x_2) \; |\; x_1 \leq 1\}$ is the Dirichlet boundary domain, while $\Gamma_N(\bmu) = \partial \Omega(\bmu) \setminus \Gamma_D \cup \Gamma_C(\bmu)$, is the Neumann boundary.
Also for this case, we can focus our attention on the steady version, i.e.: given $\bmu \in \Cal P$, find the pair $(y,p) \in Y \times Y$ such that
\begin{equation}
\label{test_2}
\tag{$P_s^g$}
\begin{cases}
\displaystyle y  - \frac{1}{\mu_1}\Delta p - x_2(1 -x_2)\frac{\partial p}{\partial x_1} = 
\mu_3 \chi_{\Omega_{\text{obs}}(\bmu)} & \text{ in } \Omega(\bmu), \\
\displaystyle  - \frac{1}{\mu_1}\Delta y - x_2(1 -x_2)\frac{\partial y}{\partial x_1} - \frac{1}{\alpha}p =  0& \text{ in } \Omega(\bmu), \vspace{2mm}\\
\displaystyle \frac{1}{\mu_1}\frac{\partial y}{\partial n} = 0 & \text{ on $\Gamma_N(\bmu),$} \\
\displaystyle \frac{1}{\mu_1}\frac{\partial y}{\partial n} = u & \text{ on $\Gamma_C(\bmu),$} \\
y = 1 \text{ and } = 0 & \text{ on $\Gamma_{D},$} \\
\end{cases}
\end{equation}
For this test case, since $\Omega_{\text{obs}}(\bmu)\neq \Omega$, we decided to exploit the lower bound \eqref{quatities} for $\alpha = 0.07$, both for the unsteady and the steady case, for consistency with respect to the previous test case. We analyzed also other values of the penalization parameters and the last bound of \eqref{bounds}: we postpone the analysis in Remark \ref{remark_bounds}.
We will give the exact form of the involved quantities after the tracing back of the domain, since every step of the greedy algorithm is performed in the reference domain $\Omega$ given by $\mu_2 = 1$, the interested reader can find the details in \cite{tesi}. We underline that the choice of the reference parameter affect the value of the constants $c_u$ and $c_{\text{obs}}$ but, for the sake of notation, we will omit the parameter dependency.
In this case,
\begin{equation}
\gamma_a(\bmu) \eqdot \min \left \{ \frac{1}{\mu_1}, \frac{1}{\mu_1\mu_2}, \frac{\mu_2}{\mu_1}, 1\right \}\frac{1}{(1 + C_{\Omega}^2)},
\end{equation}
where $C_{\Omega}$ is the Poincar\'e associated to the reference domain.  Furthermore, we can give an explicit definition of $c_u = C_{\Gamma_C}$, i.e. the \emph{trace constant} satisfying $\norm{p}_{\Gamma_C} \leq C_{\Gamma_C}\norm{p}_{H_1(\Omega)}$, once again we refer to \cite{quarteroni2008numerical}. We remark that the Poincar\'e and the trace constants can be evaluated directly through the resolution of an eigenvalue problem which has to be performed only once since defined in the reference domain $\Omega$. In other words, their exact computation does not impact the offline performance of the reduced space building procedure. Furthermore, following the same strategy of \eqref{c_c}, we obtain that $c_c(\bmu) = C_{\Gamma_C}$. For the space-time discretization we exploited $\mathbb P^1-\mathbb P^1$ elements and $N_t = 30$ in the time interval $[0, T] = [0.,5.]$, i.e. $\Delta t = 1./6.$, in order to compare our results with \cite{Strazzullo2}.
Also for this test case, we perform a greedy algorithm on $\Cal P_h$ of cardinality $N_{\text{max}} = 225$ picked through an uniform distribution on $\Cal P$. The tolerance has been chosen as $\tau = 1\cdot 10^{-4}$. For the steady and the unsteady case, we propose a performance analysis over $100$ parameters with a uniform distribution in $\Cal P$. The results for the unsteady and the steady case follow.

\subsubsection{Unsteady Case}
\label{sec:time_geo}
This section deals with problem \eqref{test_2_time}, with fixed $\alpha = 0.07$, to which we applied the greedy algorithm using the lower bound \eqref{quatities}. To achieve the tolerance $\tau$, the offline phase took $N = 19$. Once again, exploiting the aggragated spaces strategy, we ended up with a reduced space of global dimension $4N = 76$. The problem at hand, due to the boundary control and the geometrical parametrization, needs a greater number of basis with respect to the previous example. Still, the global reduced dimension is convenient compared to the high fidelity dimension $\Cal N = N_h \times N_t = 310980$. This is the reason we reach, also in this case, a good speed up around the value of $3 \cdot 10^4$ for all $N$, obtained averaging over $100$ parameters uniformly distributed in $\Cal P$. Figure \ref{geo} shows representative high fidelity and reduced state and adjoint solutions for $t = 1s, 3s, 4.5s$, with $\alpha = 0.07$ and $\bmu = (15.0, 1.5, 2.5)$. The reduced solutions recover the high-fidelity behaviour. The same is for the control variable, represented in Figure \ref{fig:u_1} for $x_2 = 1$ (we omitted $x_2 = 0$, due to the symmetry of the problem): the two solutions visually coincide for all the time instances.

\begin{table}[H]
\caption{Unsteady case: performance analysis for the problem \eqref{test_2_time} for $\alpha = 0.07$. Average error, estimators and effectivities exploiting the lower bound $\beta^{LB}(\bmu)$ and the Babu\v ska inf-sup constant $\beta_{\Cal B}^{\Cal N}(\bmu)$, with respect to $N$.}
\label{tab:test_2_time}
\footnotesize{
\begin{tabular}{|c|c|c|c|c|c|c|c|c|}
\hline
\multirow{2}{*}{$N$} & \multicolumn{4}{c|}{$\beta^{LB}(\bmu)$}      & \multicolumn{4}{c|}{$\beta_{\Cal B}^{\Cal N}(\bmu)$}                         \\ \cline{2-9}
                     & $\norm{e}_{\text{rel}}$ &$\norm{e}_{\text{abs}}$ & $\Delta_N(\bmu)$    & $\eta$            & $\norm{e}_{\text{rel}}$ &$\norm{e}_{\text{abs}}$ & $\Delta_N(\bmu)$    & $\eta$   \\ \hline
1                    & 5.61e--1&7.16e+0     & 3.05e+4   & 4.27e+3 & 6.62e--1 & 1.51e+1 & 7.68e+1 & 5.08e+1 \\ \hline
3                    & 2.10e--1&2.26e+0    & 5.44e+3    & 2.40e+3 & 2.75e--1& 3.64e--1& 1.96e+1&5.41e+1\\ \hline
5                    & 8.66e--2&8.92e--1   & 1.99e+3   &  2.23e+3 & 8.01e--2& 1.33e--1& 3.89e+0&2.91e+1\\ \hline
7                    & 3.88e--2&4.11e--1  & 6.79e+2   & 1.65e+3 & 4.57e--2& 7.03e--2& 3.89e+0&2.76e+1\\ \hline
9                    & 2.46e--2&2.68e--1  &  4.99e+2  & 1.91e+3 & 2.17e--2 & 3.71e--2& 1.94e+0&2.84e+1\\ \hline
11                   & 1.09e--2&1.11e--1  & 1.95e+2  & 1.76e+3 & 9.06e--3& 1.65e--2& 1.05e+0&  2.76e+1\\ \hline
13                 & 7.16e--3&7.83e--2  & 1.40e+2  & 1.79e+3 & 5.98e--3& 9.39e--3& 4.59e--1&  2.82e+1\\ \hline
15                 & 4.60e--3&4.93e--2  & 1.08e+2  & 2.20e+3 & 4.26e--3& 6.61e--3& 2.65e--1&  3.34e+1\\ \hline
17                 & 2.48e--3&2.46e--2  & 4.65e+1  & 1.88e+3 & 2.36e--3& 4.01e--3& 1.11e--1&  2.76e+1\\ \hline
19                 & 1.72e--3&1.16e--2  & 3.14e+1  & 1.93e+3 & 1.73e--3& 4.81e--3& 6.97e--2&  2.55e+1\\ \hline

\end{tabular}
}
\end{table}
The performance of the greedy algorithm is also evaluated in Table \ref{tab:test_2_time}. It represents the average errors defined in \eqref{eq:errors}
together with the effectivity and the error estimator\footnote{See footnote \ref{error_estimator}} with respect to the Babu\v ska inf-sup constant $\beta_{\Cal B}^\Cal N(\bmu)$ and the lower bound. In this case, we lose a lot in effectivity, but we recall that computing $225$ Babu\v ska inf-sup constants is quite expensive compared to the evaluation of the value of $\beta^{LB}(\bmu)$. Indeed, the first constant, in this case, takes around $8.6s$ to be computed, in average, while, as already said, $\beta^{LB}(\bmu)$ is computed in $0.09s$. 
\begin{figure}[H]
\centering
\begin{subfigure}[b]{0.3\textwidth}
\centering
\includegraphics[width=0.93\textwidth]{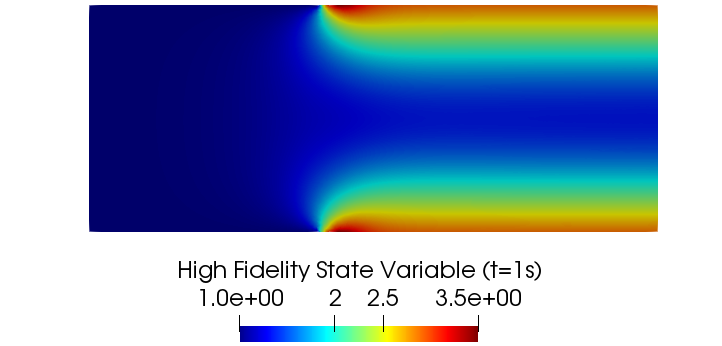}
\caption{}
\label{fig:off_y_1}
\end{subfigure}
\hfill
\begin{subfigure}[b]{0.3\textwidth}
\centering
\vspace{2mm}
\includegraphics[width=\textwidth]{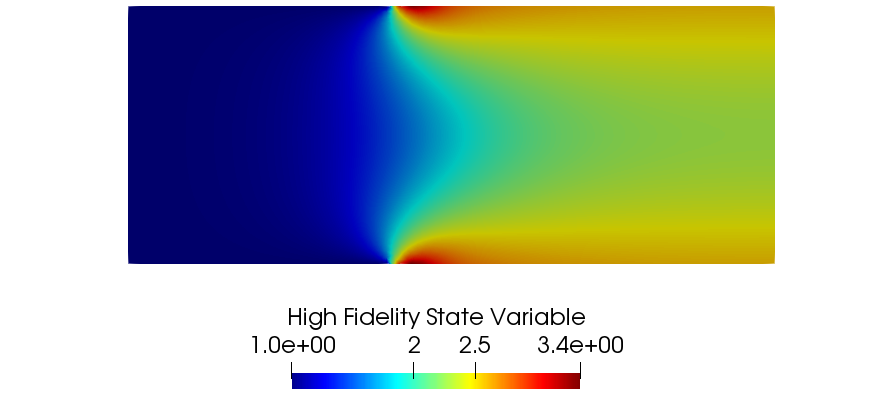}
\caption{}
\label{fig:off_y_3}
\end{subfigure}
\hfill
\begin{subfigure}[b]{0.3\textwidth}
\centering
\includegraphics[width=\textwidth]{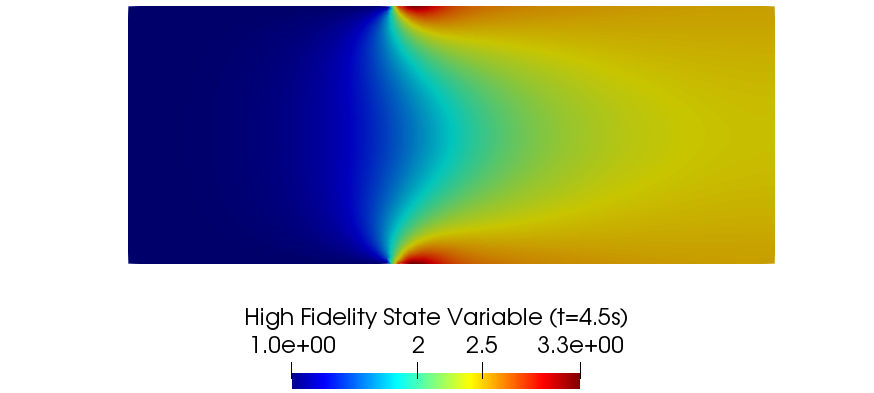}
\caption{}
\label{fig:off_y_5}
\end{subfigure}
\hfill
\begin{subfigure}[b]{0.3\textwidth}
\centering
\includegraphics[width=\textwidth]{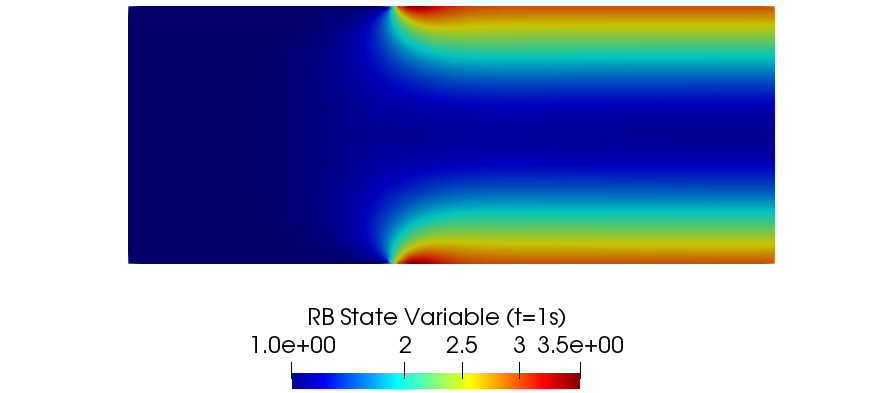}
\caption{}
\label{fig:on_y_1}
\end{subfigure}
\hfill
\begin{subfigure}[b]{0.3\textwidth}
\centering
\vspace{2mm}
\includegraphics[width=\textwidth]{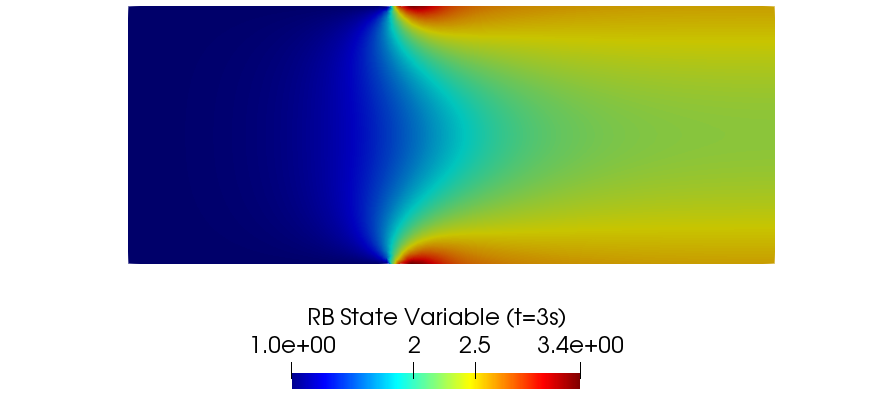}
\caption{}
\label{fig:on_y_3}
\end{subfigure}
\hfill
\begin{subfigure}[b]{0.3\textwidth}
\centering
\includegraphics[width=\textwidth]{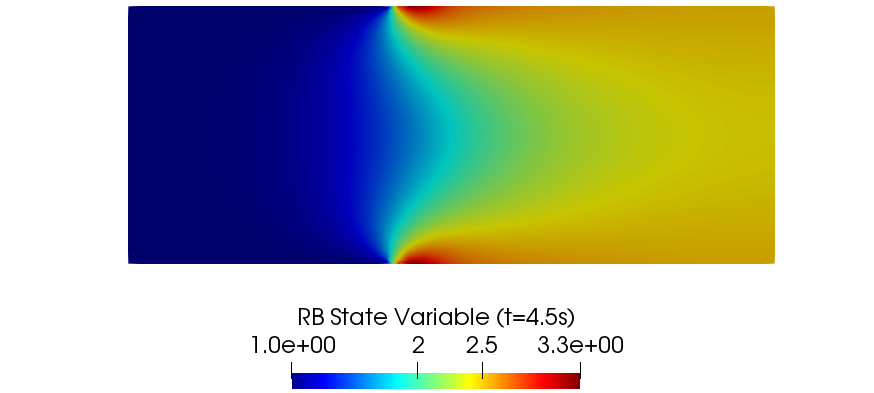}
\caption{}
\label{fig:on_y_5}
\end{subfigure}
\hfill
\begin{subfigure}[b]{0.3\textwidth}
\centering
\hspace{3mm}\includegraphics[width=\textwidth]{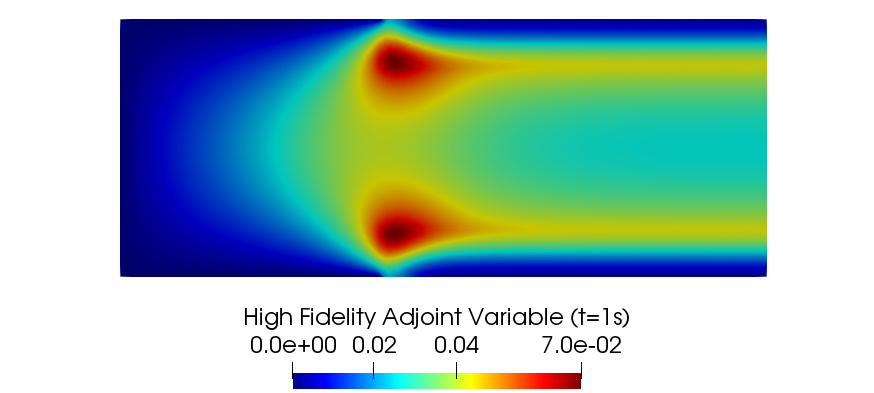}
\caption{}
\label{fig:off_p_1}
\end{subfigure}
\hfill
\begin{subfigure}[b]{0.3\textwidth}
\centering
\hspace{3mm}\includegraphics[width=\textwidth]{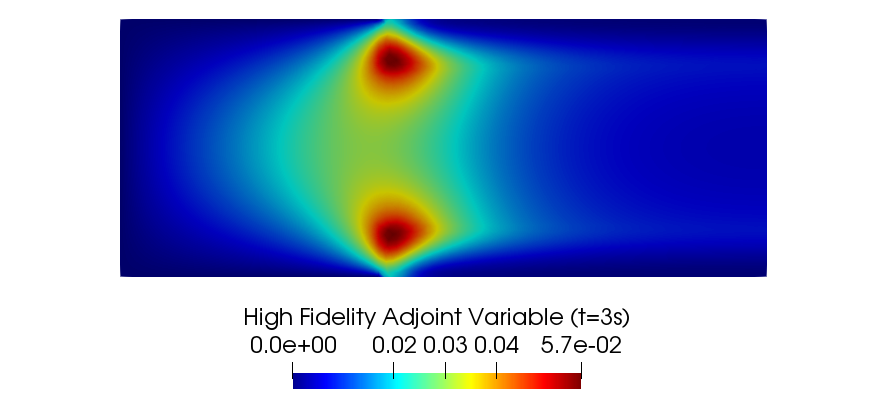}
\caption{}
\label{fig:off_p_3}
\end{subfigure}
\hfill
\begin{subfigure}[b]{0.3\textwidth}
\centering
\hspace{3mm}\includegraphics[width=\textwidth]{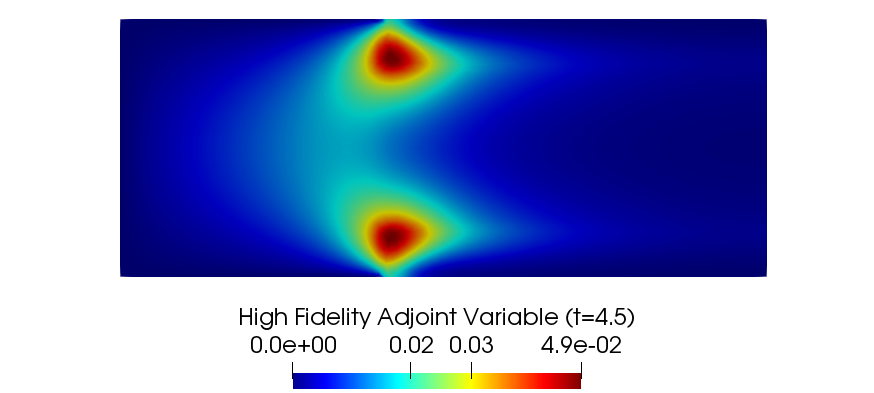}
\caption{}
\label{fig:off_p_3}
\end{subfigure}
\hfill
\begin{subfigure}[b]{0.3\textwidth}
\centering
\hspace{3mm}\includegraphics[width=\textwidth]{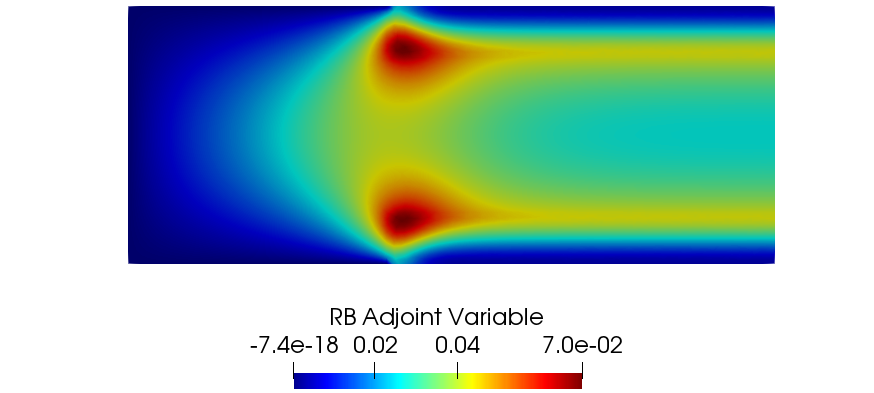}
\caption{}
\label{fig:on_p_1}
\end{subfigure}
\hfill
\begin{subfigure}[b]{0.3\textwidth}
\centering
\hspace{3mm}\includegraphics[width=\textwidth]{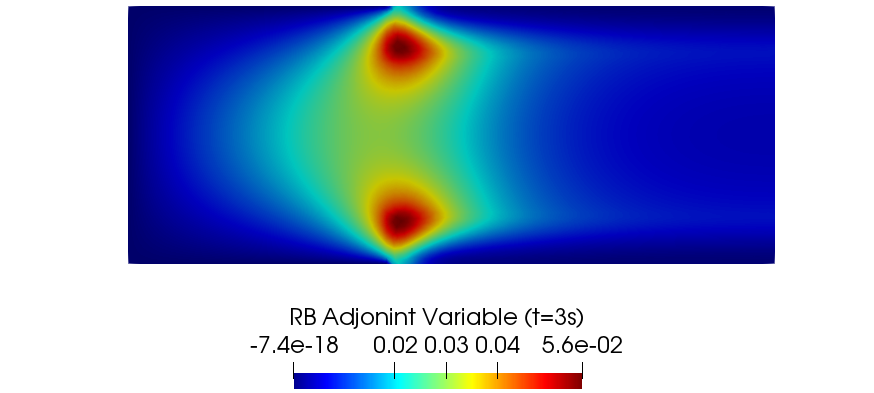}
\caption{}
\label{fig:on_p_3}
\end{subfigure}
\hfill
\begin{subfigure}[b]{0.3\textwidth}
\centering
\hspace{3mm}\includegraphics[width=\textwidth]{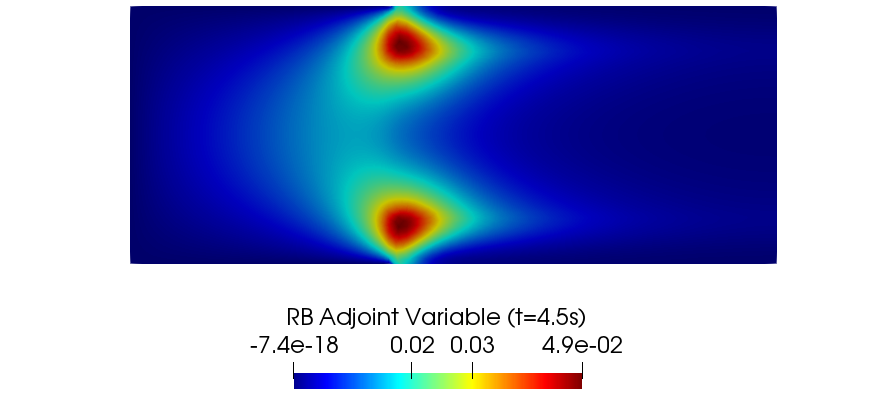}
\caption{}
\label{fig:on_p_5}
\end{subfigure}
\caption{Optimal high fidelity and reduced steady control variables with $\alpha = 0.07$ and $\bmu= (15.0, 1.5, 2.5)$. High fidelity state variable for $t = 1s, 3s, 4.5s$ in (a), (b), (c), respectively,  and reduced state variable for $t = 1s, 3s, 4.5s$ in (d), (e), (f). Analogously,  high fidelity adjoint variable in (f), (g), (h)  and reduced adjoint variable in (i), (j), (k), for $t = 1s, 3s, 4.5s.$}
\label{geo}
\end{figure}
\begin{figure}[H]
\centering
\begin{subfigure}[b]{0.45\textwidth}
\centering
\includegraphics[width=\textwidth]{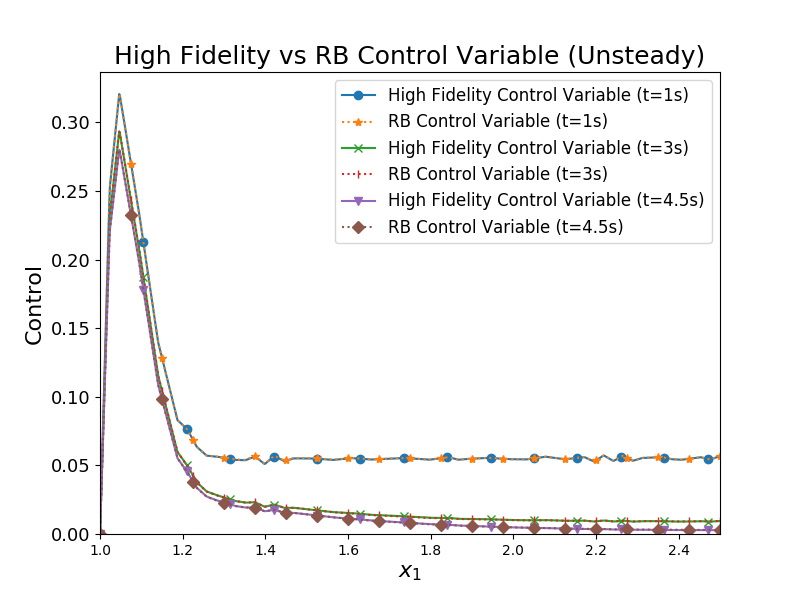}
\caption{}
\label{fig:u_1}
\end{subfigure}
\hfill
\begin{subfigure}[b]{0.45\textwidth}
\centering
\vspace{2mm}
\includegraphics[width=\textwidth]{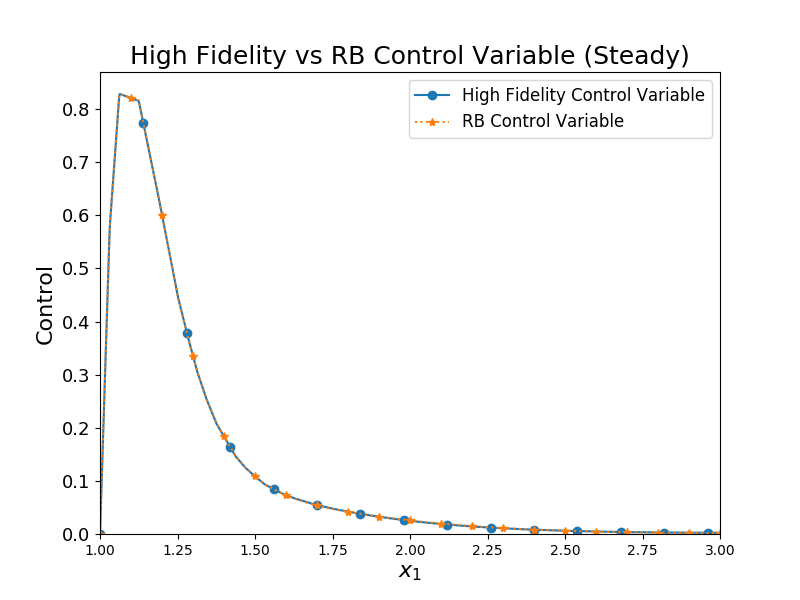}
\caption{}
\label{fig:u_3}
\end{subfigure}
\caption{Optimal high fidelity and reduced control variable for the unsteady and steady case with $\alpha = 0.07$. \textit{Unsteady}: the solutions are presented for $t=1s, 3s, 4.5s$ and $\bmu= (15.0, 1.5, 2.5)$ in (a). \textit{Staedy}: $\bmu= (12.0, 2, 2.5)$ in (b). In both cases the high fidelity and the reduced solutions coincide.}
\label{c_geo}
\end{figure}
Furthermore, the relatives error of the two approaches (first and fifth columns of Table \ref{tab:test_2_time}) are totally comparable. 
However, we can see how $\beta^{LB}(\bmu)$ suffers the approximation in Figure \ref{babu_vs_lb_2}, where the two constants are compared with respect to the value of $\mu_1$ for several values of $\alpha$, with $\mu_2 = 2$ fixed. In this case, the addition of the geometrical parametrization, influences the bound, which worsens not only for lower values of $\alpha$, but also for greater values of $\mu_2$, as we can observe from Figure \ref{babu_vs_lb_2_1}.

\begin{figure}[H]
\centering
\begin{subfigure}[b]{0.49\textwidth}
\centering
\includegraphics[width=\textwidth]{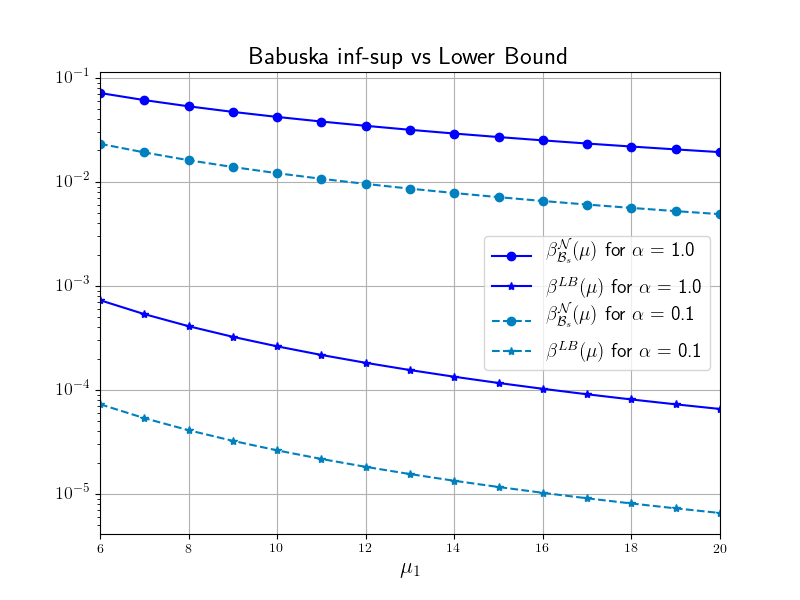}
\caption{}
\label{fig:babu_lb_1}
\end{subfigure}
\hfill
\begin{subfigure}[b]{0.49\textwidth}
\centering
\vspace{2mm}
\includegraphics[width=\textwidth]{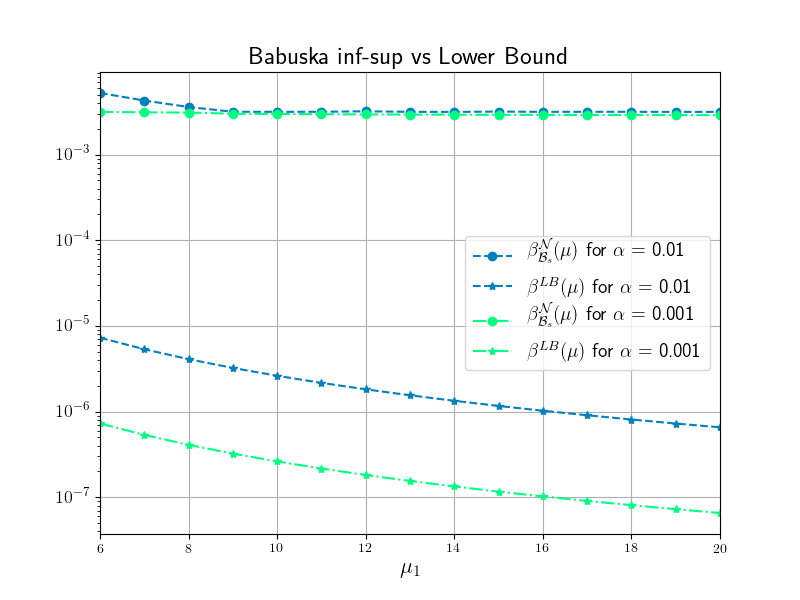}
\caption{}
\label{fig:babu_lb_2}
\end{subfigure}
\hfill
\caption{Comparison of the value of the lower bound $\beta^{LB}(\bmu)$ with respect to the exact Babu\v ska inf-sup constant $\beta_{\Cal B}^{\Cal N}(\bmu)$ for $\alpha = 1, 0.1$ and $\alpha = 0.01, 0.001$ in (a) and (b), respectively. The analysis has been performed varying the value of $\mu_1$ and fixing $\mu_2 = 2$.}
\label{babu_vs_lb_2}
\end{figure}
\begin{figure}[H]
\centering
\begin{subfigure}[b]{0.49\textwidth}
\centering
\includegraphics[width=\textwidth]{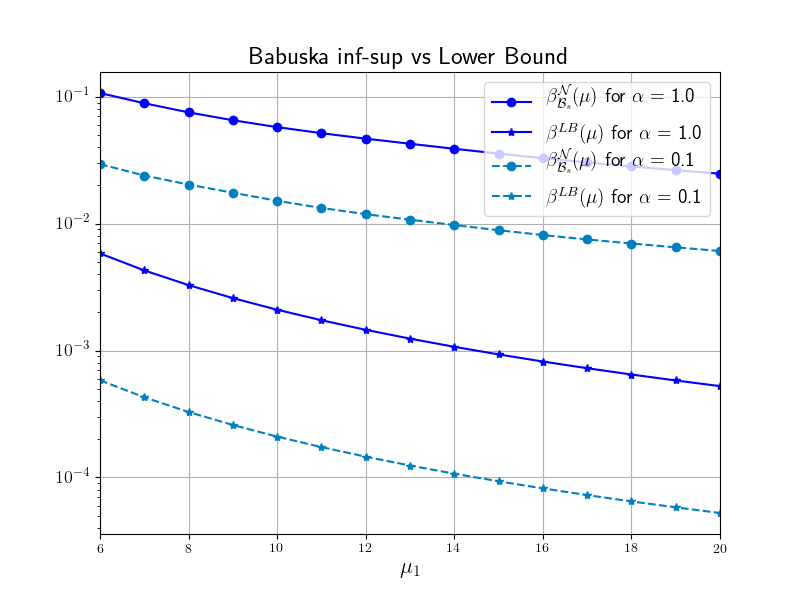}
\caption{}
\label{fig:babu_lb_1}
\end{subfigure}
\hfill
\begin{subfigure}[b]{0.49\textwidth}
\centering
\vspace{2mm}
\includegraphics[width=\textwidth]{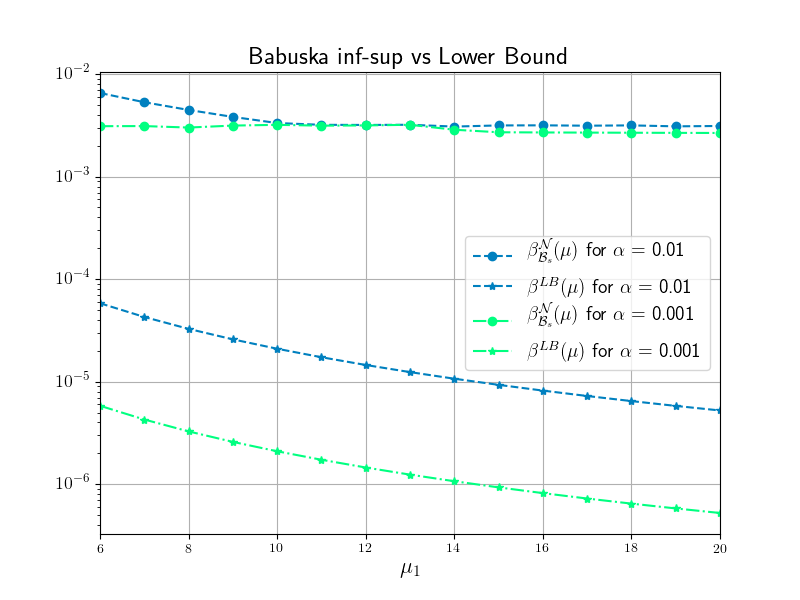}
\caption{}
\label{fig:babu_lb_2}
\end{subfigure}
\hfill
\caption{Comparison of the value of the lower bound $\beta^{LB}(\bmu)$ with respect to the exact Babu\v ska inf-sup constant $\beta_{\Cal B}^{\Cal N}(\bmu)$ for $\alpha = 1, 0.1$ and $\alpha = 0.01, 0.001$ in (a) and (b), respectively. The analysis has been performed varying the value of $\mu_1$ and fixing $\mu_2 = 1$.}
\label{babu_vs_lb_2_1}
\end{figure}

\subsubsection{Steady Case}
We briefly describe the performance of the lower bound $\beta^{LB}_s(\bmu)$ for the problem \eqref{test_2}. The problem with initial dimension of $\Cal N = N_h = 10366$ is reduced to $4N = 40$. In Figure \ref{s_geo} we present some representative solutions for state and adjoint variable (the control is recovered through \eqref{gradient_eq} and represented in Figure \ref{fig:u_3}) while an averaged performance analysis is considered in table Table \ref{tab:test_2_s}, where errors \eqref{eq:errors_s} are shown, together with an effectivity and error estimator behaviours\footnote{See footnote \ref{error_estimator}.}. Also in this case, in terms of effectivity, by definition, the Babu\v ska inf-sup constant gives better results, but once again it pays in the offline basis construction, since the computation of the exact value $\beta^{N_h}_{\Cal B_s}(\bmu)$ take, averagely, $0.8s$. As in the time dependent, the effectivity is linked to the value of the penalization parameter as well as the value of $\mu_2$. For the sake of brevity we do not show the plots of the two constants varying $\alpha$ and $\mu_1$, since they are similar to Figure \ref{babu_vs_lb_2} and Figure \ref{babu_vs_lb_2_1}. In terms of computational time needed for a reduced simulation, we reach a speed up of 135, averagely.
\begin{table}[H]
\caption{Steady case: performance analysis for the problem \eqref{test_2}. Avarage error, estimators and effectivities exploiting the lower bound $\beta_s^{LB}(\bmu)$ and the Babu\v ska inf-sup constant $\beta_s^{N_h}(\bmu)$, with respect to $N$.}
\label{tab:test_2_s}
\footnotesize{
\begin{tabular}{|c|c|c|c|c|c|c|c|c|}
\hline
\multirow{2}{*}{$N$} & \multicolumn{4}{c|}{$\beta^{LB}_s(\bmu)$}      & \multicolumn{4}{c|}{$\beta_{\Cal B_s}^{N_h}(\bmu)$}                         \\ \cline{2-9}
                     & $\norm{e}_{\text{rel}}$ &$\norm{e}_{\text{abs}}$ & $\Delta_N(\bmu)$    & $\eta$            & $\norm{e}_{\text{rel}}$ &$\norm{e}_{\text{abs}}$ & $\Delta_N(\bmu)$    & $\eta$   \\ \hline
2                    & 3.16e--1&3.16e--1   & 3.25e+3   & 1.01e+4 & 5.82e--1&5.57e--1 & 1.01e+4&3.72e+1\\ \hline
4                    & 5.69e--2&5.69e--2    & 4.53e+2  & 7.95e+3 & 6.67e--2& 4.93e--2& 7.95e+3&2.39e+1\\ \hline
6                    & 1.52e--2&1.52e--2  & 1.11e+2  & 7.32e+3 & 2.72e--2& 1.52e--2& 7.32e+3&3.22e+1\\ \hline
8                    &5.23e--3 &4.95e--3  & 3.40e+1  & 6.88e+3 & 7.33e--3& 4.54e--3& 6.88e+3&3.13e+1\\ \hline
10                   & 2.03e--3&1.89e--3  & 1.25e+1 & 6.78e+3 & 4.63e--3& 2.65e--3& 6.78e+3&3.78e+1\\ \hline
\end{tabular}
}
\end{table}
\begin{figure}[H]
\centering
\begin{subfigure}[b]{0.45\textwidth}
\centering
\includegraphics[width=\textwidth]{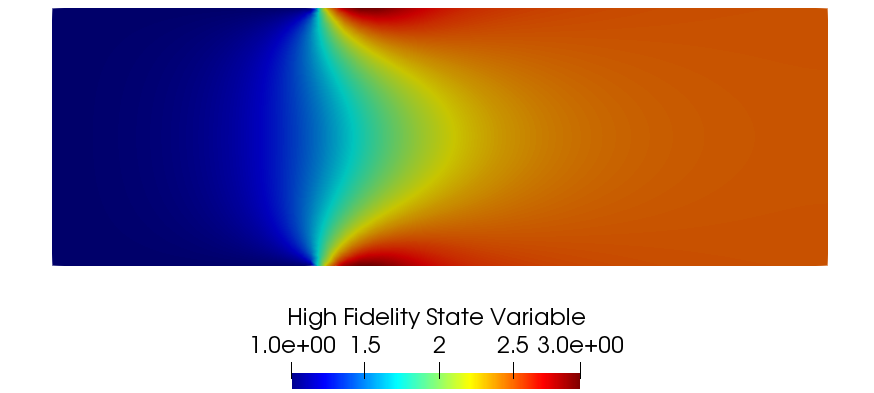}
\caption{}
\label{fig:off_y}
\end{subfigure}
\hfill
\begin{subfigure}[b]{0.45\textwidth}
\centering
\vspace{2mm}
\includegraphics[width=\textwidth]{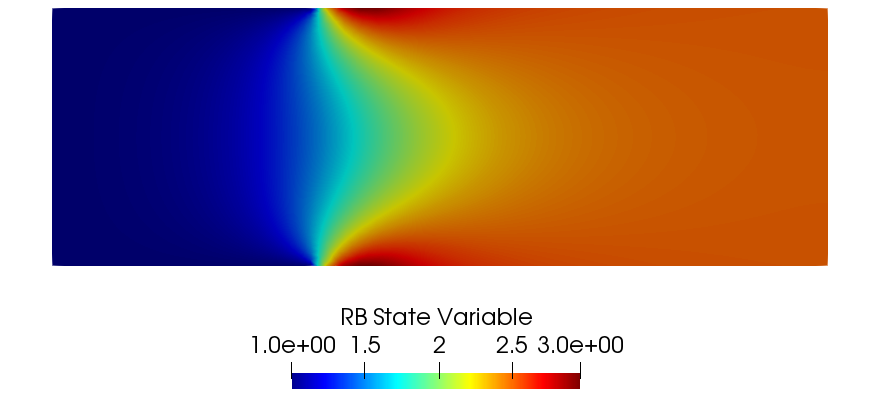}
\caption{}
\label{fig:on_y}
\end{subfigure}
\begin{subfigure}[b]{0.45\textwidth}
\centering
\includegraphics[width=\textwidth]{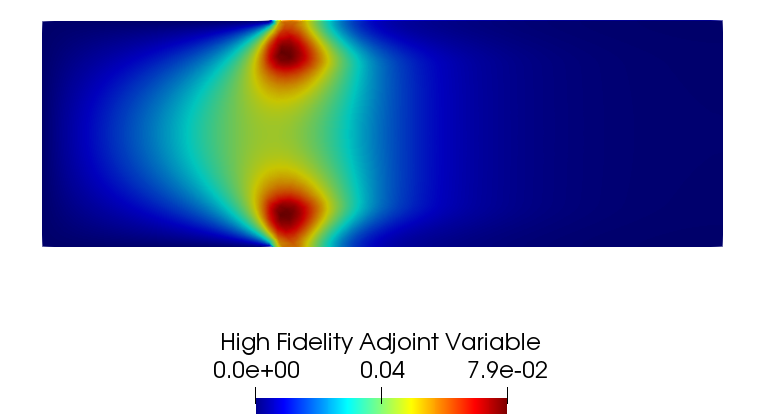}
\caption{}
\label{fig:off_p}
\end{subfigure}
\hfill
\begin{subfigure}[b]{0.45\textwidth}
\centering
\hspace{3mm}\includegraphics[width=\textwidth]{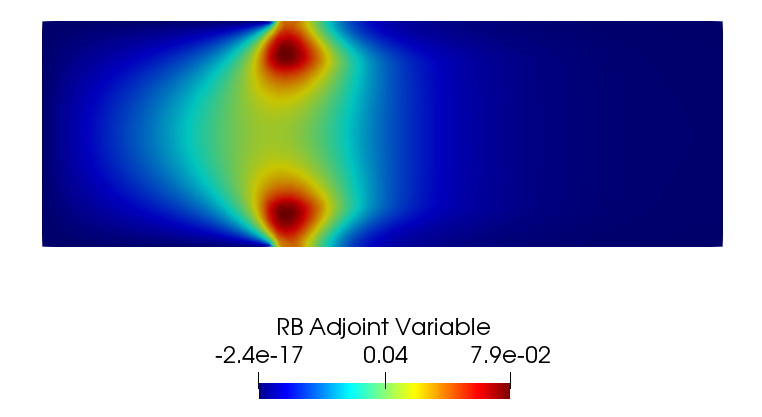}
\caption{}
\label{fig:on_p}
\end{subfigure}
\hfill
\caption{Optimal high fidelity and reduced solutions with $\alpha = 0.07$ and $\bmu=(12.0, 2.0, 2.5)$. High fidelity state variable in (a)  and reduced state variable in (b). Analogously, high fidelity adjoint variable in (c)  and reduced adjoint variable in (d).}
\label{s_geo}
\end{figure}

\begin{remark}[Other $\alpha$, other $\beta^{LB}(\bmu)$ ]
\label{remark_bounds}
We performed other tests in the geometrical parametritazion. First of all, we tried several values of $\alpha$ and the results proved what is represented in Figures \ref{babu_vs_lb_2} and \ref{babu_vs_lb_2_1}. Namely, the effectivity increases when $\alpha$ is smaller. We reached the value of $10^5$ with $\alpha = 0.01, 0.008$. \\
Furthermore, we tried to understand how the bound
\begin{equation}
\label{other}
\beta^{LB}(\bmu) = \beta_s^{LB}(\bmu) = \displaystyle \frac{\gamma_a (\bmu) }{\sqrt{2\max\left \{1,\left (\frac{c_m(\bmu)c_{\text{obs}}}{\alpha \gamma_a(\bmu)}\right )^2\right \}}},
\end{equation}
performs with respect to \eqref{quatities}. We recall that for this specific test case, we can use \eqref{other}, since the control is not distributed. We still need to specify $c_m(\bmu)$ and $c_{\text{obs}}$. It is easy to prove that  $c_m(\bmu) = C_{\Omega}\mu_2$. Then, both the constants can be approximated by an eigenvalue problem solved only once before the offline phase. Fixing $\alpha = 0.07$, bound \eqref{other} performs better than \eqref{quatities}, with lower effectivities both for steady and unsteady problem. All the results have been reported in Table \ref{tab:last}. The values of $\beta^{LB}(\bmu)$ must be compared to Table \ref{tab:test_2_time}. We gain an order of magnitude for $\eta$ with the new bound. The same happens for $\beta^{LB}_s(\bmu)$: indeed, compared with Table \ref{tab:test_2_s}, we see that the new effectivities remains around $2 \cdot 10^3$. For both the test case, the errors remain comparable. The better sharpness of \eqref{other}, can be observe also in Figures \ref{babu_vs_lb_2_1_obs}, where the new lower bound and the Babu\v ska inf-sup constant are depicted for $\alpha = 0.07$ and $\mu_2 = 1$ (to be compared with Figure \ref{babu_vs_lb_2_1}). Once again we reported only the time dependent case for the sake of brevity, since the steady case presents the same features.
\begin{table}[H]
\caption{Unsteady case: performance analysis for the problem \eqref{test_2_time} and \eqref{test_2} for $\alpha = 0.07$. Average error, estimators and effectivities exploiting the lower bounds $\beta^{LB}(\bmu)$ and  $\beta^{LB}_s(\bmu)$ given by \eqref{other}. (B.T.) Below tolerance $\tau$.
}
\label{tab:last}
\footnotesize{
\begin{tabular}{|c|c|c|c|c|c|c|c|c|}
\hline
\multirow{2}{*}{$N$} & \multicolumn{4}{c|}{$\beta^{LB}(\bmu)$}      & \multicolumn{4}{c|}{$\beta_s^{LB}(\bmu)$}                         \\ \cline{2-9}
                     & $\norm{e}_{\text{rel}}$ &$\norm{e}_{\text{abs}}$ & $\Delta_N(\bmu)$    & $\eta$            & $\norm{e}_{\text{rel}}$ &$\norm{e}_{\text{abs}}$ & $\Delta_N(\bmu)$    & $\eta$   \\ \hline
2                    & 4.23e--1&4.33e+0   & 3.689e+3   & 8.51e+2 &2.99e--1 &2.90e--1 & 6.53e+2&2.24e+3\\ \hline
4                    & 1.45e--1&1.50e+0    & 1.01e+3  & 6.70e+2 & 6.36e--2& 5.35e--2& 1.12e+2&2.10e+3\\ \hline
6                    & 5.24e--2&5.36e--1  & 2.67e+2  & 4.98e+2 & 3.01e--2& 2.04e--2& 3.99e+1&1.95e+3\\ \hline
8                    &2.80e--2 &2.94e--1  & 1.58e+2  & 5.40e+2 & 7.60e--3& 6.21e--3& 2.56e+1&2.01e+3\\ \hline
10                   & 1.38e--2&1.41e--1  & 6.83e+1 & 4.84e+2 & \multicolumn{4}{c|}{B.T.}\\ \hline
12                  & 9.49e--3&9.21e--2  & 4.32e+1 & 4.69e+2 & \multicolumn{4}{c|}{B.T.}                         \\ \hline
14                   & 5.12e--3&5.24e--2  & 3.01e+1 & 5.74e+2 &\multicolumn{4}{c|}{B.T.}\\ \hline
16                   & 3.78e--3&3.85e--2  & 2.12e+1 & 5.52e+2 & \multicolumn{4}{c|}{B.T.}\\ \hline
18                   & 2.57e--3&2.41e--2  & 1.52e+1 & 6.32e+2 & \multicolumn{4}{c|}{B.T.}\\ \hline
20                  & 1.25e--3&1.21e--2  & 8.42e+0 & 6.93e+2 & \multicolumn{4}{c|}{B.T.}\\ \hline
22                   & 9.07e--4&8.64e--3  & 6.30e+0 & 7.28e+2 &\multicolumn{4}{c|}{B.T.}\\ \hline
\end{tabular}
}
\end{table}
\end{remark}

\begin{figure}[H]
\centering
\begin{subfigure}[b]{0.45\textwidth}
\centering
\includegraphics[width=\textwidth]{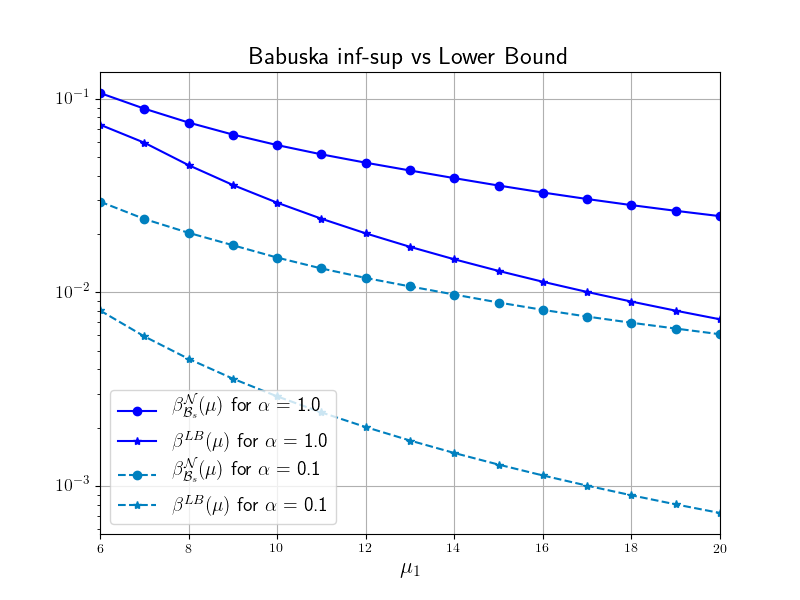}
\caption{}
\label{fig:babu_lb_1_obs}
\end{subfigure}
\hfill
\begin{subfigure}[b]{0.45\textwidth}
\centering
\vspace{2mm}
\includegraphics[width=\textwidth]{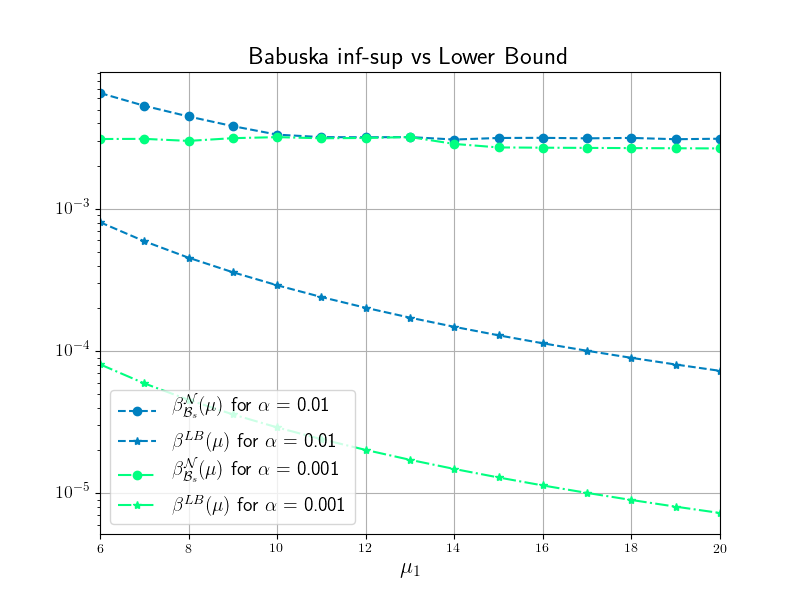}
\caption{}
\label{fig:babu_lb_2_obs}
\end{subfigure}
\hfill
\caption{Comparison of the value of the lower bound \eqref{other} with respect to the exact Babu\v ska inf-sup constant $\beta_{\Cal B}^{\Cal N}(\bmu)$ for $\alpha = 1, 0.1$ and $\alpha = 0.01, 0.001$ in (a) and (b), respectively. The analysis has been performed varying the value of $\mu_1$ and fixing $\mu_2 = 1$.}
\label{babu_vs_lb_2_1_obs}
\end{figure}

\section{Conclusions}
\label{conclusions}
In this work we presented a formulation for \ocp s governed by linear parabolic equations. We proposed a well-posedness analysis of the problem both at the continuous and at  the discrete space-time level. We underline that classical discretization techniques may be too costly in order to deal with the task of optimization for several parameters. Then, we relied on greedy algorithm, extending the known literature of steady case to unsteady ones. The main novelty is given by the proposed new error bound, made by quantities which are known a priori. The strength of the bound  is its great versatility, indeed it is valid for very general governing equations and control (from distributed to boundary ones). Furthermore, it is valid also for steady elliptic problems: at the best of our knowledge this is an improvement too, since the literature often relies on very expensive approximations algorithm \cite{negri2013reduced}. At the space-time level, the proposed greedy algorithm not only assures a rapid and reliable online phase, but lightens the offline phase of such expensive problem formulation, which, to the best of our knowledge, was performed though standard Proper Orthogonal Decomposition approach \cite{Strazzullo2}. The performance have been tested through a distributed \ocp $\;$ and a boundary \ocp, with physical and geometrical parametrization. We reach high speed up values, due to the high-dimensionality of the all-at-once space-time \ocp s. This is a first step towards the applicability of ROM for \ocp s in very real-time context, where time evolution optimization is required. \\
Finally, we conclude with some perspectives related to the proposed estimator. First of all, the bound could be sharpened in order to achieve better results in terms of effectivities. A natural and interesting step could be the extension to more complicated state equations, like Stokes equations. This will enlarge the applicability of the greedy algorithm in space-time formulation in several fields: biomedical, industrial, environmental... Indeed, the main goal is to provide a very general tool to be used in real time contexts for different applications with the purpose of planning and management action.

\section*{Acknowledgements}
We acknowledge the support by European Union Funding for Research and Innovation -- Horizon 2020 Program -- in the framework of European Research Council Executive Agency: Consolidator Grant H2020 ERC CoG 2015 AROMA-CFD project 681447 ``Advanced Reduced Order Methods with Applications in Computational Fluid Dynamics''. We also acknowledge the PRIN 2017  ``Numerical Analysis for Full and Reduced Order Methods for the efficient and accurate solution of complex systems governed by Partial Differential Equations'' (NA-FROM-PDEs) and the INDAM-GNCS project ``Tecniche Numeriche Avanzate per Applicazioni Industriali''.
The computations in this work have been performed with RBniCS \cite{rbnics} library, developed at SISSA mathLab, which is an implementation in FEniCS \cite{fenics} of several reduced order modelling techniques; we acknowledge developers and contributors to both libraries.

\bibliographystyle{plain}
\bibliography{BIB}

\begin{thebibliography}{10}

\bibitem{rbnics}
{RBniCS} - reduced order modelling in {FEniCS}.
\newblock https://www.rbnicsproject.org/, 2015.

\bibitem{Babuska1971}
I.~Babu{\v{s}}ka.
\newblock Error-bounds for finite element method.
\newblock {\em Numerische Mathematik}, 16(4):322--333, Jan 1971.

\bibitem{bader2016certified}
E.~Bader, M.~K{\"a}rcher, M.~A. Grepl, and K.~Veroy.
\newblock Certified reduced basis methods for parametrized distributed elliptic
  optimal control problems with control constraints.
\newblock {\em SIAM Journal on Scientific Computing}, 38(6):A3921--A3946, 2016.

\bibitem{bader2015certified}
E.~Bader, M.~K{\"a}rcher, M.~A. Grepl, and K.~Veroy-Grepl.
\newblock A certified reduced basis approach for parametrized linear-quadratic
  optimal control problems with control constraints.
\newblock {\em IFAC-PapersOnLine}, 48(1):719--720, 2015.

\bibitem{Ballarin2017}
F.~Ballarin, E.~Faggiano, A.~Manzoni, A.~Quarteroni, G.~Rozza, S.~Ippolito,
  C.~Antona, and R.~Scrofani.
\newblock Numerical modeling of hemodynamics scenarios of patient-specific
  coronary artery bypass grafts.
\newblock {\em Biomechanics and Modeling in Mechanobiology}, 16(4):1373--1399,
  Aug 2017.

\bibitem{boffi2013mixed}
D.~Boffi, F.~Brezzi, and M.~Fortin.
\newblock {\em Mixed finite element methods and applications}, volume~44.
\newblock Springer-Verlag, Berlin and Heidelberg, 2013.

\bibitem{buffa2012priori}
A.~Buffa, Y.~Maday, A.~T. Patera, C.~Prud’homme, and G.~Turinici.
\newblock A priori convergence of the greedy algorithm for the parametrized
  reduced basis method.
\newblock {\em ESAIM: Mathematical modelling and numerical analysis},
  46(3):595--603, 2012.

\bibitem{de2007optimal}
J.~C. de~los Reyes and F.~Tr{\"o}ltzsch.
\newblock Optimal control of the stationary {N}avier-{S}tokes equations with
  mixed control-state constraints.
\newblock {\em SIAM Journal on Control and Optimization}, 46(2):604--629, 2007.

\bibitem{dede2007optimal}
L.~Ded\`e.
\newblock Optimal flow control for {N}avier-{S}tokes equations: Drag
  minimization.
\newblock {\em International Journal for Numerical Methods in Fluids},
  55(4):347--366, 2007.

\bibitem{dede2010reduced}
L.~Ded{\`e}.
\newblock Reduced basis method and a posteriori error estimation for
  parametrized linear-quadratic optimal control problems.
\newblock {\em SIAM Journal on Scientific Computing}, 32(2):997--1019, 2010.

\bibitem{delfour2011shapes}
M.~C. Delfour and J.~Zol{\'e}sio.
\newblock {\em Shapes and geometries: metrics, analysis, differential calculus,
  and optimization}, volume~22.
\newblock SIAM, Philadelphia, 2011.

\bibitem{gerner2012certified}
A.~L. Gerner and K.~Veroy.
\newblock Certified reduced basis methods for parametrized saddle point
  problems.
\newblock {\em SIAM Journal on Scientific Computing}, 34(5):A2812--A2836, 2012.

\bibitem{Glas2017}
S.~Glas, A.~Mayerhofer, and K.~Urban.
\newblock {\em Two Ways to Treat Time in Reduced Basis Methods}, pages 1--16.
\newblock Springer International Publishing, Cham, 2017.

\bibitem{haasdonk2008reduced}
B.~Haasdonk and M.~Ohlberger.
\newblock Reduced basis method for finite volume approximations of parametrized
  linear evolution equations.
\newblock {\em ESAIM: Mathematical Modelling and Numerical
  Analysis-Mod{\'e}lisation Math{\'e}matique et Analyse Num{\'e}rique},
  42(2):277--302, 2008.

\bibitem{makinen}
J.~Haslinger and R.~A.~E. M{\"a}kinen.
\newblock {\em Introduction to shape optimization: theory, approximation, and
  computation}.
\newblock SIAM, Philadelphia, 2003.

\bibitem{hesthaven2015certified}
J.~S. Hesthaven, G.~Rozza, and B.~Stamm.
\newblock Certified reduced basis methods for parametrized partial differential
  equations.
\newblock {\em SpringerBriefs in Mathematics}, 2015, Springer, Milano.

\bibitem{HinzeStokes}
M.L. Hinze, M.~K\"oster, and S.~Turek.
\newblock A hierarchical space-time solver for distributed control of the
  {S}tokes equation.
\newblock {\em Technical Report, SPP1253-16-01}, 2008.

\bibitem{HinzeNS}
M.L. Hinze, M.~K{\"o}ster, and S.~Turek.
\newblock A space-time multigrid method for optimal flow control.
\newblock In {\em Constrained optimization and optimal control for partial
  differential equations}, pages 147--170. Springer, 2012.

\bibitem{hinze2008optimization}
M.L. Hinze, R.~Pinnau, M.~Ulbrich, and S.~Ulbrich.
\newblock {\em Optimization with {PDE} constraints}, volume~23.
\newblock Springer Science \& Business Media, Antwerp, 2008.

\bibitem{huynh2010natural}
DBP Huynh, DJ~Knezevic, Y~Chen, Jan~S Hesthaven, and AT~Patera.
\newblock A natural-norm successive constraint method for inf-sup lower bounds.
\newblock {\em Computer Methods in Applied Mechanics and Engineering},
  199(29-32):1963--1975, 2010.

\bibitem{Iapichino2}
L.~Iapichino, S.~Trenz, and S.~Volkwein.
\newblock Reduced-order multiobjective optimal control of semilinear parabolic
  problems.
\newblock In B{\"u}lent Karas{\"o}zen, Murat Manguo{\u{g}}lu, M{\"u}nevver
  Tezer-Sezgin, Serdar G{\"o}ktepe, and {\"O}m{\"u}r U{\u{g}}ur, editors, {\em
  Numerical Mathematics and Advanced Applications ENUMATH 2015}, pages
  389--397, Cham, 2016. Springer International Publishing.

\bibitem{Iapichino1}
L.~Iapichino, S.~Ulbrich, and S.~Volkwein.
\newblock Multiobjective pde-constrained optimization using the reduced-basis
  method.
\newblock {\em Adv. Comput. Math.}, 43(5):945–972, October 2017.

\bibitem{karcher2014certified}
M.~K{\"a}rcher and M.~A. Grepl.
\newblock A certified reduced basis method for parametrized elliptic optimal
  control problems.
\newblock {\em ESAIM: Control, Optimisation and Calculus of Variations},
  20(2):416--441, 2014.

\bibitem{karcher2018certified}
M.~K{\"a}rcher, Z.~Tokoutsi, M.~A. Grepl, and K.~Veroy.
\newblock Certified reduced basis methods for parametrized elliptic optimal
  control problems with distributed controls.
\newblock {\em Journal of Scientific Computing}, 75(1):276--307, 2018.

\bibitem{kunisch2008proper}
K.~Kunisch and S.~Volkwein.
\newblock Proper orthogonal decomposition for optimality systems.
\newblock {\em ESAIM: Mathematical Modelling and Numerical Analysis},
  42(1):1--23, 2008.

\bibitem{Langer2020}
U.~Langer, O.~Steinbach, F.~Tröltzsch, and H.~Yang.
\newblock Unstructured space-time finite element methods for optimal control of
  parabolic equations.
\newblock 04 2020.

\bibitem{LassilaManzoniQuarteroniRozza2013a}
T.~Lassila, A.~Manzoni, A.~Quarteroni, and G.~Rozza.
\newblock A reduced computational and geometrical framework for inverse
  problems in hemodynamics.
\newblock {\em International Journal for Numerical Methods in Biomedical
  Engineering}, 29(7):741--776, 2013.

\bibitem{leugering2014trends}
G.~Leugering, P.~Benner, S.~Engell, A.~Griewank, H.~Harbrecht, M.~Hinze,
  R.~Rannacher, and S.~Ulbrich.
\newblock {\em Trends in {PDE} constrained optimization}.
\newblock Springer, New York, 2014.

\bibitem{fenics}
A.~Logg, K.A. Mardal, and G.~Wells.
\newblock {\em Automated Solution of Differential Equations by the Finite
  Element Method}.
\newblock Springer-Verlag, Berlin, 2012.

\bibitem{mohammadi2010applied}
B.~Mohammadi and O.~Pironneau.
\newblock {\em Applied shape optimization for fluids}.
\newblock Oxford University Press, New York, 2010.

\bibitem{necas}
J.~Ne{\v{c}}as.
\newblock Les m{\'e}thodes directes en th{\'e}orie des {\'e}quations
  elliptiques.
\newblock 1967.

\bibitem{tesi}
F.~Negri.
\newblock Reduced basis method for parametrized optimal control problems
  governed by {PDE}s.
\newblock {\em Master thesis, Politecnico di Milano}, 2011.

\bibitem{negri2015reduced}
F.~Negri, A.~Manzoni, and G.~Rozza.
\newblock Reduced basis approximation of parametrized optimal flow control
  problems for the {S}tokes equations.
\newblock {\em Computers \& Mathematics with Applications}, 69(4):319--336,
  2015.

\bibitem{negri2013reduced}
F.~Negri, G.~Rozza, A.~Manzoni, and A.~Quarteroni.
\newblock Reduced basis method for parametrized elliptic optimal control
  problems.
\newblock {\em SIAM Journal on Scientific Computing}, 35(5):A2316--A2340, 2013.

\bibitem{optimal}
M.~Po{\v{s}}ta and T.~Roub{\'\i}{\v{c}}ek.
\newblock Optimal control of {N}avier--{S}tokes equations by {O}seen
  approximation.
\newblock {\em Computers \& Mathematics With Applications}, 53(3):569--581,
  2007.

\bibitem{prud2002reliable}
C.~Prud{'}Homme, D.~V. Rovas, K.~Veroy, L.~Machiels, Y.~Maday, A.~Patera, and
  G.~Turinici.
\newblock Reliable real-time solution of parametrized partial differential
  equations: Reduced-basis output bound methods.
\newblock {\em Journal of Fluids Engineering}, 124(1):70--80, 2002.

\bibitem{quarteroni2005numerical}
A.~Quarteroni, G.~Rozza, L.~Ded{\`e}, and A.~Quaini.
\newblock Numerical approximation of a control problem for advection-diffusion
  processes.
\newblock In {\em IFIP Conference on System Modeling and Optimization}, pages
  261--273, Ceragioli F., Dontchev A., Futura H., Marti K., Pandolfi L. (eds)
  System Modeling and Optimization. CSMO 2005. vol 199. Springer, Boston, 2005.

\bibitem{quarteroni2007reduced}
A.~Quarteroni, G.~Rozza, and A.~Quaini.
\newblock Reduced basis methods for optimal control of advection-diffusion
  problems.
\newblock In {\em Advances in Numerical Mathematics}, pages 193--216. RAS and
  University of Houston, Moscow, 2007.

\bibitem{quarteroni2008numerical}
A.~Quarteroni and A.~Valli.
\newblock {\em Numerical approximation of partial differential equations},
  volume~23.
\newblock Springer Science \& Business Media, Berlin and Heidelberg, 2008.

\bibitem{RozzaHuynhManzoni2013}
G.~Rozza, D.B.P. Huynh, and A.~Manzoni.
\newblock Reduced basis approximation and a posteriori error estimation for
  {S}tokes flows in parametrized geometries: Roles of the inf-sup stability
  constants.
\newblock {\em Numerische Mathematik}, 125(1):115--152, 2013.

\bibitem{RozzaHuynhPatera2008}
G.~Rozza, D.B.P. Huynh, and A.T. Patera.
\newblock Reduced basis approximation and a posteriori error estimation for
  affinely parametrized elliptic coercive partial differential equations:
  Application to transport and continuum mechanics.
\newblock {\em Archives of Computational Methods in Engineering},
  15(3):229--275, 2008.

\bibitem{seymen2014distributed}
Z.~K. Seymen, H.~Y{\"u}cel, and B.~Karas{\"o}zen.
\newblock Distributed optimal control of time-dependent
  diffusion--convection--reaction equations using space--time discretization.
\newblock {\em Journal of Computational and Applied Mathematics}, 261:146--157,
  2014.

\bibitem{Stoll1}
M.~Stoll and A.~Wathen.
\newblock All-at-once solution of time-dependent {PDE}-constrained optimization
  problems.
\newblock 2010.

\bibitem{Stoll}
M.~Stoll and A.~Wathen.
\newblock All-at-once solution of time-dependent {S}tokes control.
\newblock {\em J. Comput. Phys.}, 232(1):498--515, January 2013.

\bibitem{Strazzullo1}
M.~Strazzullo, F.~Ballarin, R.~Mosetti, and G.~Rozza.
\newblock Model reduction for parametrized optimal control problems in
  environmental marine sciences and engineering.
\newblock {\em SIAM Journal on Scientific Computing}, 40(4):B1055--B1079, 2018.

\bibitem{Strazzullo2}
M.~Strazzullo, F.~Ballarin, and G.~Rozza.
\newblock Pod--galerkin model order reduction for parametrized time dependent
  linear quadratic optimal control problems in saddle point formulation.
\newblock {\em Journal of Scientific Computing}, 83(3):55, 2020.

\bibitem{Strazzullo3}
M.~Strazzullo, F.~Ballarin, and G.~Rozza.
\newblock {POD}-{G}alerkin model order reduction for parametrized nonlinear
  time dependent optimal flow control: an application to {S}hallow {W}ater
  {E}quations.
\newblock Submitted, arXiv:2003.09695, 2020.

\bibitem{ZakiaMaria}
M.~Strazzullo, Z.~Zainib, F.~Ballarin, and G.~Rozza.
\newblock Reduced order methods for parametrized nonlinear and time dependent
  optimal flow control problems: towards applications in biomedical and
  environmental sciences.
\newblock {\em In ENUMATH2019 proceedings}, 2020.

\bibitem{urban2012new}
K.~Urban and A.~T. Patera.
\newblock A new error bound for reduced basis approximation of parabolic
  partial differential equations.
\newblock {\em Comptes Rendus Mathematique}, 350(3-4):203--207, 2012.

\bibitem{xu2003some}
Jinchao Xu and Ludmil Zikatanov.
\newblock Some observations on {B}abu{\v{s}}ka and {B}rezzi theories.
\newblock {\em Numerische Mathematik}, 94(1):195--202, 2003.

\bibitem{yano2014space}
M.~Yano.
\newblock A space-time {P}etrov--{G}alerkin certified reduced basis method:
  Application to the {B}oussinesq equations.
\newblock {\em SIAM Journal on Scientific Computing}, 36(1):A232--A266, 2014.

\bibitem{yano2014space1}
M.~Yano, A.~T. Patera, and K.~Urban.
\newblock A space-time hp-interpolation-based certified reduced basis method
  for {B}urgers' equation.
\newblock {\em Mathematical Models and Methods in Applied Sciences},
  24(09):1903--1935, 2014.

\bibitem{Zakia}
Z.~Zainib, F.~Ballarin, S.~Fremes, P.~Triverio, L.~Jiménez-Juan, and G.~Rozza.
\newblock Reduced order methods for parametric optimal flow control in coronary
  bypass grafts, towards patient-specific data assimilation.
\newblock {\em International Journal for Numerical Methods in Biomedical
  Engineering}, 2020.

\end{thebibliography}

\end{document}